\numberwithin{equation}{section}
\definecolor{textcol}{rgb}{0.37,0,0.57}
\renewcommand{\boxed}[1]{\text{\fboxsep=.2em\fbox{\m@th$\displaystyle#1$}}}
\newtheorem{thm}{Theorem}[section]
\newtheorem{cor}[thm]{Corollary}
\newtheorem{lemma}[thm]{Lemma}
\newtheorem{df}[thm]{Definition}
\newtheorem{proposition}[thm]{Proposition}
\newtheorem{thmalpha}{Theorem}
\newtheorem{rem}{Remark}
\newtheorem{assu}{Assumption}
\newcommand{\MassP}{\mathcal{P}_{\rm m}}
\newcommand{\const}{{\rm C}} 
\title[The largest fragment in a fragmentation process]{The largest fragment in self-similar fragmentation processes of positive index}
\author[Dyszewski]{Piotr Dyszewski}
\author[Johnston]{Samuel G. G. Johnston}
\author[Palau]{Sandra Palau}
\author[Prochno]{Joscha Prochno}
\subjclass{Primary: 60J27, 60J80, 60G55.}
\keywords{Fragmentation process, Self-similar, Spines, Branching Random Walk}
\begin{document}

\maketitle

\begin{abstract}
	We study a self-similar fragmentation process with dislocation measure $\nu$ 
	and self-similarity index $\alpha > 0$. 
	Let $e^{-m_t}$ denote the size of the largest fragment at time $t \geq 0$. 
	For dislocation measures satisfying a regularity condition of the form 
	$\nu(1 - s_1 > \delta) = \delta^{-\theta} \ell(1/\delta)$ with 
	$\theta \in [0,1)$ and slowly varying $\ell$, 
	we prove almost sure convergence
	\[
		\lim_{t \to \infty} (m_t - g(t)) = 0,
	\]
	where $g(t) = (\log t - (1 - \theta) \log \log t + f(t))/\alpha$, and 
	$f(t) = o(\log \log t)$ is a lower order correction that can be described explicitly in terms of $\ell$ and $\theta$. 
	Our results sharpen substantially the best prior result on general self-similar 
	fragmentation processes, due to Bertoin, which states that 
	$m_t = (1+o(1)) \log (t)/\alpha$. 
\end{abstract}

\section{Introduction}

    Fragmentation processes are stochastic models describing how objects break 
    into smaller parts over time. Examples arise in settings ranging from the 
    disintegration of solids under mechanical impact or stress~\cite{kooij2021explosive, ghabache2016frozen}, the breakup of aggregates in turbulent flows~\cite{babler2008modelling} 
    to catastrophic disruption in the solar system~\cite{ramesh2015review}, 
    tectonic plate motion~\cite{mallard2016subduction}, and balloon 
    popping~\cite{moulinet2015popping}.

    Early studies of fragmentation can be traced back to investigations of the statistical 
    properties of what is now known as a Poisson point process~\cite{lienau1936random}. 
    From a practical perspective, researchers were primarily interested in the 
    size distribution resulting from a single 
    fragmentation event. The fragment masses of bombs and shells were observed 
	to follow a stretched exponential 
	distribution~\cite{Mott2006}. 
	In contrast, the crushing of fused silica plates results in a size distribution that can 
	be modeled as a mixture of exponential distributions~\cite{grady1985geometric}.

    Fragmentation phenomena have a long history in both analysis and probability~\cite{fu, BD1, BD2, evans1998stationary, aldous1998standard}. 
    On the probabilistic side, already Kolmogorov \cite{kolmogorov} proposed 
    a stochastic model leading to the log-normal distribution of particle sizes 
    in fragmentation processes. 
    On the analytic side, the study of coagulation–fragmentation equations goes 
    back to the classical work of Smoluchowski \cite{Smoluchowski1916}, and was 
    developed in a rigorous PDE framework in the 1980s and 1990s, see for instance 
    Stewart \cite{Stewart1989} and Escobedo, Mischler and Perthame 
    \cite{EscobedoMischlerPerthame2003}. 
    Structural aspects of random partitions are related to Kingman's theory of 
    exchangeable partitions \cite{Kingman1978} and to continuum random trees 
    \cite{Aldous1991}. 
    The works of Bertoin \cite{bertoin1, bertoin2, bertoin3} and Berestycki \cite{Berestycki} 
    in the 2000s provided a systematic probabilistic framework for self-similar 
    fragmentation processes. Over the past two decades, it has been demonstrated that fragmentation processes are related to various topics in the mathematical literature, such as phylogenetic trees~\cite{haas2008continuum}, random planar maps~\cite{bertoin2018random,bertoin2018martingales}, and the additive coalescent~\cite{miermont2001ordered}.
    
	Formally, a fragmentation process is a stochastic process $(X(t))_{t \geq 0}$ 
	taking values in the state space
	\begin{align*}
		\MassP:=\left\{ \mathbf{s}=(s_1,s_2,\ldots) \,:\, 
		s_1 \geq s_2 \geq \ldots \geq 0, \, \sum_{i \geq 1} s_i \leq 1\right\};
	\end{align*}
    	this set shall be endowed with the topology of pointwise convergence.
	We think of $X(t) = (X_1(t), X_2(t),\ldots)$ as a list (in decreasing order) 
	of the random sizes of the fragments in the system at time 
	$t \geq 0$. In other words, $X_1(t)$ is the size of the 
	largest fragment, $X_2(t)$ is the size of the second largest fragment, and so on; 
	note that a portion of the initial mass may be lost during fragmentation, 
	corresponding to $\sum_{i \geq 1} X_i(t) < 1$.

	In developing a mathematically tractable theory of fragmentation processes,  
	we typically assume the process is Markovian, meaning that, roughly speaking,  
	each fragment evolves independently of the others in the system.  
	This is formalized in the so-called \textbf{finite-activity case},  
	where fragmentation processes are described using a rate function  
	$r \colon (0,1] \to [0,\infty)$ and a probability measure $\tilde{\nu}$ on $\MassP$,  
	as follows.
	We begin at time zero with a single fragment of size $1$.  
	This fragment has an exponential lifetime with rate $r(1)$, and then splits into  
	a (possibly infinite) collection of fragments with sizes $(s_1, s_2, \ldots)$,  
	distributed according to $\tilde{\nu}$.
	The process then evolves according to the following rule:  
	independently of all other fragments, any fragment of size $u > 0$  
	has an exponential lifetime with rate $r(u)$,  
	and upon death it splits into fragments of sizes $(us_1, us_2, \ldots)$,  
	where $(s_1, s_2, \ldots)$ is sampled independently from $\tilde{\nu}$.

\subsection{Self-similar fragmentation processes and the infinite-activity case}

	In the present article, we focus on \textbf{self-similar fragmentation processes},  
	which are characterised by the property that a fragment of size $u \in (0,1]$  
	breaks at the specific rate $r(u) = \lambda u^\alpha$, where $\alpha \in \mathbb{R}$ is the index of  
	self-similarity, and $\lambda > 0$ is a fixed constant.
	Self-similarity is a natural assumption in fragmentation models,  
	as the splitting dynamics depend on relative sizes,  
	making the fragment structure statistically identical across scales,  
	a behaviour observed in many physical systems.
	As an intuitive example, consider the fragmentation of onion pieces on a (stochastic) 
	cook’s chopping board.  
	One may argue that this is a fragmentation process with index $\alpha = 1/3$:  
	the rate at which an onion of mass $m$ is chopped is proportional to its diameter,  
	which scales as $m^{1/3}$.
	(Of course, any good chef is likely to chop several pieces at once,  
	while our formal stochastic models exclude simultaneous fragmentations.)
	Different values of the exponent $\alpha$ can model different physical systems:  
	the case $\alpha = 1$ typically arises in one-dimensional settings,  
	and the case $\alpha = 2/3$ has even been proposed as a 
	model for polymer degradation~\cite{BEE}.

	\begin{figure}
    	\includegraphics[width=0.49\textwidth]{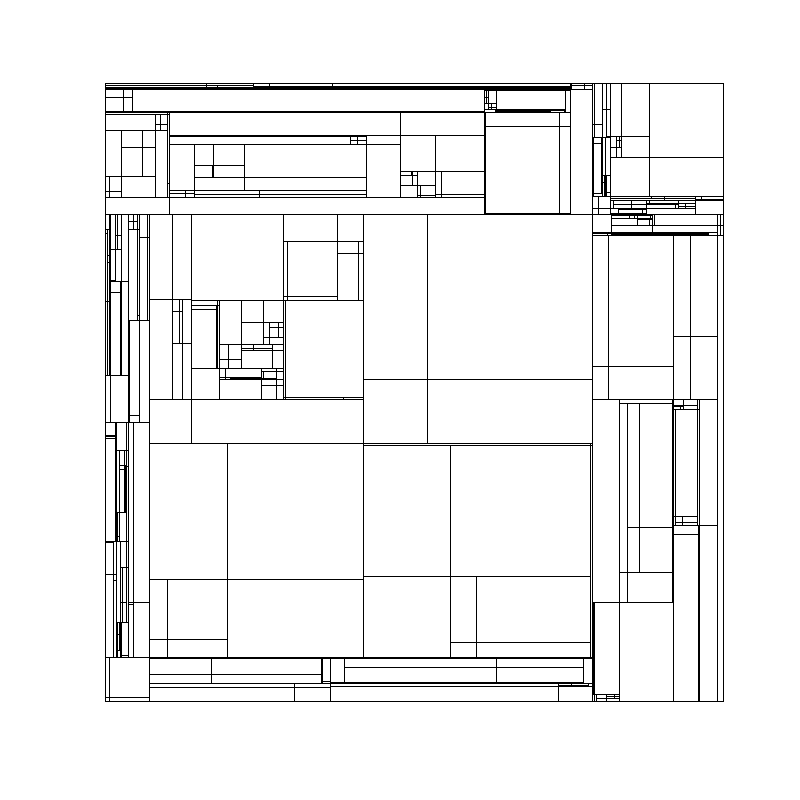}
    	\includegraphics[width=0.49\textwidth]{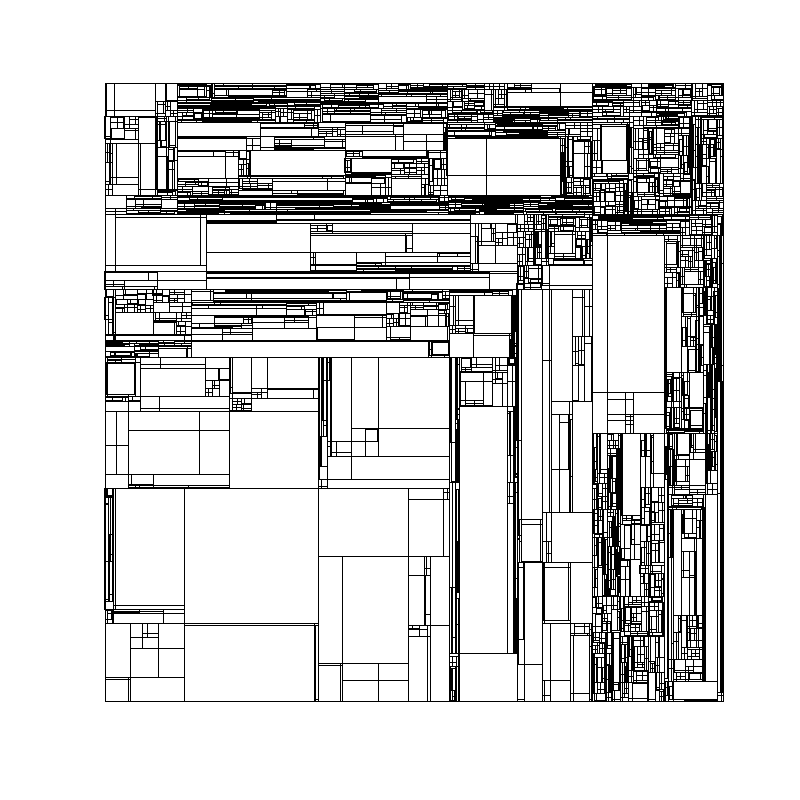}
    	\caption{Simulation of a fragmentation process of a unit square. 
    	Each rectangle in the system splits into four new rectangles in 
    	proportion $U$ of its width and height,
    	where $U$ is uniformly distributed on the unit interval.
    	The dislocation measure 
    	$\nu(\mathrm{d}\mathbf{s})$
    	is therefore given via 
    	$\int_{\MassP} f(\mathbf{s}) \nu(\mathrm{d}\mathbf{s}) = 
    	2\int^1_{1/2} f\left(x^2, x(1-x), (1-x)x, (1-x)^2 , 0, 0, \ldots\right) \mathrm{d}x
    	$ for $f \colon \MassP \to \mathbb{R}$. 
    	Both images show simulations of the process at time $t = 5$. 
    	On the left, $\alpha = 1/10$; on the right, $\alpha = -1/10$. 
    	For simulation purposes, we froze fragments whose area is smaller than $10^{-4}$.}
    	\label{fig:sym}
	\end{figure}
	
	When $\alpha > 0$, self-similarity implies that larger fragments 
	tend to break more quickly than smaller ones. 
	As a result, the size distribution of fragments becomes regularized over time 
	(see the first picture in Figure~\ref{fig:sym}). 
	This effect may be seen as serendipitous for anyone slicing onions and 
	wishing for uniformly small pieces: 
	larger onion fragments are more likely to be struck by the cook’s knife, 
	and thus more likely to break and 
	“catch up” in size with the smaller fragments.

    Of course, the $\alpha < 0$ case is characterised by the opposite behaviour: 
    smaller fragments break more quickly than larger ones 
    (see the second picture in Figure~\ref{fig:sym}). 
    Examples of such processes include ice floe breakup due to thermal stress 
    or droplet breakup in turbulent flow. 
    In this regime, the system exhibits loss of mass through the formation of dust, 
    a phenomenon already observed in early works on fragmentation 
    (see, e.g., \cite{Filippov1961,bertoin3}). 
    More precisely, after initially preserving a positive proportion of mass, 
    the entire system turns into dust in finite time. That is,
    \begin{equation*}
    \zeta = \inf\{ t \geq 0 : X_1(t) = 0 \}
    \end{equation*}
    is almost surely finite~\cite{bertoin3}, 
    where $X_1(t)$ denotes the size of the largest fragment at time $t$. 
    The asymptotic behaviour of $\zeta$~\cite{haas2023tail}, 
    as well as the structure of the fragmentation process near the extinction time~\cite{GH1,GH2}, 
    are now well understood.
    
	In the finite-activity case, we can package the rate $\lambda$ and the probability measure $\tilde{\nu}$ into a finite 
	measure  
    by setting $\nu := \lambda \tilde{\nu}$.
	The measure $\nu$ is called the \textit{dislocation measure}; 
	roughly speaking, the dislocation measure specifies 
	the rate at which blocks split. The dynamics are then 
	summarised by the following definition.

	\begin{df}
		In a self-similar fragmentation process with index $\alpha \in \mathbb{R}$ 
		and dislocation measure $\nu$, fragments behave independently of one another, 
		and at rate $u^\alpha \nu (\mathrm{d}\mathbf{s})$, each fragment of size $u$ 
		breaks into fragments of sizes $u\mathbf{s}:=(us_1,us_2,\ldots)$. 
	\end{df}

	One of the major technical achievements in the theory of fragmentation processes is
	the development of the \textbf{infinite-activity case}, 
	which allows for the \emph{continuous crumbling}
	of fragments over time. This is modeled by permitting the dislocation measure
	$\nu$ to be an infinite measure, although concentrated mainly on configurations
	$(s_1, s_2, \ldots)$ in $\MassP$ where $1 - s_1$ is small. As a result, events may occur
	at an arbitrarily high rate, but most involve only a tiny amount of mass being
	chipped off each fragment to form smaller ones.

	A key technical condition required to ensure the well-posedness of such processes is
	\begin{align}
		\label{eq:intcond}
		\int_{\MassP} (1 - s_1)\, \nu(\mathrm{d}\mathbf{s}) < \infty.
	\end{align}
	In other words, the expected instantaneous loss of mass from a given fragment must be finite,
	where we naturally associate the parent fragment with its largest child fragment after 
	a fragmentation event.
	For a detailed construction and further introduction to the topic, 
	we refer the reader to~\cite{bertoinbook}.

	\begin{figure}
    	\includegraphics[width=0.49\textwidth]{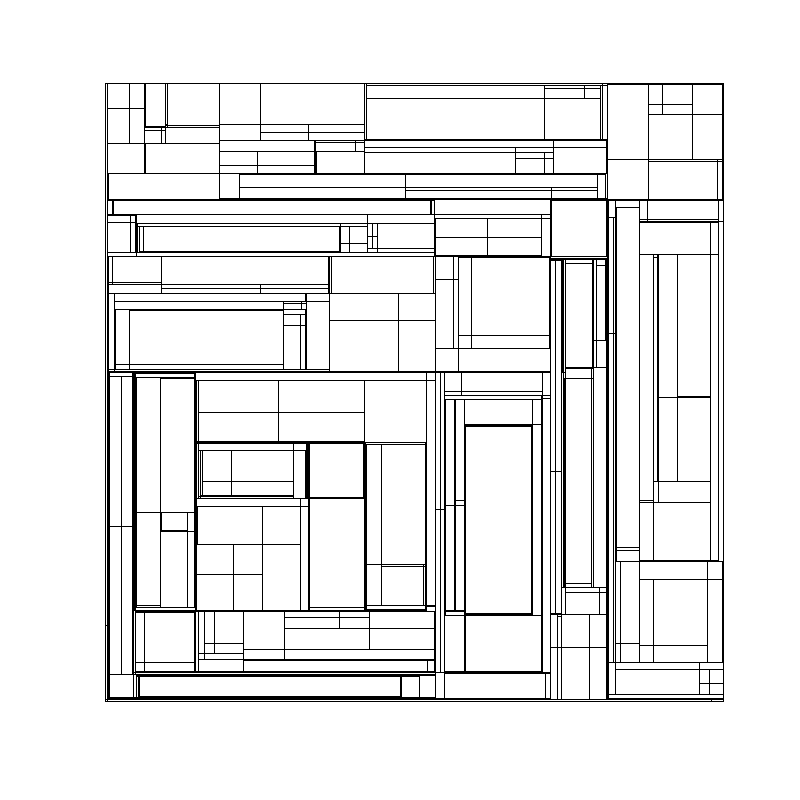}
    	\includegraphics[width=0.49\textwidth]{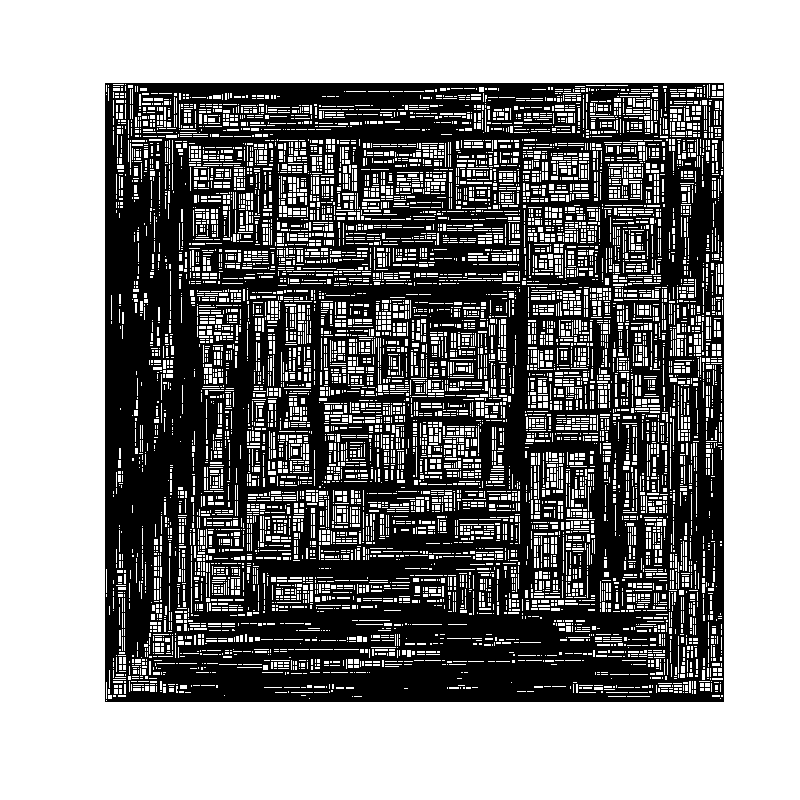}
    	\caption{Simulation of a fragmentation process of a unit square with 
    	infinite dislocation measure given via
    	$ \int_{\MassP} f(\mathbf{s}) \nu(\mathrm{d}\mathbf{s}) = \int_{1/2}^1 2/(x(1-x)) 
    	f\left(x^2,x(1-x),x(1-x),(1-x)^2, 0, \ldots \right) \mathrm{d}x$
    	at time $t=20$.
    	The approximation is made by truncating $\nu$ to the set $\{s_1<1-\delta\}$ 
    	and taking $\delta=10^{-15}$.
    	On the left $\alpha=1$ and on the right $\alpha=-1$.
    	Since the dislocation measure is infinite, the fragments lose very small bits of the fragments. 
    	These small bits are visible on 
    	the simulation as thick segments between larger fragments.
    	} 
    	\label{fig:sym2}
	\end{figure}

	Finally, through the article, in order to simplify the discussion, we will assume that the 
	process is \textbf{conservative}. In other words, the measure $\nu$ is supported on the 
	subset of $\MassP$ consisting of sequences $(s_1,s_2,\ldots)$ such that 
	$\sum_{i \geq 1}s_i  =1$. In the case where the index $\alpha $ of self-similarity is 
	positive, this property is sufficient to ensure that the sum of masses of all fragments 
	in the system is equal to $1$ at all times. We emphasise that this assumption is in force 
	for the remainder of the article. 
	To avoid trivialities, we also assume throughout that 
	$\nu$ is a nonzero measure that has no mass on $\{(1,0,\ldots)\}$. 

\subsection{The size of the largest fragment in self-similar fragmentation processes of positive index}

	Given a self-similar fragmentation process with positive index of self-similarity, 
	it is natural to enquire about the size of the largest fragment in the 
	system at large times $t\geq 0$, as it describes the speed at which the object that 
	undergoes the fragmentation breaks down. 

	The most general result on this front is due to Bertoin~\cite{bertoin3}, who
	studies self-similar fragmentation processes whose dislocation measures 
	satisfy the integrability condition
	\begin{align} \label{eq:intcond2}
    	\int_{\MassP} \sum_{i \geq 1} s_i \, |\log(s_i)| \, \nu(\mathrm{d}\mathbf{s}) < \infty,
    \end{align}
    in addition to \eqref{eq:intcond}.
    Bertoin shows that under \eqref{eq:intcond} and \eqref{eq:intcond2},  
    if $X_1(t) = e^{ - m_t }$ is the size of the largest fragment 
	in the system at time $t \geq 0$, then
    \begin{align} \label{eq:bertoin}
        \lim_{t \to \infty} \frac{m_t}{\log t} = \frac{1}{\alpha} \quad \text{almost surely}.
	\end{align}
	In the recent work \cite{DGJPS} (whose authors overlap those of the present article), 
	it is shown that it is possible to obtain a significant sharpening of \eqref{eq:intcond} in 
	a certain special case. Namely, the authors of \cite{DGJPS} consider the simplest 
	possible fragmentation mechanism, in which each fragment of size $m \in (0,1]$ 
	splits into $k$ equally sized pieces (of size $m/k$) at rate $m^\alpha$.
	In other words, this corresponds to the case where
	\begin{align} \label{eq:kfrag}
    		\nu := \delta_{(1/k, \ldots, 1/k, 0, \ldots)} \quad \text{(with $k$ copies of $1/k$)},
	\end{align}
	where $\delta_x$ refers to the Dirac mass at $x$.

	Note that, starting with a single initial fragment of size $1$, 
	every fragment in the system at time $t$ has size $k^{-n}$ for some nonnegative integer $n$.
	It is shown in~\cite{DGJPS} that if $k^{-\tilde{m}_t}$ is the size of the largest fragment 
	in the system at time $t \geq 0$, then for ‘most’ large times $t$, we have
	\begin{align} \label{eq:conc}
    		\tilde{m}_t = \left\lceil 
		\kappa \left( \log t - \log \log t + \log(\alpha) \right) \right\rceil
    		\quad \text{with } \kappa = 1/(\alpha \log k),
	\end{align}
	with high probability as $t \to \infty$.
	By ‘most’ large times $t$, we mean those $t$ for which the quantity inside the ceiling 
	function is not too close to an integer; see~\cite[Theorem A]{DGJPS} for a precise statement.
	The authors also prove a statement similar to~\eqref{eq:conc} for the behaviour of 
	the exponent $\tilde{M}_t$ 
	governing the size $k^{-\tilde{M}_t}$ of the \emph{smallest} fragment at large times.

	The main tool used in \cite{DGJPS} is to relate the process to an expanding 
	branching random walk 
	by considering the genealogy of the fragment process.
	Namely, for each $n$, one may consider the $k^{n}$ different times at which a fragment of size 
	$k^{-n}$ fragmented into $k$ fragments of size 
	$k^{-(n+1)}$. 
	One can think of each of these $k^n$ times as the position of a particle in 
	generation $n$, and thereby relate the fragmentation process to a time-inhomogeneous 
	discrete-time branching random walk, where the jumps become exponentially 
	larger as the generations progress. 
	The exponential growth of the jumps accounts for the concentration of \eqref{eq:conc}. 
	Analogous ideas are used in the present paper, though the techniques used here are
	substantially more involved, 
	with infinite-activity L\'evy processes playing the role of the random walk.

\subsection{Main result}

	In the article at hand, we prove fine estimates for the size $X_1(t)=e^{-m_t}$ of the 
	largest fragment in the system at large times $t$ in a broad class of self-similar 
	fragmentation processes with conservative dislocation measures. These estimates take the form
	\begin{align*}
		m_t = \frac{1}{\alpha}\left[\log t - (1 - \theta) \log \log t \right] + O(\log \log t), 
		\qquad \text{almost surely for large $t$},
	\end{align*}
	where the $O(\log \log t)$ terms are explicit. Here $\theta \geq 0$ is a parameter 
	that will describe how `erosive' the dislocation measure is. 
	More precisely, we consider the broad class of dislocation measures 
	satisfying the mild regularity condition that
	the dislocation measure 
	takes the form
	\begin{equation}\label{eq:1:crumblingpre}
		\nu( \{\mathbf{s}\in \MassP: 1- s_1 > \delta \} ) = \delta^{ - \theta} \ell(1/\delta),
	\end{equation}
	for some $\theta \geq 0$ and $\ell \colon [1,\infty) \to [0, +\infty)$ 
	slowly varying at infinity;
	recall that the latter means that for any positive $\lambda$, 
	$\ell(\lambda x)/\ell(x) \to 1$ as $x \to \infty$. 
	We call $\theta$ in \eqref{eq:1:crumblingpre} the \textbf{crumbling index}.
	Note that for $\nu$ satisfying \eqref{eq:1:crumblingpre}, $\theta$ is the infimum over
	$\phi \geq 0$ such that the strong version
	\begin{align*}
    		\int_{\MassP} (1 - s_1)^\phi \, \nu(\mathrm{d}\mathbf{s}) < \infty
   	\end{align*}
    	of \eqref{eq:intcond} holds. Consequently, in order for \eqref{eq:1:crumblingpre} 
	to hold, we must have $\theta \in [0,1]$.
	Note that~\eqref{eq:1:crumblingpre} is a natural condition controlling 
    	the strength of the singularity of 
	$\nu$ at $\mathbf{s} = (1, 0, \ldots)$ (should it have one).
	It appeared previously in the literature, for example in~\cite{haas2008continuum}, 
	where it was used to identify fragmentation trees as limits of discrete
 	fragmentation trees associated with a homogeneous fragmentation process,
	or in~\cite{haas2025ancestral} to describe the limiting 
	behaviour of the ancestor-counting process.

	This framework encompasses all finite-activity fragmentation processes:
	\begin{align*}
		\lambda = \nu(\MassP) < \infty \iff 
		\text{\eqref{eq:1:crumblingpre} holds with $\theta = 0$ and $\ell(x) \uparrow \lambda<\infty$ as $x \uparrow \infty$}.
	\end{align*}
	We emphasise that  
	$\theta = 0$ does not imply that the process has finite activity. 
	For instance, we could have $\theta =0$ and $\ell(x) = \log(1+x)$. 
    It is also possible to have $\theta = 1$, though we rule this edge case out.

    After assuming the mild regularity condition in \eqref{eq:1:crumblingpre}, we make two further assumptions: one is a technical condition that is only relevant in the $\theta = 0$ case with infinite activity, and the other is a non-lattice condition.
    Wrapping our assumptions together, for the remainder of the article we will operate under the following conditions:
    
   \begin{assu} \label{assu:1}
    We assume that the dislocation measure $\nu$ satisfies the following three conditions:
    \begin{enumerate}
        \item The singularity of the dislocation measure $\nu$ satisfies
            \begin{equation}\label{eq:1:crumbling}
        		\nu( \{\mathbf{s}\in \MassP: 1- s_1 > \delta \} ) = \delta^{ - \theta} \ell(1/\delta),
        	\end{equation}
            for some $\theta \in [0,1)$ and $\ell$ slowly varying at infinity. 
        \item If \eqref{eq:1:crumbling} holds with $\theta = 0$ and the process has infinite activity 
            (which implies $\ell(y) \to \infty$ as $y \to \infty$), we make the additional assumption 
            that there exists a second slowly varying function $\ell_0$ such that 
            \begin{equation}\label{eq:1:fortheta0}
              \ell(x) \sim \int_1^x \frac{\ell_0(y)}{y} \, \mathrm{d}y \qquad x \to \infty.
        	\end{equation}
        \item The process is non-lattice, in that:
        	\begin{equation} \label{eq:nonlattice}
            		\begin{array}{c}\text{There does not exist } a > 0 \text{ such that } \\ \nu \text{ is concentrated on } 
        		\left\{ \mathbf{s} \in \MassP : -\log(s_i) \in a\mathbb{Z} \text{ for all } i \geq 1 \right\}.\end{array}
        	\end{equation}
    \end{enumerate}
    \end{assu}

    The non-lattice condition in \eqref{eq:nonlattice} is automatically satisfied whenever $\nu$ is an infinite measure. 
    The example in~\eqref{eq:kfrag} is, of course, lattice. With that said, we are now ready to state our main result.

    \begin{thmalpha} \label{thm:ia}
    	Let $(X(t))_{t\geq 0}$ be a self-similar fragmentation process with index $\alpha >0$ and with 
    	dislocation measure $\nu$ satisfying Assumption \ref{assu:1} with corresponding $\theta\in[0,1)$ and slowly varying $\ell$. 
    	Let $e^{ -m_t}$ be the size of the largest fragment in the system at time $t$. Then there is a deterministic function of the form
    	\begin{align} \label{eq:ia}
    	   g(t) := \frac{1}{\alpha} 
    		\big[  \log t - (1-\theta) \log \log t 
    		+ h(t) \big],
    	\end{align}
        with an explicit correct $h(t)$ of the order $ o(\log \log t)$,
        such that
    	\begin{align*}
    		\lim_{t \to \infty}(m_t-g(t)) = 0
    	\end{align*}
    	almost surely.
	\end{thmalpha}

    In fact, we can characterise the correction $h(t)$ explicitly:
    \begin{enumerate}
        \item In either the finite-activity case or the infinite-activity case with $\theta \in (0,1)$, $h(t)$ is given by 
        \begin{align*}
            h(t) = \log \ell( \log t) + \log \alpha + \log \Gamma(1-\theta) + (1-\theta)\log(1-\theta),    
        \end{align*}
        where $\ell$ and $\theta$ are as in \eqref{eq:1:crumbling}. 

        \item In the case where $\theta = 0$ but we have infinite activity (i.e., in which Assumption \ref{assu:1} part 2 is relevant), the form of $h$ is slightly more complicated, and can be described implicitly in terms of $\ell$ (as in \eqref{eq:1:crumbling}) and $\ell_0$ (as in \eqref{eq:1:fortheta0}):
        \begin{align*}
            h(t) = \log \alpha - \log G(\log t),
        \end{align*}
        where $G$ is defined sequentially through other slowly varying functions $L$ and $J$ by 
        \begin{align} \label{eq:complicated0}
            1 \sim G(h)L(hG(h)), \text{ where } L(h) := \ell(hJ(h)), \text{ and where } J(h)^{-1}\ell_0(hJ(h)) \sim 1.
        \end{align}
    \end{enumerate}

	We emphasise that our main result holds under the crumbling 
	condition \eqref{eq:1:crumbling} and the non-lattice 
	condition \eqref{eq:nonlattice}, regardless of whether \eqref{eq:intcond2} is or is not satisfied. 

	Let us pick out the special case of our main result corresponding to finite activity with rate $\lambda$.
	Setting $\theta = 0$ and $\lim_{x \to \infty} \ell(x) = \lambda$, 
	the deterministic function $g$ occurring in \eqref{eq:ia} simply reads
	\begin{align} \label{eq:fa}
		g(t) =  \frac{1}{\alpha} \big[ \log t - \log \log t + \log \alpha + \log \lambda + o(1) \big], 
		\qquad t\geq 0.
	\end{align}
	Setting $\lambda = 1$, we see that the previous function coincides, up to a ceiling function, 
	with \eqref{eq:conc} (which is a lattice case, and therefore does not satisfy the non-lattice condition \eqref{eq:nonlattice}).
	We believe it would not be difficult to adapt our methods to handle the lattice case.

 

\subsection{Overview}

	The remainder of the article is structured as follows.

    In Section~\ref{sec:further}, we discuss further the technical framework we work under (comparing in particular the conditions  \eqref{eq:intcond2} and \eqref{eq:1:crumbling}), introduce the notion of spines for fragmentation processes, and give an overview of our proof.
    
	In Section~\ref{sec:CMJ}, we review the basics of Crump–Mode–Jagers branching processes and their limit theory. 
	We then discuss a natural projection of finite-activity fragmentation processes onto Crump–Mode–Jagers branching processes, 
	and thereafter show that even in the infinite-activity case, one can obtain a Crump–Mode–Jagers branching process 
	by viewing an infinite-activity self-similar fragmentation process at certain discrete stochastic times.

	In Section~\ref{sec:levy}, we review the spine construction of a tagged fragment in a self-similar fragmentation 
	process due to Bertoin~\cite{bertoin1}. 
	Thereafter, we use tools from Jain and Pruitt~\cite{JP} to study the rare events where a 
	spine particle does not lose too much size over a large time interval.

	In Section~\ref{sec:upper}, we use a many-to-one type formula to prove an upper bound for 
	the expected number of fragments of size greater than $e^{-h}$ at time $t$, 
	and, for a given $h$, study the asymptotics of the times $t(h)$ at which this expectation becomes $o(1)$.

	In Section~\ref{sec:lower}, we use our comparison with a Crump–Mode–Jagers branching process, 
	together with a delicate spine argument, 
	to show that with high probability there exist fragments of reasonably large size at certain times.

	In the final Section~\ref{sec:proof}, we tie our work together to prove our main result, Theorem~\ref{thm:ia}.

\section{Further discussion and proof ideas} \label{sec:further}

\subsection{Discussion and comparison of technical conditions}

\subsubsection{The technical condition in the $\theta=0$ case}

    In the second part of Assumption \ref{assu:1}, we assumed in the $\theta =0$ case that the slowly varying function $\ell(x)$ takes the form $\ell(x) \sim \int_1^x \ell_0(y)/y \mathrm{d}y$ for large $x$. Briefly, this is because in our proofs for certain tail bounds for L\'evy processes developed in Section \ref{sec:levy}, we need to show that the derivative of the function $f(x) = x^\theta \ell(x)$, with $\ell(\cdot)$ slowly varying (cf.\ \eqref{eq:1:crumbling}), takes the form $f'(x) = x^{\theta-1} \tilde{\ell}(x)$ for some slowly varying function $\tilde{\ell}(x)$. If $f$ is monotone, this is automatically the case when $\theta \in (0,1)$ (and in fact in this case $\tilde{\ell}(x)$ is asymptotically equivalent to $\theta\ell(x)$), but need not hold in the case $\theta = 0$. Indeed, consider that the function  	
    \[
        \ell(x) = \exp\left\{ \int_e^x \frac{\sin(y)}{y \log(y)} \, \mathrm{d}y \right\}.
	\]
    is slowly varying, but its derivative does not take the form $x^{-1}\tilde{\ell}(x)$ for a slowly varying $\tilde{\ell}(x)$. To circumvent these possible pathologies, in the special case where $\theta = 0$ but the dislocation measure is infinite, 
	we will impose the additional condition that $\ell(x) \sim \int_1^x \ell_0(y)/y ~\mathrm{d}y$ for a second slowly varying function $\ell_0$, i.e.\ the second part of Assumption \ref{assu:1}. This assumption is not particularly restrictive. Indeed, we note that \color{black} by the smooth variation theorem~\cite[Theorem 1.3.3]{bingham1989regular}, any slowly varying function is asymptotically equivalent to a smooth function.   

\subsubsection{The edge case $\theta = 1$}

    Let us touch on an edge case that we do not consider in the present article. Namely, it is possible for 
	\eqref{eq:1:crumblingpre} to hold with $\theta = 1$, 
	provided $\ell$ satisfies the integrability condition $\int_0^1 \ell(1/t)/t~\mathrm{d}t < \infty$. (One such example might be $\ell(x) = 1/(1 - \log(x))^2$.) 
	Since this is an edge case whose treatment would demand substantial adaptations of our tools, 
	for the sake of concision we rule this special case out of our consideration.

\subsubsection{The relationship between \eqref{eq:intcond2} and \eqref{eq:1:crumbling}}

    As noted above, \eqref{eq:1:crumbling} implies \eqref{eq:intcond}. 
	We now turn to discussing the relationship between \eqref{eq:intcond2} and \eqref{eq:1:crumbling}.
	Since~\eqref{eq:1:crumbling} is a local condition (i.e.\ depends only on the distribution
	of the largest fragment $s_1$ of $\mathbf{s}$ sampled according to $\nu$) and
	\eqref{eq:intcond2} is a global one (it depends on the whole sequence $\mathbf{s}$), it
	transpires that for any $\theta \in (0,1)$, there exist examples of dislocation measures with 
	crumbling index $\theta$ for which \eqref{eq:intcond2} does hold, and examples for which \eqref{eq:intcond2} does not hold:

    \begin{rem}[Under \eqref{eq:1:crumbling}, \eqref{eq:intcond2} may or may not hold]
    	First let $\theta \in (0,1)$. Let $f_{\theta}(t) = t^{-(1+\theta)} \mathrm{1}_{\{t \leq 1/2\}}$, 	
    	and consider a dislocation measure associated with the following dynamics: at rate $f_{\theta}(t)\mathrm{d}t$, 
    	we break into two pieces of sizes $(1-t,t)$, with $1-t$ being the bigger piece. 
    	Then, the crumbling exponent of this dislocation measure is clearly $\theta$,  since
    	\begin{align*}
        		\nu( \{ \mathbf{s}\in \MassP: 1 - s_1 > \delta \} ) = 
    		\int_{0}^{1}  t^{-(1+\theta)}\mathrm{1}_{\{\delta<t \leq 1/2\}} \mathrm{d}t = 
    		\theta^{-1} (\delta^{ - \theta} - 2^{ \theta}) = \delta^{-\theta}\ell(1/\delta).
       	\end{align*}
    	However, \eqref{eq:intcond2} clearly holds since 
    	$\int_0^{1/2} \left( |\log(1-t)|(1-t) + |\log(t)| t \right) t^{-(1+\theta)}\mathrm{d}t < \infty$. 
    
    	Conversely, let $(c_i)_{i \geq 2}$ be a (deterministic) collection of nonincreasing nonnegative real 
    	numbers satisfying $\sum_{i \geq 2} c_i = 1$ but $\sum_{i \geq 2} |\log(c_i)| c_i = + \infty$ 
    	(e.g., take $c_i = C/(i |\log(i)|^{3/2})$ for some constant $C>0$). 
    	Now consider a dislocation measure  associated with the following dynamics: 
    	at rate $f_{\theta}(t)\mathrm{d}t$, we break into infinitely many pieces, 
    	the largest of which has size $S_1 = 1-t$, and for $i \geq 2$, 
    	the $i^{\text{th}}$ largest of which has size $s_i = t c_i$. 
    	Similarly as before, the crumbling exponent is $\theta$. 
    	However \eqref{eq:intcond2} fails since 
    	\[
        \int_{\MassP} \sum_{i \geq 1 } |\log(s_i)| s_i \nu(\mathrm{d}\mathbf{s}) 
    	= \int_0^{1/2} \hspace{-.2cm}\big(|\log(1-t)|(1-t) + \sum_{i \geq 2} (c_i t) |\log(c_it) |\big) t^{-(1+\theta)} \mathrm{d}t = +\infty.
        \]
    
        In the case $\theta = 0$, set $f_0(t) = ((1 - \log t)^2 t)^{-1}$ and repeat the above calculation in the two cases. 
        
    	In summary, for any $\theta \in [0,1)$ condition \eqref{eq:intcond2} may or may not hold for a 
    	dislocation measure $\nu$ with crumbling index $\theta$. 
    \end{rem}

\subsection{Spine fragments}

    Over the course of proving our results, we rely on spine methods, 
    which allow us to make sense of ‘following’ a distinguished particle 
    through the jump evolution of the system. 
    More explicitly, we appeal to the size-biased (many-to-one) construction 
    introduced in~\cite{bertoin1} (see also the monograph~\cite{bertoinbook}), 
    which consists in following a size-biased fragment forward in time. 
    Such spine or many-to-one techniques are by now standard in the theory 
    of branching processes.
    
    Briefly, we follow a single fragment, 
	and when it undergoes a fragmentation, we choose a size-biased fragment to continue 
	following after the split.
	This turns out to be a sound notion even in the infinite-activity case.

	The pertinent fact here is that the size of this size-biased fragment process
    if $(Z_t)_{t \geq 0}$ is a self-similar Markov process
	(see Section~\ref{sec:one} for a detailed explanation) and thus has the Lamperti representation
	\begin{align} \label{eq:distid}
	    (Z_t)_{t \geq 0} \overset{d}{=} \left( e^{- \xi_{\rho(t)}} \right)_{t \geq 0},
	\end{align}
	where $(\xi_t)_{t \geq 0}$ is a nondecreasing L\'evy process and $(\rho(t))_{t \geq 0}$ is a time change.
    The Laplace exponent of this L\'evy process is given in terms of the dislocation measure $\nu$ of the underlying fragmentation via the formula
        \begin{align*}
        \Phi(q) := \int_{\MassP} \left( 1 - \sum_{i \geq 1} s_i^{q+1} \right) \nu (\mathrm{d}\mathbf{s}).
    \end{align*}
    The time change associated with the process is given by
   	\begin{align} \label{eq:sclock}
		t = \int_0^{\rho(t)} e^{\alpha \xi_s} \, \mathrm{d}s, \quad t \geq 0.
	\end{align} 
    The L\'evy process $(\xi_t)_{t \geq 0}$ has finite expectation if and only if $\Phi'(0) <\infty$, or equivalently, if \eqref{eq:intcond2} holds. Under the condition \eqref{eq:intcond2}, Bertoin uses the L\'evy process to gain a rough idea of the behaviour of the largest particles in the system at time $t > 0$ as follows. 
	When $(\xi_t)_{t \geq 0}$ has finite expectation, its paths are easily controlled at a macroscopic scale,
	as there are no macroscopically significant jumps.
	Using this property, it is possible to show that the random variable
	\begin{align*} 
    		W_t := -\frac{1}{\alpha} \frac{\log Z_t}{\log t} 
    		\quad \text{satisfies} \quad \lim_{t \to \infty} W_t = 1 \quad \text{almost surely}.
	\end{align*}
	Since the size $X_1(t)$ of the largest fragment in the process at time $t$ is at least as large as $Z_t$,
	the size of the size-biased fragment, one can deduce that
	\begin{equation}\label{eq:x1limsup}
    		\limsup_{t \uparrow \infty} \frac{-\log X_1(t)}{\alpha \log t} \leq 1 \quad \text{almost surely}.
	\end{equation}
	In fact, it is possible to show that as $t \to \infty$, 
	\[
    		-\log X_1(t) \sim -\log Z_t \quad \text{almost surely},
	\]
	and deduce Bertoin’s result~\eqref{eq:bertoin} from this relation.

	When the integrability condition~\eqref{eq:intcond2} fails, however, 
	the subordinator $(\xi_t)_{t \geq 0}$ instead satisfies the property that 
	$\mathbb{E}[\xi_t] = +\infty$ for all $t > 0$.
	In this case, it is a consequence of the results of Caballero and Rivero~\cite{CR} 
	that $W_t$ converges in distribution to a nondegenerate random variable that is $\geq 1$ almost surely.
	In light of our main result, Theorem~\ref{thm:ia}, which says in particular that 
	\eqref{eq:x1limsup} holds, it follows that when \eqref{eq:intcond2} fails, 
	the size-biased fragment is no longer a good proxy for the size of the largest fragment in the process, in the sense that
	\[
		-\log X_1(t) \nsim -\log Z_t.
	\]

	Fortunately, it is possible to adapt the size-biased fragment idea to obtain a better proxy 
	for the size-biased fragment process. Namely, instead of choosing a size-biased child fragment 
	at each fragmentation event (interpreted, if necessary, continuously in time), 
	one can instead choose the \emph{largest} child fragment at each fragmentation event.
	We call this process the \textbf{greedy fragment process}.
	While we will not use the greedy fragment process in our proof of Theorem~\ref{thm:ia} 
	(preferring to rely on the more flexible size-biased fragment process, which can be used 
	in the context of a many-to-one formula), it is possible to use the greedy fragment process 
	to conclude that, provided \eqref{eq:intcond} holds, \eqref{eq:x1limsup} is valid
	regardless of whether or not \eqref{eq:intcond2} holds.

	Sketching the details of this calculation here, let $(G_t)_{t \geq 0}$ denote the size-process
	of the greedy fragment process. In analogy with~\eqref{eq:distid}, we have
	\begin{align*} 
    		(G_t)_{t \geq 0} \overset{d}{=} \left( e^{- \zeta_{\rho_\zeta(t)}} \right)_{t \geq 0},
	\end{align*}
	where $(\zeta_t)_{t \geq 0}$ is a nondecreasing L\'evy process with a different 
	Laplace mechanism from that of $(\xi_t)_{t \geq 0}$ (which can nonetheless be expressed in terms of $\nu$), 
	and the stochastic clock $(\rho_\zeta(t))_{t \geq 0}$ is defined as in~\eqref{eq:sclock}, 
	but with $(\zeta_s)_{s \geq 0}$ replacing $(\xi_s)_{s \geq 0}$ in that equation.
    In this case however, the greedy fragment L\'evy process $(\zeta_t)_{t \geq 0}$ has Laplace exponent
    \begin{align*}
        \Psi(q) = \int_{\MassP} (1 - s_1^q)\nu(\mathrm{d}\mathbf{s}). 
    \end{align*}
	While the L\'evy process $(\xi_t)_{t \geq 0}$ may or may not have finite expectation depending on 
	whether \eqref{eq:intcond2} holds, it is always the case, by virtue of the milder condition~\eqref{eq:intcond},
	that $(\zeta_t)_{t \geq 0}$ has finite expectation. Indeed,
    \begin{align*}
    \mathbb{E}[ \zeta_1 ] := \Psi'(0) = \int_{\MassP} |\log(s_1)|  \nu(\mathrm{d}\mathbf{s}) < \infty,
    \end{align*}
    where the latter inequality follows from \eqref{eq:intcond}. 

    In short, using the greedy fragment process instead of the size-biased fragment process, one may amend the sketch proof above to conclude that the log-size of the largest fragment is of the order $-\frac{1}{\alpha}\log t$, regardless of whether \eqref{eq:intcond2} does or does not hold.

\subsection{Proof outline} \label{sec:outline} 
    
    In this informal section we outline some of the key ideas of the proof. In order to keep the discussion here reasonably fluent, we will use the notation $e^{A(t,h)} \approx e^{B(t,h)}$ if $A(t,h)/B(t,h) \to 1$ as $t$ and/or $h$ are sent to infinity. 
    
    Before developing our fine results for L\'evy processes in Section \ref{sec:levy}, we begin in Section \ref{sec:CMJ} by studying the number of particles existing at any moment in time who have a size of the order $O(e^{-h})$, and show that the number of such particles is at least $e^{ (1+o(1))h}$. 
    For this task, we rely on the theory of Crump–Mode–Jagers branching processes.
	Of course, there are certain technical challenges associated with making sense of a ‘particle’ 
	in the infinite-activity case, as fragmentation events occur continuously in time, but we do so by viewing the particles within the process at certain genealogy-dependent times.
    
	Our work in this case is related to that of Bertoin and Martinez~\cite{BM}, who associate 
	the process of fragmentation with an energy cost, and estimate the amount of energy required 
	to break up material into smaller pieces.

    This guarantees that the process has $e^{(1+o(1))h}$ particles of size $O(e^{-h})$ will be used in a lower bound that will complement an upper bound that uses a first moment formula. In other words, in the remainder of this proof overview we will appeal to arguments in expectation, with the understanding that our bounds from Section \ref{sec:CMJ} will assure that expectation estimates correspond well with almost sure estimates.
        
    Recall that the process $(Z_t = e^{ - \xi_{\rho(t)}})_{t \geq 0}$ measures the size of a size-biased chosen fragment forwards in time. Accordingly, for suitable functions $f:(0,\infty) \to \mathbb{R}$ on the size of a fragment, we have the relation
    \begin{equation}\label{eq:2:Many2one0}
		\mathbb{E} \left[ \sum_{i \geq 1} f(X_i(t)) \right] 
		= \mathbb{E} \left[ \frac{f(Z_t)}{Z_t} \right],
	\end{equation}
    where the sum inside the expectation on the left is over the sizes $(X_1(t),X_2(t),\ldots)$ of the existing fragments at time $t$. Setting $f(x) := \mathrm{1} \{ x \geq e^{-h} \}$ to be the indicator function that the size of the fragment exceeds $e^{-h}$ at time $t$, we obtain the relation
	\begin{equation}\label{eq:2:Many2one00}
		\mathbb{E} \left[ \# \{ \text{Particles of size $\geq e^{-h}$ at time $t$} \}  \right] 
		= \mathbb{E}[ e^{ \xi_{\rho(t)} } \mathrm{1} \{ \xi_{\rho(t)} \leq h \} ] \approx  e^h \mathbb{P}[ \xi_{\rho(t)} \leq h ],
	\end{equation}
    where the final relation above follows from the heuristic that the expectation gets most of its contribution from events where $\xi_{\rho(t)}$ is close to but does not exceed $h$.

    Given a fixed $h$, we are interested in choosing the largest possible $t$ such that the quantity $e^h \mathbb{P}[ \xi_{\rho(t)} \leq h]$ is not too small. We begin in Section \ref{sec:levy} by studying the extreme lower tails of the (un-time-changed) L\'evy process $(\xi_t)_{t \geq 0}$. Using changes of measure and delicate analysis of slowly varying functions, we prove the main result of this section, which states that
    \begin{align*}
        \mathbb{P}[ \xi_t \leq 1 ] \approx  \exp \left\{ - t^{ \frac{1}{1-\theta} } L(t) \right\} \qquad \text{as $t \to \infty$},
    \end{align*}
    where $L(\cdot)$ is another slowly varying function that can be constructed explicitly from $\ell$ (see \eqref{eq:Lpuredef}).

Consider now the tail events of the time-changed process $\mathbb{P}[ \xi_{\rho(t)} \leq h ]$. By studying the delicate interplay between the tail events of the subordinator and the time change, we are able to prove that for large $h$ and very large $t$ (i.e., with $te^{- \alpha h}$ also large), we have the relation
\begin{align} \label{eq:relat}
\mathbb{P} \left[ \xi_{\rho(t)} \leq h \right] \approx  \exp \left\{ - C_2 (t e^{ - \alpha h} )^{\frac{1}{1-\theta}} L(t e^{ - \alpha h} ) \right\}.
\end{align} 
The constant $C_2 = C_2(\alpha,\theta)$ in \eqref{eq:relat} is given by $(\alpha/\theta)^{\theta/(1-\theta)}$, and for $\theta \in (0,1)$ is the solution of a variational problem
\begin{align*}
C_2 = C_2(\alpha,\theta) := \inf \left\{ \int_0^\infty \left(f(s) e^{ \alpha s} \right)^{\frac{1}{1-\theta}} \mathrm{d}s : f \text{ satisfies} \int_0^\infty f(s) \mathrm{d}s = 1 \right\},
\end{align*}
where the infimum is taken over all functions $f:[0,\infty) \to [0,\infty)$ with integral $1$. Implicit in our proof of \eqref{eq:relat} is the idea that if $f_*:[0,\infty) \to [0,\infty)$ is the minimiser of the variational problem, when $te^{-\alpha h}$ is very large, the tail event $\mathbb{P}[ \xi_{\rho(t)} \leq h]$ has most of its contribution due to the stochastic process $\xi_{\rho(t)}$ spending time of the order $t f_*(s) \mathrm{d}s$ in the height interval $[h-s,h-s+\mathrm{d}s]$. When $\theta = 0$, $C_2(\alpha,0) = 1$, and while the variational problem is not well posed, it is possible to interpret the solution to the variational problem as having its solution $f_*(s)\mathrm{d}s$ be the Dirac mass at zero. Accordingly, when $\theta = 0$, the tail event $\mathbb{P}[ \xi_{\rho(t)} \leq h]$ gets its leading order contribution from spending $(1-o(1))t$ time in a small height window $[h-\varepsilon,h]$, and time $o(1)t$ elsewhere. 
The proofs of the rigorous forms of the relation \eqref{eq:relat} with absolute bounds are highly technical, and occupy large amounts of Sections \ref{sec:upper} and Section \ref{sec:lower}. 

In any case, combining \eqref{eq:2:Many2one00} with \eqref{eq:relat} we see that 
\begin{align*}
	\mathbb{E} \left[ \# \{ \text{Particles of size $\geq e^{-h}$ at time $t$} \}  \right] \approx  e^{H(t,h)}
\end{align*}
where
\begin{align*}
H(t,h) := h - C_2(te^{-\alpha h})^{\frac{1}{1-\theta}} L(t e^{ - \alpha h} ). 
\end{align*} 
As mentioned above, by combining our expectation bounds with our estimates for the underlying Crump-Mode-Jagers process, we will see that our process is well captured by its first moment behaviour, in the sense that for large $t$ and $h$,
\begin{align*}
&H(t,h) \gg 0 \implies \text{there are particles of size $e^{-h}$ at time $t$ with high probability,}\\
&H(t,h) \ll 0 \implies \text{there are no particles of size $e^{-h}$ at time $t$ with high probability}.
\end{align*}
Thus, we are interested in finding a relationship between $t$ and $h$ such that $e^{H(t,h)}$ becomes large or small. 
Specifically, we find that if we choose a slowly varying function $G(\cdot)$ correctly and set 
\begin{align*}
F_\theta(h) := h^{1-\theta}e^{\alpha h} G(h),
\end{align*}
then with the right choice of $G$ we have the sharp transition
\begin{align*}
H( (1+\delta)F_\theta(h),h) \to 
\begin{cases}
- \infty \qquad :\text{$\delta > 0$},\\
+ \infty \qquad :\text{$\delta < 0$}.
\end{cases}
\end{align*}
In short, the last time at which particles of size $e^{-h}$ exist in the process takes the form $t = (1+o(1))F_\theta(h)$ almost surely. Our main result, Theorem \ref{thm:ia}, now follows from inverting the function $F_\theta$. Indeed, a brief calculation tells us that the inverse function of $F_\theta$ takes the form
\begin{align*}
F_\theta^{-1}(t) = \frac{1}{\alpha} \left\{ \log t - (1-\theta) \log \log t + (1-\theta)\log \alpha - \log G( \log t ) \right\} + O(\log \log t/\log t).
\end{align*}
This coincides with the statement of Theorem \ref{thm:ia}. The descriptions of the correction term $h(t)$ following the statement of Theorem \ref{thm:ia} follow from further reductions that can be made (relating the slowly varying function $G$ to $\ell$ via $L$) along the way.


That completes our discussion of our proof outline. In the next section we begin working towards our proofs in earnest by relating our fragmentation process to a Crump-Mode-Jagers branching process.
	\begin{figure}
	\includegraphics[width=0.49\textwidth]{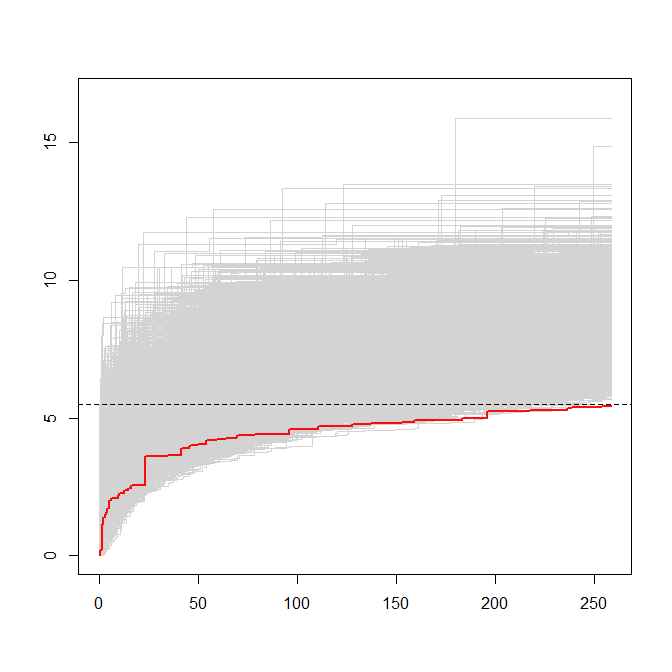}
	\includegraphics[width=0.49\textwidth]{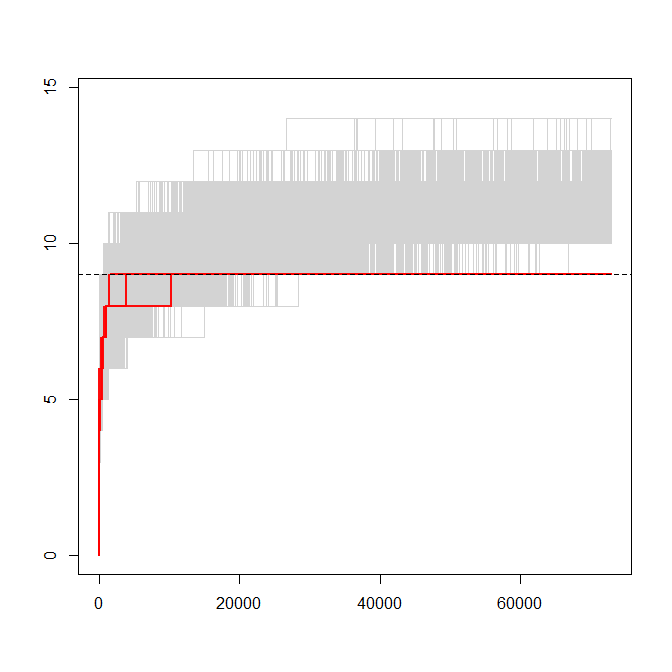}
	\caption{Simulation of a L\'evy subordinator under
        the time change \eqref{eq:sclock}.
           There were simulated 
        $10^4$ trajectories.
		On the left there is a gamma subordinator under 
        the time change,
        up to time $t=258.4618$, $\alpha=0.7$
		and $h=5.5$. On the right there is a homogeneous Poisson process under 
        the time change, with intensity one 
		up to time $t=72927.76$, $\alpha=1$ and $h=9$. 
		In red we indicated trajectories for which $\xi_{\rho(t)} \leq h$.
		Note that in both cases initially the path exhibits a typical behaviour and slows down near $t$.
	} 
	\label{fig:sym4}
	\end{figure}

\section{Correspondences with Crump--Mode--Jagers branching processes} \label{sec:CMJ}

\subsection{Crump--Mode--Jagers branching processes}

	In this section, we introduce Crump--Mode--Jagers branching processes 
	(CMJ processes for short). Let 
	$D$ be a random point process on $[0, \infty)$. 

 	In the Crump--Mode--Jagers 
	branching process, we start at height zero with a single particle. This particle 
	reproduces offspring particles at heights 
	$0 \leq \mathcal{H}(1) \leq \mathcal{H}(2) \leq \ldots$ according to an independent copy of the 
	point process 
	$D$. If the reproductive point process has only $k$ points on 
	$[0,\infty)$, we use the convention that $\mathcal{H}(j) = \infty$ for all $j > k\geq 0$. 
	Thereafter, the $j^{\text{th}}$ child of the initial particle reproduces 
	children at heights
	\begin{align*}
		(\mathcal{H}(j1),\mathcal{H}(j2),\ldots ) := 
		(\mathcal{H}(j)+ d_{j1},\mathcal{H}(j) + d_{j2},\ldots),
	\end{align*}
	where $0 \leq d_{j1} \leq d_{j2} \leq \ldots$ are the points (listed in increasing order) 
	of an independent copy of the point process $D$ (if the process has only $k$ points, 
	set $d_{j\ell } :=+ \infty$ for all $\ell > k$). More generally, using the Ulam--Harris 
	labelling convention, each particle in the process is associated with a word $w$ in the set
	\begin{align*}
		\mathbb{T} := \bigcup_{n \geq 0} \mathbb{N}^n,
	\end{align*}
	with $\mathbb{N}^0=\{\varnothing\}$.
	If a particle associated with a word $w$ has height $\mathcal{H}(w)$, then the children 
	of this particle occur at heights
	\begin{align*}
		\left( \mathcal{H}(w1),\mathcal{H}(w2),\ldots \right) 
		= \left( \mathcal{H}(w) + d_{w1} , \mathcal{H}(w) + d_{w2},\ldots \right),
	\end{align*}
	where $\mathcal{D}_w = (d_{w1},d_{w2},\ldots )$ are the points (labelled in nondecreasing order) 
	of an independent copy $\mathcal{D}_w$ of $D$. In essence, the Crump--Mode--Jagers process is a random function
	\begin{align*}
		\mathcal{H}: \mathbb{T} \to [0,\infty]
	\end{align*}
	that may also be regarded as a branching random walk.
	We refer the reader to~\cite{Iksanov:2024:asymptotic} for an extensive literature review on the topic of CMJ processes.

\subsection{Limit theory for supercritical CMJ processes} \label{sec:CMJlimit}

	We now develop some central properties of supercritical CMJ  processes, 
	following Nerman \cite{nerman}. Firstly, we define the reproduction intensity $\mu$ 
	of the reproductive point process $D$ to be the measure $\mu$ on $[0,\infty)$ given 
	by
	\begin{align*}
		\mu(A) := \mathbb{E} [ D(A) ] := \mathbb{E} [ \# \{ \text{points of $D$ in $A$} \} ]
	\end{align*}
	for any Borel subset $A\subseteq [0,\infty)$. 

    Consider the following list of assumptions that may be satisfied by the law of the reproductive point process $D$:
    \begin{quote} 
    \normalsize
	\begin{itemize}
		\item[(i)] $\mu$ is not concentrated on any lattice $\{0,h,2h,\ldots\}$, $h>0$.
		\item[(ii)] There exists a Malthusian parameter, that is $\kappa > 0$ with the property that	
			\begin{align*}
				1 = \int_{[0,\infty)} e^{ - \kappa h } \mu( \mathrm{d}h).
			\end{align*}
 		\item[(iii)] With $\kappa$ as in (ii), the first moment of the exponentially tilted measure $e^{ - \kappa h } \mu( \mathrm{d}h)$ is finite, i.e.,
			\begin{align} \label{eq:firstmoment}
				\int_{[0,\infty)}he^{ - \kappa h } \mu( \mathrm{d}h) < \infty.
			\end{align}
        \item[(iv)] With $\kappa$ as in (ii), we define a random variable $\Omega_\kappa$ by 
	\begin{align*}
		\Omega_\kappa := \int_{[0,\infty) } e^{ - \kappa h } D(\mathrm{d}h).
	\end{align*}
    The variable $\Omega_\kappa$ satisfies the \emph{Kesten--Stigum condition}, 
	\begin{align} \label{eq:KS}
		\mathbb{E} \left[ \Omega_\kappa \log_+ \Omega_\kappa \right] < \infty.
	\end{align}
	\end{itemize}
    \end{quote}

\vspace{3mm}
	Given $a > 0$, define the random variable
	\begin{align*}
		D_h^a := \# \{ w \in \mathbb{T} : \mathcal{H}(w) \in (h-a,h] \},
	\end{align*}
	and for short write $D_h := D_h^\infty=\# \{ w \in \mathbb{T} : \mathcal{H}(w) \leq h \}$. 

Then according to \cite{nerman, doney1, doney2} under the conditions (i)--(iv) above, there exists a nontrivial and nonnegative random variable $W$ such that we have the almost-sure convergence
\begin{align} \label{eq:limit}
D_h e^{ - \kappa h} \to W \qquad \text{as $h \to \infty$},
\end{align}
and moreover, the event $\{W > 0\}$ a.s. coincides with the event that the process has infinitely many particles.
In addition, under these same conditions we have 
	\begin{align*} 
D_h^a e^{ - \kappa h} \to (1-e^{-\kappa a} ) W \qquad \text{as $h \to \infty$},
	\end{align*}
    where for every $a > 0$, $W$ is the same limit defined in \eqref{eq:limit}. 

\subsection{Crump--Mode--Jagers from self-similar fragmentations} \label{sec:proj}

	There is a natural way of generating a CMJ branching process from a self-similar 
	fragmentation process with finite activity, namely one simply associates each fragmentation event 
	with an increase in generation. In this section, however, we will seek to find a more robust 
	connection between self-similar fragmentation processes and CMJ processes that is 
	also suitable for the infinite-activity case. Our trick here is to observe the self-similar 
	fragmentation process at certain discrete and particle-dependent times.

	We may construct a CMJ branching process $\{ \mathcal{H}(w) \,:\, w \in \mathbb{T} \}$ 
	from a self-similar fragmentation process with possible infinite activity as follows. Each 
	$\mathcal{H}(w) < \infty$ will represent a fragment of size $e^{ - \mathcal{H}(w)}$ living at some 
	time $\mathcal{T}(w)$ in the process. Thus, we will construct a collection 
	$\{ \mathcal{T}(w) \,:\, w \in \mathbb{T} \}$ of times $\mathcal{T}(w)$ associated to words
	$w \in \mathbb{T}$.

	The construction is as follows. Set $\mathcal{T}(\varnothing) := 0$ and $\mathcal{H}(\varnothing) := 0$. 
	The word $\varnothing$ is associated with the initial fragment of size $1 = e^{- \mathcal{H}(\varnothing)}$ 
	existing at time $\mathcal{T}(\varnothing) = 0$. Consider now viewing the descendant fragments of this 
	initial fragment at time 
	$1 = e^{\alpha \mathcal{H}(\varnothing)}$. These fragments have 
	sizes $(X_1(1),X_2(1),\ldots)$ listed in decreasing order. We define the  variables 
	$\{ \mathcal{H}(i) \,:\, i \geq 1 \}$ by setting $e^{ - \mathcal{H}(i)} := X_{i}(1)$. We also define the 
	first generation times by setting $\mathcal{T}(i) := 1$ for all $i \geq 1$. Thereafter, consider a 
	word $w \in \mathbb{T}$ associated with a fragment existing at a moment $\mathcal{T}(w)$ in time 
	and of size $e^{- \mathcal{H}(w) }$. Consider viewing the descendants of this fragment at time 
	$\mathcal{T}(w) + e^{ \alpha \mathcal{H}(w) }$, and write $(e^{ - \mathcal{H}(w1)},e^{-\mathcal{H}(w2)},\ldots)$ 
	for the sizes of these descendants. Set $\mathcal{T}(wi) :=\mathcal{T}(w) + e^{ \alpha \mathcal{H}(w) }$ 
	for all $i \geq 1$. 

	By the self-similarity property, $\{ \mathcal{H}(w) : w \in \mathbb{T} \}$ is a Crump–Mode–Jagers branching process. 
	Let $\mu$ be the reproduction intensity and $D$ the associated reproductive point process.
    Since the process is conservative, the sum of the sizes of descendants of a given fragment at a later time 
	is equal to the size of that fragment. That is, for every $w \in \mathbb{T}$,
	\begin{align*} 
		e^{- \mathcal{H}(w)} = \sum_{i \geq 1} e^{- \mathcal{H}(wi)}.
	\end{align*}
	In particular, by taking $w=\emptyset$, we have 
	\begin{align} \label{eq:cad}
		1 = \int_{ [0,\infty) } e^{-h} D(\mathrm{d}h) \qquad \text{almost surely}.
	\end{align}   
    Taking expectations through \eqref{eq:cad}, we see that the Malthusian parameter $\kappa$ satisfies $\kappa = 1$. Note that in our case $\Omega_1$ is in fact deterministic, satisfying $\Omega_1 = 1$ almost surely, and accordingly, the Kesten--Stigum condition \eqref{eq:KS} holds trivially.                         
\vspace{3mm}

	We now seek a lower bound on the number of particles existing in the 
	CMJ process associated with our fragmentation processes at large heights. 
	In this direction, we would like to use the limit theory of CMJ processes to derive such a lower bound. 
	We now examine whether the assumptions in (i)-(iv) from Section \ref{sec:CMJlimit} hold. 
	In fact, we will find that while (i), (ii) and (iv) are satisfied, (iii) is not:
\begin{quote}
\normalsize
	\begin{itemize}
		\item[(i)] 	The non-lattice assumption \eqref{eq:nonlattice} guarantees that the associated CMJ process is non-lattice.
        \item[(ii)] The Malthusian parameter for the CMJ process associated with our fragmentation is $\kappa=1$. This follows from the conservativeness of the fragmentation. Indeed, taking expectations through \eqref{eq:cad} we have
        	\begin{align} \label{eq:cad2}
		1 = \int_{ [0,\infty) } e^{-h} \mu(\mathrm{d}h).
	\end{align}
    \item[(iv)] Equation \eqref{eq:cad} entails that $\Omega_1 = 1$ almost surely. It follows that the Kesten--Stigum condition $\mathbf{E}[ \Omega_1 \log_+ \Omega_1 ] < \infty$ holds trivially.
	\end{itemize}
    \end{quote}
However, it is not generally the case that condition (iii) holds for our CMJ process associated with a self-similar fragmentation. It is entirely possible that $\int_{[0,\infty)} he^{ - h} \mu(\mathrm{d}h) = \infty$; in fact, a straightforward argument using the spine process of Section \ref{sec:one} may be used to argue that $\int_{[0,\infty)} he^{-h} \mu(\mathrm{d}h) < \infty$ implies \eqref{eq:intcond2} and the authors believe that the converse implication is also likely true.

	In any case, we circumvent any concern about whether or not $\int_{[0,\infty)}he^{-h} \mu(\mathrm{d}h) < \infty$ 
	(or \eqref{eq:intcond2}) holds by appealing to an inelegant but simple strategy: 
	we simply truncate the CMJ process by ignoring any particle whose height exceeds that of their parent by more than $M$.  
	This procedure induces a second, truncated CMJ process 
	$\mathcal{H}_M := \{\mathcal{H}_M(w) : w \in \mathbb{T} \}$ defined inductively by 
	\begin{align*} 
	\mathcal{H}_M(wi) :=
	\begin{cases}
	\mathcal{H}(wi) \quad &: \text{$\mathcal{H}(wi) - \mathcal{H}(w) \leq M$ and $\mathcal{H}_M(w) < \infty$,}\\
	+ \infty \quad &: \text{otherwise}.
	\end{cases}
	\end{align*} 
    	The reproduction intensity measure $\mu_M$ of the truncated CMJ process $\mathcal{H}_M$ 
	is simply the restriction of $\mu$ to $[0,M]$. Likewise, the random point process $D_M$ has the law of $D$ 
    	restricted to $[0,M]$ (i.e., $D$ with every point outside of $[0,M]$ deleted). 

	We note that this definition couples, for any $ M > 0$, the truncated process with the untruncated process.

	We now verify that all four of the conditions (i)--(iv) of Section \ref{sec:CMJlimit} are satisfied by the truncated CMJ process, provided $M$ is sufficiently large:
\begin{quote}
\normalsize
    \begin{itemize}
	\item[(i)] 	If $M$ is sufficiently large, $\mathcal{H}_M$ is still not concentrated on any lattice so that (i) holds.
        \item[(ii)] Equation \eqref{eq:cad2} implies that there exists $\kappa_M < 1$ such that 
      \begin{align} \label{eq:malthusian}
		1 = \int_{[0,M]} e^{ - \kappa_M h} \mu(\mathrm{d}h) = \int_{[0,\infty)} e^{- \kappa_M h } \mu_M(\mathrm{d}h).
	\end{align}
    \item[(iii)] Plainly, 
    \begin{align*}
    \int_{[0,\infty)} h e^{- \kappa_M h } \mu_M(\mathrm{d}h) \leq M \int_{[0,M]} e^{ - \kappa_M h} \mu(\mathrm{d}h) = M <\infty.
    \end{align*}
\item[(iv)] Similarly,
  \begin{align*}
	  \Omega_{\kappa_M}^{(M)} = \int_{[0,\infty)} e^{ - \kappa_M h} D_M(\mathrm{d}h) \leq e^{ (1-\kappa_M)M} \int_{[0,M]} e^{ - h} D (\mathrm{d}h) \leq e^{ (1-\kappa_M)M},
    \end{align*}
    almost surely.
    Thus, the Kesten--Stigum condition $$\mathbf{E}\left[ \Omega_{\kappa_M}^{(M)} \log_+\left( \Omega_{\kappa_M}^{(M)} \right) \right] < \infty$$ 
    holds. 
	\end{itemize}
    \end{quote}

Let us also note that the Malthusian parameter $\kappa_M$ satisfies $\kappa_M \uparrow 1$ as $M \to \infty$.

In summary, the level $M$-truncated CMJ process clearly satisfies (i)--(iv), and accordingly if we set 
\begin{align*}
D_{h,M}^a := \# \{ w \in \mathbb{T} \,:\,  \mathcal{H}_M(w) \in (h-a,h]  \}
\end{align*}
and $D_{h,M} := D_{h,M}^\infty$, then we have the almost sure convergence
	\begin{align} \label{eq:conve}
		D_{h,M}^a e^{ - \kappa_M h} \to \left(1 - e^{ - \kappa_M a }\right)W_M, \quad\text{as }  h \to \infty,
	\end{align}
for a nontrivial nonnegative limit random variable $W_M$ depending on the truncation level $M$.

\begin{lemma} \label{lem:mart}
	The probability $p_M := \mathbb{P}[ W_M > 0 ]$ increases to $1$ as $M \to\infty$.
\end{lemma}
\begin{proof}
	As noted above, the event $\{W_M > 0 \}$ coincides with the event that the level $M$-truncated CMJ process has 
	infinitely many particles. To bound the probability of this event from below, we consider a coupling with a 
	discrete-time Galton--Watson tree. 
	In this direction, let $L(\varnothing) \in \{0,1,\ldots,+\infty\}$ denote the number of children of finite height birthed 
	by the initial particle $\emptyset \in \mathbb{T}$ in the CMJ process associated with a self-similar fragmentation process, i.e.,
	\begin{align*}
		L(\varnothing) := \# \{ i \geq 1\,:\, \mathcal{H}(i) < \infty \}.
	\end{align*}
	Equivalently, $L(\varnothing)$ is the number of fragments of positive size in the fragmentation process at time one.
	Likewise let
	\begin{align*}
		L_M(\varnothing) := \# \{ i \geq 1\,:\,  \mathcal{H}(i) \leq M \}
	\end{align*}
	be the associated number in the truncated process. 
	This corresponds to the number of fragments of size $\geq e^{-M}$ in the fragmentation process.
	Then,  the function $M\mapsto L_{M}(\varnothing)$ is nondecreasing, 
	$L_M(\varnothing) < \infty$ for all $M < \infty$, and 
	$L_M(\varnothing) \uparrow L(\varnothing)$ as $M \to \infty$. 

	Set $r = \mathbb{P}[ L(\varnothing) = 1]$, $b=\mathbb{P}[L(\varnothing)\geq 2]$
	and note that since for $\alpha>0$ the fragmentation survives with probability one (i.e. does not hit $(0, 0, \ldots)$), 
	$\mathbb{P}[L(\varnothing)=0] =0$ (see the tagged fragment process described in Section~\ref{sec:levy}).
	Now observe that 
	\begin{align*}
		\mathbb{P}[ L_M(\varnothing) = 1 ] = r_M \to r,\qquad \text{as $M\to \infty$}
	\end{align*}
	and
	\begin{align*}
		b_M = \mathbb{P}[ L_M(\varnothing) \geq 2 ] \to 1 - r, \qquad \text{ as $M \to \infty$}.
	\end{align*}
	In particular, the number of particles in the process is bounded below by a Galton-Watson 
	tree in which a particle has two children with probability $b_M$, has one child with probability $r_M$, 
	and otherwise has zero children with probability $1-r_M-b_M$. We note in particular that the probability of 
	having no children, $1 - r_M- b_M$, converges to zero as $M \to \infty$. 
	The extinction probability $q_M$ of this Galton-Watson tree converges to zero as $M \to \infty$. 
	In fact, $q_M$ is the smallest root of $s = ( 1 - r_M - b_M ) + r_Ms + b_M s^2$. So,
	\[
		q_M = (2b_M)^{-1}(1 - r_M  - \sqrt{ (1-r_M)^2 - 4 b_M ( 1 - r_M - b_M) })\rightarrow 0, \text{ as $M \to \infty$}.
	\]
Since the probability that the CMJ process survives forever is at least $1 - q_M$, it follows that $p_M \to 1$ as $M \to \infty$. 
\end{proof}


\color{black}

We will extract from these results the following rough lower bound on the number of particles in our CMJ branching process existing at any point in time and with heights $\mathcal{H}(w)$ taking values in $(h-a,h]$.

\begin{proposition} \label{prop:heightlower}
Let $a > 0$ and $\varepsilon > 0$. Then, there exists a random number $h(a,\varepsilon)$ such that for all $h \geq h(a,\varepsilon)$ we have
\begin{align*}
D_h^a \geq e^{ (1 - \varepsilon) h}.
\end{align*}
\end{proposition}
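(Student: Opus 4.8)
The plan is to read the proposition off directly from the Nerman--Doney convergence \eqref{eq:conve} quoted above, together with the strict positivity of the limit $W$. Fix $a>0$ and $M>0$, with $M$ large enough that the truncated Crump--Mode--Jagers process $\mathcal{H}_M$ satisfies (i)--(iii) and the Kesten--Stigum condition \eqref{eq:KS}, as was arranged in the construction preceding the statement (here $\Omega_{\kappa_M}\le 1$ almost surely, so \eqref{eq:KS} is automatic, and (iii) holds since $\mu_M$ is supported on $(0,M)$). Then \eqref{eq:conve} applies: almost surely,
\begin{align*}
	Z_{h,M}^a\, e^{-\kappa_M h} \longrightarrow c_M := \frac{W}{\kappa_M}\bigl(1-e^{-\kappa_M a}\bigr) \qquad \text{as } h\to\infty .
\end{align*}
Since $W>0$ almost surely and $\kappa_M\in(0,1)$ with $a>0$, we have $c_M>0$ almost surely.

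Work from now on on the almost-sure event $\{W>0\}$. By the convergence above there is a random threshold $h_1=h_1(a,M)<\infty$ such that $Z_{h,M}^a\, e^{-\kappa_M h}\ge c_M/2$ for all $h\ge h_1$, i.e.
\begin{align*}
	Z_{h,M}^a \ge \tfrac{c_M}{2}\, e^{\kappa_M h} = \tfrac{c_M}{2}\, e^{(1-\varepsilon_M)h}, \qquad h\ge h_1 .
\end{align*}
It then remains to absorb the prefactor $c_M/2$ into the exponent at the price of weakening $\varepsilon_M$ to $2\varepsilon_M$: because $\varepsilon_M>0$ strictly (truncation strictly lowers the Malthusian parameter, $\kappa_M<1$), we have $\tfrac{c_M}{2}e^{(1-\varepsilon_M)h}\ge e^{(1-2\varepsilon_M)h}$ as soon as $e^{\varepsilon_M h}\ge 2/c_M$, that is, for all $h\ge \varepsilon_M^{-1}\log_+(2/c_M)=:h_2(a,M)$. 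Taking $h(a,M):=\max\{h_1(a,M),h_2(a,M)\}$, which is finite almost surely, yields $Z_{h,M}^a\ge e^{(1-2\varepsilon_M)h}$ for all $h\ge h(a,M)$, as claimed.

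I do not expect a genuine obstacle here: all the substance is in the C--M--J limit theory already invoked — Doney's theorem supplying the almost-surely positive limit $W$ of $Z_{h,M}e^{-\kappa_M h}$, and Nerman's window refinement \eqref{eq:conve} — and in the fact that the truncation was designed precisely to make (iii) and \eqref{eq:KS} hold. The only points deserving a line of care are, first, that the threshold must be random (it depends on the realisation through $c_M$, hence through $W$, which can be arbitrarily small, so no deterministic bound is possible), and second, that the passage from exponent $1-\varepsilon_M$ to $1-2\varepsilon_M$ relies on having a strictly positive slack $\varepsilon_M>0$, which is exactly what truncation buys us.
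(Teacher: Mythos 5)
Your proof is correct and follows essentially the same route as the paper: both use the Nerman--Doney convergence \eqref{eq:conve} together with the almost-sure positivity of $W$, and then absorb the random constant into the exponent by spending the strictly positive slack $\varepsilon_M$, yielding the bound $e^{(1-2\varepsilon_M)h}$ beyond a random threshold.
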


\begin{proof}
	Take a copy of the untruncated CMJ process associated with our fragmentation process. 
	Simultaneously, for each $M \in (0,\infty)$, we can couple a copy of the level 
	$M$-truncated process with a copy of this untruncated CMJ process. 
	Moreover, for each $M \in (0,\infty)$, we have the convergence in \eqref{eq:conve}. 

	By Lemma \ref{lem:mart}, the survival probability $p_M := \mathbb{P}[ W_M > 0]$ increases to $1$ as $M \to \infty$. 
	By the coupling, it follows that there exists a random $M_0$ such that $W_M > 0$ for all $M \geq M_0$. 

	Moreover, with $\varepsilon > 0$ as in the statement of the proposition, and with the Malthusian parameter $\kappa_M$ as in \eqref{eq:malthusian}, there exists a deterministic $M_1(\varepsilon) > 0$ such that for all $M \geq M_1(\varepsilon)$ we have $\kappa_M \geq 1- \tfrac{\varepsilon}{2}$.

In particular, there exists a random $M_2 := M_2(\varepsilon) = \mathrm{max} \{ M_0, M_1(\varepsilon) \}$ such that for all $M \geq M_2$ we have both $W_{M_2} > 0$ and $\kappa_{M_2} \geq 1- \tfrac{\varepsilon}{2}$. By virtue of \eqref{eq:conve}, we have
\begin{align*}
D_{h,M_2}^a  \geq \frac{1}{2} W_{M_2}   (1 - e^{ - \kappa_{M_2} a }) e^{  \kappa_{M_2} h } \geq \frac{1}{2} W_{M_2}  (1 - e^{ - \kappa_{M_2} a }) e^{  (1-\varepsilon/2) h } 
\end{align*}
for all $h \geq h_0(M_2,a)$ exceeding some random height $h_0(M_2,a)$.

Since $W_{M_2} > 0$, there exists a potentially larger random height $h_1(M_2,a) \geq h_0(M_2,a)$ such that whenever $h \geq h_1(M_2,a)$ we have $\frac{1}{2} W_{M_2}  (1 - e^{ - \kappa_{M_2} a }) \geq e^{- \varepsilon h/2}$, which in turn implies that 
\begin{align*}
D_{h,M_2}^a  \geq  e^{  (1-\varepsilon) h }.
\end{align*}

Finally, $D_h^a \geq D_{h,M_2}^a$, since the truncated process consists of a subset of the particles occurring in the original process. It follows that $D_h^a \geq e^{(1-\varepsilon)h}$ for all $h$ exceeding some random height $h(a,\varepsilon) = h_1(M_2(\varepsilon),a)$ depending on $a$ and $\varepsilon$, completing the proof.
\end{proof}

\subsection{Independent sets in Crump--Mode--Jagers branching processes} \label{sec:independent}

	In the sequel, we will use a variant of our previous lower bounds for CMJ processes 
	to show that there exist particles exhibiting certain properties with high probability.
	In this direction, we will apply a variant of Proposition~\ref{prop:heightlower} 
	to show that there exist a number of fragments of size between $e^{-h}$ and $e^{-(h - s)}$ 
	at various times in the process.
	However, we wish to rule out the possibility that one of these fragments is the parent, grandparent, 
	or ancestor of another, so that the corresponding events can be considered independent. 
	This motivates the following definition.

	\begin{df}
		A subset $\mathbb{S} \subseteq \mathbb{T}$ is called an \textbf{antichain} if and only if for every distinct pair of elements $w, w' \in \mathbb{S}$, 
neither $w$ nor $w'$ is an ancestor of the other. 
	\end{df}

	The following result guarantees the existence of large antichains of fragments at large heights.

	\begin{thm} \label{thm:antichains}
		Let $\delta > 0 $ and  $ \log(2) > a > 0$. 
		Then there exists a random number $h_0(a,\delta)\in[0,\infty)$ such that for all $h \geq h_0(a,\delta)$, 
		there exists a subset $\mathbb{S}$ of $\mathbb{T}$ so that the following hold:
		\begin{enumerate}
			\item $\mathbb{S}$ is an antichain,
			\item $\mathcal{H}(w) \in (h-a,h]$ for all $w \in \mathbb{S}$,
			\item The cardinality of $\mathbb{S}$ satisfies $\# \mathbb{S} \geq e^{ (1 - \delta)h}$.
		\end{enumerate}
	\end{thm}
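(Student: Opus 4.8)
The plan is to reduce the construction of the antichain to two ingredients: the lower bound on the height profile furnished by Proposition~\ref{prop:heightlower}, and the trivial observation that particles sharing a common generation automatically form an antichain. The only real work is then to control how many generations are needed to accommodate all of the particles whose heights land in the window $(h-a,h]$.

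Concretely, I would first fix $\delta\in(0,1)$ and $a\in(0,\log 2)$, and pick $M$ so large that $2\varepsilon_M\le\delta/2$. By Proposition~\ref{prop:heightlower} there is then a.s.\ a finite random $h_1=h_1(a,M)$ such that, for all $h\ge h_1$, the set $\mathbb A:=\{w\in\mathbb T:\mathcal H_M(w)\in(h-a,h]\}$ has cardinality $Z_{h,M}^a\ge e^{(1-\delta/2)h}$. Decompose $\mathbb A=\bigsqcup_{n\ge 1}\mathbb A_n$ with $\mathbb A_n:=\{w\in\mathbb A:|w|=n\}$; each $\mathbb A_n$ is an antichain, since in the Ulam--Harris tree ancestrally comparable words have different lengths. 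Hence, once we know that for large $h$ every $w$ with $\mathcal H_M(w)\le h$ has generation at most $Ch$ for some deterministic $C$, the set $\mathbb A$ is a disjoint union of at most $\lfloor Ch\rfloor$ antichains of total size $\ge e^{(1-\delta/2)h}$, so some $\mathbb A_{n^*}$ has $\#\mathbb A_{n^*}\ge e^{(1-\delta/2)h}/(Ch)\ge e^{(1-\delta)h}$ for $h$ large enough; this $\mathbb A_{n^*}$ is the required set.

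It remains to establish the generation bound, which is the crux. Let $\mu$ be the reproduction intensity of the (untruncated) C--M--J process of Section~\ref{sec:proj}. Conservativeness gives $\int_{[0,\infty)}e^{-x}\mu(\mathrm{d}x)=\mathbb E[\Omega_1]=1$, while the initial fragment has split by time $1$ with positive probability (as $\alpha>0$ and $\nu\neq 0$), so $\mu((0,\infty))>0$; together these yield $q:=\int_{[0,\infty)}e^{-2x}\mu(\mathrm{d}x)<1$. Integrating $e^{2(t-x)}$ against the generation-$n$ intensity $\mu^{*n}$ gives the Chernoff bound $\mathbb E[\#\{w:|w|=n,\ \mathcal H(w)\le t\}]=\mu^{*n}([0,t])\le e^{2t}q^{n}$ for all $t\ge 0$, $n\ge 0$, and since $\mathcal H_M(w)\ge\mathcal H(w)$ the same bound holds with $\mathcal H$ replaced by $\mathcal H_M$. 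Choosing $C>2/\log(1/q)$ and summing over $n$,
\[
\mathbb E\bigl[\#\{w\in\mathbb T:\mathcal H_M(w)\le|w|/C\}\bigr]\le\sum_{n\ge 0}\bigl(e^{2/C}q\bigr)^{n}<\infty,
\]
so a.s.\ only finitely many such $w$ exist; putting $h_0':=C^{-1}\sup\{|w|:\mathcal H_M(w)\le|w|/C\}<\infty$, any $w$ with $\mathcal H_M(w)\le h$ and $h\ge h_0'$ must satisfy $|w|\le Ch$ (otherwise $|w|>Ch\ge Ch_0'$ while $\mathcal H_M(w)\le h<|w|/C$, contradicting the definition of $h_0'$). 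Taking $h_0(a,\delta):=\max\{h_0',h_1,h_*\}$, with $h_*$ a deterministic threshold past which $e^{(1-\delta/2)h}/(Ch)\ge e^{(1-\delta)h}$, completes the proof.

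The step I expect to be most delicate is exactly this generation bound: a priori a lineage could ``creep'' through the window $(h-a,h]$ over enormously many generations by repeatedly shedding only a negligible amount of mass, and such long chains would ruin the generation decomposition. The estimate above rules this out because the tilted mean reproduction measure $e^{-2x}\mu(\mathrm{d}x)$ has total mass strictly below $1$ — equivalently, in the mean the population of fragments that remain essentially full size does not proliferate. (Incidentally, this argument does not use the hypothesis $a<\log 2$, which is presumably imposed only for the later application of the theorem.) Everything else is either quoted from the C--M--J limit theory behind Proposition~\ref{prop:heightlower} or elementary.
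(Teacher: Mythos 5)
Your argument is correct, but it follows a genuinely different route from the paper. The paper never slices by generation: it takes the antichain of \emph{first entrants} to the window, $\mathcal{Q}'(h-a,h]:=\{w:\mathcal{H}(w)\in(h-a,h],\ \mathcal{H}(v)<h-a\ \text{for every strict ancestor }v\}$, and controls the overcount $Z_h^a/Z_h'^a$ by noting that conservativeness together with the hypothesis $a<\log 2$ forces each individual to have at most one child within relative height $a$, so the number of descendants of a first entrant staying in the window is dominated by a geometric random variable; conditional Markov plus Borel--Cantelli along the lattice $h\in a\mathbb{N}$, followed by a half-window trick, then gives the bound for all large real $h$. You instead decompose the window by generation and prove that a.s.\ every $w$ with $\mathcal{H}_M(w)\le h$ has $|w|\le Ch$ for large $h$, via the Chernoff/many-to-one bound $\mathbb{E}\#\{|w|=n,\ \mathcal{H}(w)\le t\}=\mu^{*n}([0,t])\le e^{2t}q^n$ with $q=\int e^{-2x}\mu(\mathrm{d}x)<1$, and then pigeonhole. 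Both proofs draw their quantitative input from Proposition \ref{prop:heightlower} in exactly the same way; what differs is the combinatorial step. Your version has the aesthetic advantage of not using $a<\log 2$ at all (as you observe, that hypothesis is only exploited in the paper's geometric-domination step), at the price of an extra tilting lemma; the paper's version avoids any many-to-one computation and produces the canonical first-ancestor antichain. Two small points you should make explicit to be airtight: (i) $q<1$ needs $\mu((0,\infty))>0$, which uses not just that a split occurs by time $1$ with positive probability but that splits are \emph{strict} (this is where the standing assumptions that $\nu$ is conservative and charges no mass at $(1,0,\ldots)$ enter, so that $s_1<1$ $\nu$-a.e.); and (ii) the manipulations with $\mu^{*n}$ are legitimate because $\mu$ is locally finite --- indeed conservativeness gives the deterministic bound $D([0,K])\le e^{K}$, so $\mu([0,K])\le e^K$ --- even though $\mu$ has infinite total mass in the infinite activity case, and the possible atom of $\mu$ at $0$ (a fragment surviving its observation window unsplit in the finite activity case) is harmless since your bound only uses $q<1$. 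With these remarks added, your proof is complete and, like the paper's, applies verbatim in both the finite and infinite activity settings covered by the theorem.
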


	\begin{proof}
		Define
		\begin{align*}
			\mathcal{D}(h-a,h] := \{ w \in \mathbb{T} : \mathcal{H}(w) \in (h-a,h] \}.
		\end{align*}
        
		By Proposition \ref{prop:heightlower} there exists a random variable 
		$h_0(a,\delta/2)$ such that for all $h \geq h_0(a,\delta/2)$ we have
		\begin{align} \label{eq:bc1}
			D_h^a = \# \mathcal{D}(h-a,h] \geq e^{ (1 - \delta/2) h}. 
		\end{align}
		We now show that with high probability, $\mathcal{D}(h-a,h]$ contains 
		an antichain of size at least $e^{(1-\delta)h}$.
		To this end, let $\tilde{\mathcal{D}}(h-a,h]$ denote the set of first ancestors in $(h-a,h]$. 
		This is the subset of $\mathcal{D}(h-a,h]$ consisting of those elements with no 
	ancestor also in $\mathcal{D}(h-a,h]$. In other words,
		\begin{align*}
			\tilde{\mathcal{D}}(h-a,h] := 
			\{w \in \mathcal{D}(h-a,h] : \mathcal{H}(v) \leq h-a ~\text{for every strict ancestor $v$ of $w$} \}.
		\end{align*}
		Plainly, $\tilde{\mathcal{D}}(h-a,h]$ is an antichain. 
		We now seek to bound below the cardinality $\tilde{D}_h^a$ of $\tilde{\mathcal{D}}(h-a,h]$. 
		In this direction, let $w \in \tilde{\mathcal{D}}(h-a,h]$ and define the random variable
		\begin{align*}
			N_w := \# \{ v \in \mathcal{D}(h-a,h] : v \text{ descendant of $w$} \}.
		\end{align*}
		We control the expectation of $N_w$. Recall that from the conservativeness of the underlying fragmentation process for every $w \in \mathbb{T}$ we have
		\begin{align*} 
			1 = \sum_{i \geq 1}e^{ - ( \mathcal{H}(wi) - \mathcal{H}(w) ) }.
		\end{align*}
		Note that since $a < \log(2)$, the previous equation tells us that each $w$ in $\mathbb{T}$ 
		may have at most one child $wi$ with $\mathcal{H}(wi) - \mathcal{H}(w) < a$. 
		Let $p(a) \in (0,1)$ denote the probability of this occurring, i.e.,
		\begin{align*}
			p(a) := \mathbb{P}\left[ \exists i : \mathcal{H}(i)  < a \right].
		\end{align*}
		It follows that for each $w$ in $\tilde{\mathcal{D}}(h-a,h]$, $N_w$ is stochastically dominated by a 
		geometric random variable with success probability $p(a)$. In particular, there is some constant $c(a) < \infty$ such that 
		\begin{align*}
			\mathbb{E}\big[N_w+1 | w \in \mathcal{D}(h-a,h]\big] \leq c(a).
		\end{align*}
		In particular, 
		\begin{align*}
			\mathbb{E}\left[ D_h^a \left| \tilde{D}_h^a\right. \right] \leq c(a) \tilde{D}_h^a.
		\end{align*}
		By a conditional Markov's inequality, it follows that
		\begin{align} \label{eq:bc2}
			\mathbb{P}\left[ \tilde{D}_h^a \leq e^{ -\delta h/2} D_h^a \right] \leq c(a) e^{ - \delta h/2}.
		\end{align}
		Combining \eqref{eq:bc1} with \eqref{eq:bc2} and using the Borel--Cantelli lemma, 
		it follows that there exists a random $h_1(a,\delta)$ such that 
		\begin{align*}
			\# \tilde{\mathcal{D}}(h-a,h] \geq e^{(1-\delta)h} \qquad 
			\text{for all $h \geq h_1(a,\delta)$ such that $ h  = na $ for some $n \in \mathbb{N}$}.  
		\end{align*} 
		To transfer this result from $h$ sufficiently large taking values in the lattice 
		$\{ n a : n \in \mathbb{N} \}$ to all sufficiently large $h$, 
		note that any interval of the form $(h-a,h]$ necessarily contains a subinterval of the form $((n-1)a/2,na/2]$. In particular,
		\begin{align*}
			\tilde{\mathcal{D}}(h-a,h] \supseteq \tilde{\mathcal{D}}\left(n\frac{a}{2} - \frac{a}{2}, n\frac{a}{2} \right] 
			\qquad \text{for some $n \in \mathbb{N}$}.
		\end{align*}
		It thus follows that for $h \geq h_2(a,\delta) := h_1(a/2,\delta)$ we have
		\begin{align*}
			\# \tilde{\mathcal{D}}(h-a,h] \geq e^{(1-\delta)h}.
		\end{align*}
		Thus, for all real $h$ greater than some random height $h_2(a,\delta)$, 
		the antichain $\mathbb{S} := \tilde{\mathcal{D}}(h-a,h]$ of first ancestors in $(h-a,h]$ 
		has cardinality at least $e^{(1-\delta)h}$, completing the proof.
\end{proof}

\section{Spines, L\'evy processes, and tails of hitting times} \label{sec:levy}

	To describe the behaviour of the largest fragment in a self-similar fragmentation process 
	we will relate the log-sizes of the fragments to a time-changed branching L\'evy process. 
	The large deviation estimates for L\'evy processes are therefore a natural tool to analyse 
	the extremes, i.e., the size of the largest fragment. 
    The deviation estimates 
	for the time-changed process that we present at the end of this section will be utilized in 
	the proof of our main results.

\subsection{A tagged fragment as a L\'evy process}\label{sec:one}

	We now describe the spine construction appearing in~\cite[Chapters 3.2.2 and 3.3.2]{bertoinbook}. 
	Since this process is described in detail there, we will give only a brief verbal description.

For the sake of simplicity, we will begin by describing the construction in the finite-activity case. Assume that the fragmentation starts at time zero with a single object of mass one. We begin by tagging this particle, calling the tag the `spine'.
In the finite-activity case, this object has an exponential lifetime with some rate $\lambda$ and the spine `follows' this particle for the duration of its lifetime, before it dies and breaks into a (possibly infinite) number of pieces whose sizes sum to the size of their parent. At the end of the original particle's lifetime, the spine then makes a size-biased choice among the child particles to determine which one to follow next. Ignoring all other particles in the system, the spine then follows this child particle until that child dies, at which point the spine again chooses a size-biased grandchild to follow next. 

It is possible to generalise this construction to the infinite-activity case, under the integral condition \eqref{eq:1:crumbling}, which guarantees that the size of the followed particle remains positive at all times.

	The size-biased nature of our selection at each time entails that at \emph{any} time $t \geq 0$, 
	given only the state of the underlying fragmentation process at time $t$, 
	but not which particle the spine is following, the spine has a size-biased distribution 
	among all particles in the system at time $t$, regardless of the genealogy of these particles. 
	Consider letting $(Z_t)_{t \geq 0}$ denote the stochastic process where $Z_t$ is the size of the 
	particle followed by the spine at time $t$. As noted in Section \ref{sec:further}, since the spine particle is a size-biased 
	pick from the population at time $t$, we have the many-to-one formula:
	\begin{equation}\label{eq:2:Many2one}
		\mathbb{E} \left[ \sum_{i \geq 1} f(X_i(t)) \right] 
		= \mathbb{E} \left[ \frac{f(Z_t)}{Z_t} \right],
	\end{equation}
	valid for all measurable $f$ supported away from zero.
    The formula \eqref{eq:2:Many2one} reduces the calculation of expectations over the entire system to that of a one-dimensional process.
	It is therefore beneficial to analyse the stochastic process $(Z_t)_{t \geq 0}$ in further detail. 

    Here we begin by noting that the self-similarity of the underlying fragmentation process together with the 
    Markov property entails that we have the distributional identity 
	\begin{equation*}
		\left(Z_{t+t'}/Z_t\right)_{t'} \stackrel{d}{=} \left(Z'_{\, Z_t^{\alpha} t'}\right)_{t'} \qquad   \text{for any $t \geq 0$},
	\end{equation*}
	conditionally on $Z_t$, where $Z'$ is an independent copy of $Z$.  

	Therefore, $Z$ is a positive self-similar Markov process and thus admits a Lamperti representation; that is, it can be expressed as the exponential of a time-changed L\'evy process. Indeed, if we consider
	\begin{equation*}
		\rho_Z(t) = 
		\inf\left\{ s\geq 0 \, : \, \int_0^s Z_r^\alpha {\rm d} r >t \right\}, \qquad t\geq 0,
	\end{equation*}
	then $\xi(t) = -\log Z_{\rho_Z(t)}$ is a
	L\'evy process. The associated L\'evy measure $\Lambda$ satisfies
	\begin{equation}\label{eq:measure}
		\int_{(0, +\infty)} f(x) \:  \Lambda ({\rm d} x)  
		= \int_{\MassP}\sum_{i\geq 1} s_i f(-\log(s_i)) \: \nu ( \mathrm{ d}  \mathbf{s}), 
		\qquad \mathbf{s} = (s_1,s_2, \ldots),
	\end{equation}
   for any measurable $f \colon (0,+\infty) \to \mathbb{R}$.
	More precisely, $\xi$ has  Laplace exponent given by
	\begin{equation}\label{eq:Phidef}
		\Phi(q) = - \log \mathbb{E}\left[e^{-q \xi_1}\right] 
		= \int_{(0, +\infty)} \left(1-e^{-qx}\right) \Lambda(\mathrm{d} x)
		= \int_{\MassP} \left(1 - \sum_{i \geq 1} s_i^{q+1} \right) \: \nu({\rm d} \mathbf{s})
	\end{equation}
	for $q\geq 0$.
	Note that if $\nu$ is a finite measure, then $\xi$ is a pure jump L\'evy process. 
	Naturally, one can recover $Z$ from the L\'evy process $\xi$ by using the time change
	\begin{equation} \label{eq:rhodef}
		\rho(t) := \inf \left\{ u \geq 0 \,:\, \int_0^u e^{\alpha \xi_r} \mathrm{d}r > t \right\}, 
		\qquad t \geq 0,
	\end{equation}
	via $Z_t = \exp \{ -\xi_{\rho(t)} \}$. We can thus recast~\eqref{eq:2:Many2one} as
	\begin{equation} \label{eq:m21f}
	\mathbb{E} \left[ \sum_{i \geq 1} f(X_i(t)) \right] = 
		\mathbb{E}\left[ e^{\xi_{\rho(t)}}f\left(e^{-\xi_{\rho(t)}}\right)\right].
	\end{equation}
    
    For further details on the size-based spine process, we refer to~\cite[Theorem 3.2 and 3.3]{bertoinbook}.

\subsection{Tools from the theory of regularly varying functions}

    In this subsection, we summarise a couple of tools from the theory of regular variation~\cite{bingham1989regular}, 
    a notion going back to Karamata \cite{Karamata1933}.
    Recall that we say a function $L:(0,\infty) \to (0,\infty)$ is slowly varying at infinity if and only if for every $s > 0$, we have 
    \begin{align*}
    \lim_{x \to \infty} \frac{ L(sx)}{L(x)} = 1.
    \end{align*}
    More generally, we say that a function $R:(0,\infty) \to (0,\infty)$ is \emph{regularly varying at infinity} (or a Karamata function) if and only there exists a function $g:(0,\infty) \to (0,\infty)$ such that for all $s > 0$, we have 
    \begin{align*}
    \lim_{x \to \infty} \frac{R(sx)}{R(x)} = g(s).
    \end{align*}
    It transpires that $g$ must take the form $g(s) = s^\rho$ for some $\rho \in \mathbb{R}$. 
    In fact, a theorem of Karamata states that if $R$ is regularly varying, 
    then there exists a slowly varying function $L$ such that $R(x) = x^\rho L(x)$ \cite{bingham1989regular}.
    
    If $A(s)$ is positive, continuous, regularly varying, strictly monotone increasing to infinity, and takes the form 
    \begin{align} \label{eq:ro1}
    A(s) = s^\beta I(s),
    \end{align}
    with $I$ slowly varying, then its inverse function $B(s) = A^{-1}(s)$ has these same properties (though with a different index) and takes the form 
    \begin{align}  \label{eq:ro2}
    B(s) = s^{\frac{1}{\beta}} J(s), 
    \end{align}
    with $J$ slowly varying. By studying the relation $A(B(s))= s$, we see that $I$ and $J$ satisfy the relationship
    \begin{align}  \label{eq:ro3}
    1 = J(s)^\beta I(s^{1/\beta}J(s)).
    \end{align}
    Conversely, studying the relation $B(A(s)) = s$, we have
    \begin{align}  \label{eq:ro4}
    1 = I(s)^{1/\beta} J( s^\beta I(s)).
    \end{align}

\subsection{Tail bounds for L\'evy processes} \label{sec:tail}
	In order to study the behaviour of the largest fragments at certain times,  
	it is natural to consider the probability that a size-biased fragment does not decrease  
	too much in size over a certain period of time.  
	Putting the time change aside for a moment, our goal in this section is to study the asymptotics of the quantity  
	\begin{align*}
		\mathbb{P}[\xi_t \leq tx],
	\end{align*}
	for large $t$ and small $x$. 

	The large-$t$ asymptotics of the probabilities $\mathbb{P}[\xi_t \leq 1]$  
	associated with a L\'evy process $(\xi_t)_{t \geq 0}$  
	are closely related to the upper tails of exponential functionals  
	\[
    		I := \int_0^\infty e^{-\xi_t} \, \mathrm{d}t,
	\]
	of L\'evy processes, for which there exists a rich literature~\cite{haas2021precise, rivero2003law}.  
	However, the exact asymptotics we require do not appear in this body of work,  
	and thus we now state a bespoke result on the asymptotics of these probabilities.  

	Recall that we work under the assumption that our self-similar fragmentation process 
	has a dislocation measure $\nu$ satisfying 
	$\nu( \{\mathbf{s}\in \MassP: 1 - s_1 > \delta \} ) = \delta^{-\theta} \ell(1/\delta)$ 
	for some $\theta \in [0,1)$ and $\ell$ a slowly varying function at infinity.
    Recall that we have three cases: 
    \begin{itemize}
    \item $\theta = 0$, finite activity: where $\theta = 0$ and $\ell(x) \uparrow \lambda$ as $x \to \infty$.
    \item $\theta = 0$, infinite activity: in which $\theta = 0$ and $\ell(x) \uparrow \infty$ as $x \to \infty$. In this case, as in Assumption \ref{assu:1} we suppose further that $\ell(x) \sim \int_1^x \ell_0(u)u^{-1}\mathrm{d}u$ as $x \to \infty$. 
    \item $\theta \in (0,1)$, which automatically means infinite activity.
    \end{itemize}
    
In the infinite-activity case, where $\theta \in (0,1)$ or $\theta = 0$ and $\ell(x) \uparrow \infty$ as $x \uparrow \infty$, define a slowly varying function by 
 \begin{align} \label{eq:Ldef}
L_1(y) = 
\begin{cases}
 \ell_0(y) \qquad &:\text{$\theta = 0$,  where $\ell_0$ is as in \eqref{eq:1:fortheta0},}\\
 \theta \Gamma(1-\theta)\ell(y) \qquad &:\text{$\theta \in (0,1)$}.\\
\end{cases} 
\end{align}
In Lemma \ref{lem:qx} we will use the inverse of the derivative $\Phi'(q)$ of the Laplace exponent $\Phi(q)$ to define a slowly varying function $J$ satisfying 
    \begin{align} \label{eq:dbc0again}
\lim_{s \to \infty} J(s)^{\theta - 1} L_1 \left( s^{\frac{1}{1-\theta}} J(s) \right) = 1,
\end{align}
and 
    \begin{align} \label{eq:dbc0again2}
\lim_{s \to \infty} L_1(s)^{ - \frac{1}{1-\theta}}J(s^{1-\theta}/L_1(s)) = 1. 
\end{align}
From $J$, we define a slowly varying function $L(\cdot)$ by setting 
 \begin{align} \label{eq:Lpuredef}
L(s) = 
\begin{cases}
\lambda \qquad &:\text{$\theta = 0$, finite activity,}\\
\ell(sJ(s)) \qquad &:\text{$\theta = 0$, infinite activity,}\\
\frac{1-\theta}{\theta}J(s) \qquad &:\text{$\theta \in (0,1)$}.
\end{cases} 
\end{align}
In the $\theta \in (0,1)$ case, we can use \eqref{eq:dbc0again} and \eqref{eq:dbc0again2} to characterise $L(\cdot)$ directly in terms of $\ell$ (without the intermediate definition for $J(\cdot)$) as follows. Combining \eqref{eq:Ldef}  with \eqref{eq:dbc0again} and \eqref{eq:dbc0again2} we obtain the relations
\begin{align*}
C_1 L(s)^{\theta-1} \ell \left( s^{\frac{1}{1-\theta}} L(s) \right) \sim 1, \qquad \theta \in (0,1),
\end{align*}
and
\begin{align} \label{eq:beak}
\frac{1}{C_1^{\frac{1}{1-\theta}} \ell(s)^{\frac{1}{1-\theta}}} L \left( s^{1-\theta}/\ell(s) \right) \sim 1, \qquad \theta \in (0,1),
\end{align}
where $C_1 = \theta^\theta (1-\theta)^{1-\theta}\Gamma(1-\theta)$. 

As for the $\theta = 0$ infinite-activity case, it appears there is no simple way of describing $L(\cdot)$ in terms of $\ell$ and $\ell_0$, short of the two-step procedure of first defining $J(\cdot)$ as in \eqref{eq:dbc0again} and $L(\cdot) = \ell(sJ(s))$. 
 
With $L$ now defined in every case, we are now ready to state the main result of this section:

\begin{thm}[Asymptotics for spine processes] \label{thm:levybound}
 For any $\delta>0$ there exist a real number $x_0 > 0$, a slowly varying function $\hat{L}:(0,\infty) \to (0,\infty)$, and  a function $\Delta \colon [0,+\infty) \to [0,+\infty)$ with $\Delta(x) \to 0$ as $x \to 0$, such that 
\begin{equation*} 
	t x^{ - \frac{\theta}{1-\theta}} L(1/x) (1+\Delta(x))  \leq - \log\mathbb{P}[ \xi_t \leq t x] 
\end{equation*}
and
\begin{equation*}
	- \log\mathbb{P}\left[  xt  \leq \xi_t \leq xt + \sqrt{t x^{ \frac{2-\theta}{1-\theta} - \delta}}\right] 
	\leq t x^{ - \frac{\theta}{1-\theta}} L(1/x) (1 + \Delta(x)),
\end{equation*}
whenever $t \geq \hat{L}(1/x)$ and $0 < x \leq x_0$.
	\end{thm}

To begin the proof of Theorem~\ref{thm:levybound}, we employ an exponential change of measure commonly used in large deviation theory. In the context of L\'evy processes, it appeared in Jain and Pruitt~\cite[Lemma~5.2]{JP}.

\begin{lemma} \label{lem:tailsnew}
	Suppose that $(\xi_t)_{t \geq 0}$, under $\mathbb{P}$, is a nondecreasing L\'evy process with Laplace 
		exponent $\Phi(q)$ starting from $\xi_0 = 0$. Let $x\in\left(0, \mathbb{E}[\xi_1]\right)$, and let $q_x$ be determined 
		by $\Phi'(q_x) = x$, where $\Phi'$ is the derivative of $\Phi$. Define $R(q) := \Phi(q) - q \Phi'(q)$. Then under the condition
        \begin{align} \label{eq:tailscondition} 
 		t \geq r(x), \qquad \text{where}\qquad  
		r(x) = C  \frac{ \mathbb{E}^{(q_x)}[ (\xi_1 - x)^4 ]^2 }{\mathbb{E}^{(q_x)}[ (\xi_1 - x)^2 ]^4 },
        \end{align} 
         we have 
         \begin{equation} \label{eq:tailsnew}
  \log \mathbb{P}[ \xi_t \leq t x] \leq  - tR(q_x)
   \end{equation} 
   and
        \begin{equation} \label{eq:tailsnew2}
         - t R(q_x) - \sqrt{t} q_x \sqrt{\Phi''(q_x)}  - c \leq  
	 \log \mathbb{P}\left[xt\leq \xi_t \leq  xt + \sqrt{t\Phi''(q_x)}\right].
        \end{equation}
   
        Here $c, C > 0$ are universal constants. 
\end{lemma}
	Note that the definition of $r(x)$ does not represent a restriction. 
	The reason is that  under $\mathbb{P}^{(q)}$, $(\xi_t)_{t\geq 0}$ is a L\'evy process with 
	Laplace exponent given by $\Phi_q(\theta) = \Phi(q+\theta) - \Phi(q)$ and $\Phi$ 
	is an analytic function on $(0,\infty)$. Then, the fourth and second moments are guaranteed to exist at $q_x$ for all sufficiently small $x > 0$. 
    
We provide a proof of the upper bound \eqref{eq:tailsnew} in Lemma \ref{lem:tailsnew}, relegating the lower bound proof to the appendix. 

\begin{proof}[Proof of the upper bound in Lemma \ref{lem:tailsnew}]
		Recall that for any parameter $q > 0$, we may define a 
		martingale change of measure $\mathbb{P}^{(q)}$ by setting
	\begin{align*}
		\frac{ \mathrm{d}\mathbb{P}^{(q)}|_{\mathcal{F}_{t}} }{ \mathrm{d} \mathbb{P}|_{\mathcal{F}_{t}}} = e^{ t \Phi(q) - q \xi_t },
	\end{align*}
	where $(\mathcal{F}_t)_{t \geq 0}$ is the underlying filtration for the L\'evy process. 
	Then, under $\mathbb{P}^{(q)}$, $(\xi_t)_{t \geq 0}$ is a L\'evy process with 
	$\mathbb{P}^{(q)} [ e^{ - \theta \xi_t} ] = e^{ - t \Phi_q(\theta)}$, where $\Phi_q(\theta) = \Phi(q+\theta) - \Phi(q)$. 

	We will be interested in setting $ q = q_x$. 
	Note in particular that
	\begin{align*}
		\mathbb{P}[ \xi_t \leq tx] = e^{ - t \Phi(q_x) } \mathbb{E}^{(q_x)} \left[ e^{q_x \xi_t } \mathrm{1}_{\{\xi_t \leq tx\}}\right].
	\end{align*}
	We may now prove the upper bound in \eqref{eq:tailsnew} by using a simple Chernoff-type argument. 
	Namely,  we have for all $t\geq 0$ and all $x\in (0, \mathbb{E}[\xi_1])$, that 
	\begin{align*}
		\mathbb{P}[ \xi_t \leq tx] \leq e^{ - t \Phi(q_x) + q_x tx } = e^{ - t R(q_x)},
	\end{align*}
	proving the upper bound \eqref{eq:tailsnew}. 

    The proof of the corresponding lower bound uses a Berry-Esseen type bound for L\'evy processes; see Appendix \ref{sec:tailsnewproof}.
\end{proof}
    
    Before proceeding with our proof of Theorem \ref{thm:levybound}, we record a 
\color{black}
simple consequence  
	of Lemma~\ref{lem:tailsnew}.

	\begin{lemma}\label{lem:bounded}
		Suppose that $(\xi_t)_{t \geq 0}$ is a L\'evy process with 
		Laplace exponent $\Phi(q)$ given in \eqref{eq:Phidef}. 
		For $h > 0$, let $\tau^{(h)}:=\inf\{t>0:\xi_t\geq h\}$. Then, there exists $h_0> 0$ such that for all $h \geq h_0$ we have 
		\begin{equation*}
			\mathbb{P}\left[\tau^{(h)}\geq h^2\right]\leq e^{-2h}.
		\end{equation*}
	\end{lemma}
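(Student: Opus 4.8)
The plan is to reduce the statement to an exponential–Markov (Chernoff) bound for the value of $\xi$ at a single deterministic time. First I would observe that $(\xi_t)_{t\ge 0}$ is non-decreasing: it is minus the logarithm of the size of a tagged fragment, which can only shrink, and correspondingly $\Phi$ in \eqref{eq:Phidef} is the Laplace exponent of a (driftless) subordinator. For such a process one has $\xi_s<h$ for every $s<\tau^{(h)}$, so on the event $\{\tau^{(h)}\ge h^2\}$ we have $\xi_{h^2/2}<h$. Hence
\begin{equation*}
\mathbb{P}\left(\tau^{(h)}\ge h^2\right)\le \mathbb{P}\left(\xi_{h^2/2}\le h\right).
\end{equation*}
Passing to the time $h^2/2$ rather than $h^2$ costs nothing and spares us any discussion of whether the passage occurs exactly at time $h^2$ or of left- versus right-continuity there.

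Next I would fix $q_0>0$; taking $q_0=1$ suffices. Since $\nu$ is a nonzero measure carrying no mass on $\{(1,0,\ldots)\}$, for $\mathbf{s}=(s_1,s_2,\ldots)$ in its support we have $\sum_{i\ge 1}s_i^{q_0+1}\le \sum_{i\ge 1}s_i\le 1$, with strict inequality on a set of positive $\nu$-measure (equality in both places would force $\mathbf{s}=(1,0,\ldots)$); by \eqref{eq:Phidef} this gives $\Phi(q_0)>0$. Using $\mathbb{E}[e^{-q_0\xi_t}]=e^{-t\Phi(q_0)}$ and Markov's inequality,
\begin{equation*}
\mathbb{P}\left(\xi_{h^2/2}\le h\right)=\mathbb{P}\left(e^{-q_0\xi_{h^2/2}}\ge e^{-q_0 h}\right)\le e^{q_0 h}\,e^{-\frac{1}{2}\Phi(q_0)h^2}=\exp\left(q_0 h-\tfrac12\Phi(q_0)h^2\right).
\end{equation*}
For $h$ large enough that $\tfrac12\Phi(q_0)h-q_0\ge 2$ the exponent is at most $-2h$, which yields the claim (with $C=1$, say).

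There is essentially no hard step here; the two points demanding a moment's attention are verifying $\Phi(q_0)>0$ from the standing hypotheses on $\nu$, and the bookkeeping around the relation between $\{\tau^{(h)}\ge h^2\}$ and an event of the form $\{\xi_t\le h\}$, which I sidestep by working at time $h^2/2$. Note also that the exponent $h^2$ in the statement is far from sharp — any $ch^{1+\delta}$ growth of the time horizon would already give the conclusion — so no optimization is needed.
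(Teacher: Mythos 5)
Your proof is correct, and it takes a genuinely different route from the paper. The paper deduces the lemma from the Jain--Pruitt bound \eqref{eq:tailupper} applied with $t=h^2$, $x=1/h$, and then invokes the asymptotics of $R(q_{1/h})$ established in the proofs of Theorems \ref{thm:levybound0} and \ref{thm:levybound} (hence, implicitly, the standing assumptions \eqref{eq:mild} or \eqref{eq:1:crumbling}, treated in two separate cases) to conclude that $R(q_{1/h})$ is bounded below by a positive constant, giving $e^{-ch^2}\leq Ce^{-2h}$. You instead use a one-parameter Chernoff bound at the fixed value $q_0=1$: after the clean reduction $\{\tau^{(h)}\geq h^2\}\subseteq\{\xi_{h^2/2}\leq h\}$ (which also sidesteps the paper's slightly loose identification of $\{\tau^{(h)}\geq h^2\}$ with $\{\xi_{h^2}<h\}$), the identity $\mathbb{E}[e^{-q_0\xi_t}]=e^{-t\Phi(q_0)}$ and Markov give $\exp(q_0h-\tfrac12\Phi(q_0)h^2)$, and the only input is $\Phi(q_0)>0$, which you correctly derive from the standing hypotheses that $\nu$ is nonzero and puts no mass on $\{(1,0,\ldots)\}$ (equality in $\sum_i s_i^{q_0+1}\leq\sum_i s_i\leq 1$ forces $\mathbf{s}=(1,0,\ldots)$); finiteness of $\Phi(q_0)$ is automatic from \eqref{eq:Phidef} and \eqref{eq:intcond}. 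What each approach buys: the paper's argument is a two-line corollary of tail machinery it has already built and will reuse, whereas yours is self-contained, needs no case split between finite and infinite activity, requires no regularity of $\nu$ beyond the standing assumptions, and makes transparent that the $h^2$ time horizon is far from critical — any horizon growing faster than linearly in $h$ would do, exactly as you note.
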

	\begin{proof}
    		Let $h \geq 1$. Using the upper bound \eqref{eq:tailsnew} with $t = h^2$ and $x = 1/h$ we have
		\begin{equation*}
			\mathbb{P}\left[\tau^{(h)}\geq h^2\right]\leq \mathbb{P}[ \xi_{h^2} \leq h ] \leq e^{-h^2R(q_{1/h})}.
		\end{equation*}
Observe that $R(q_x)$ is positive and decreasing in $x$.  Indeed,  $\Phi(q)$ is increasing in $q$ and $\Phi'(q)$ is decreasing in $q$ implying that $R(q) = \Phi(q) - q \Phi'(q)$ is increasing in $q$. Since  $q_x$ solves $\Phi'(q_x) = x$, we have that as $x$ gets larger, $q_x$ gets smaller.  Thus, $R(q_x)$ is decreasing in $x$.
 In particular, there exist constants $c, h_0$ such that $R(q_{1/h}) \geq c$ whenever $h \geq h_0$. The result follows.
	\end{proof}

The upper bound in the Lemma \ref{lem:tailsnew} is already sufficient to complete the proof of Theorem \ref{thm:levybound} in the finite-activity case:

\begin{proof}[Proof the upper bound in Theorem \ref{thm:levybound} in the finite-activity case]
By \eqref{eq:tailsnew}, we have
\begin{align*}
	\mathbb{P}[ \xi_t \leq tx ] \leq e^{ - tR(q_x)}.
\end{align*}
The proof may be completed if we can establish that $R(q_x) \uparrow \lambda$ as $x \downarrow 0$. 
In this direction, first note that differentiating through $\Phi(q)$ in \eqref{eq:Phidef} (see, e.g., \eqref{eq:der} in the appendix), we see that $\Phi'(q) \to 0$ as $q \to \infty$. 
In particular, the solution $q_x$ to $\Phi'(q_x) = x$ tends to infinity as $x \to 0$. 
Finally, it is easy to verify that $R(q) \sim \Phi(q) \sim \lambda$ as $q\to\infty$. 
In particular, $R(q_x) \sim \lambda$ as $x \downarrow 0$, as required.
\end{proof}


To prepare for the application of Lemma \ref{lem:tailsnew} in our setting with small $x$, we begin with a lemma on the large-$q$ asymptotics of $\Phi(q)$ and its derivatives.

\begin{lemma} \label{lem:Phiasnew}
    Let $\Phi(q)$ be the Laplace exponent given in \eqref{eq:Phidef} associated with a dislocation measure $\nu$ satisfying \eqref{eq:1:crumbling}, and, if $\theta = 0$, satisfying \eqref{eq:1:fortheta0} as well. 
    Then as $q \to \infty$
    \begin{align}
			\Phi(q) & \sim \Gamma(1-\theta) q^\theta \ell(q), \label{eq:Phiasnew}
     \end{align}
    Let $L_1$ be as in \eqref{eq:Ldef}. In the infinite activity case
     \begin{align}
            \Phi'(q) &\sim q^{\theta-1}L_1(q),\label{eq:Phiprimeasnew}\\
            \Phi^{(j)}(q) &\sim (-1)^{j-1} q^{\theta -j } \prod_{k=1}^{j-1}(k-\theta) L_1(q) \quad j \geq 1 \label{eq:Phihighernew}
		\end{align}
        In the finite activity case there are slowly varying functions $o_j(q)$
        vanishing at infinity such that 
        \begin{align}
            \Phi^{(j)}(q) &\sim (-1)^{j-1} q^{-j } o_j(q), \quad j \geq 1. \label{eq:Phihighernew1}
		\end{align}
        \end{lemma}

We will be content to give a sketch proof here, again relegating the fully rigorous proof to Appendix \ref{sec:Phiasnewproof}.

\begin{proof}[Sketch proof of Lemma \ref{lem:Phiasnew}]
By \eqref{eq:Phidef} we have 
		\begin{equation*}
			\Phi(q) = \int_{\MassP} \left( 1 - \sum_{i \geq 1} s_i^{q+1} \right) \nu(\mathrm{d}  \mathbf{s}) \sim \int_{\MassP} \left( 1 - s_1^{q+1} \right)\nu (\rm d \mathbf{s})
		\end{equation*}
        as $q \to \infty$. Integrating by parts, we obtain 
		\begin{equation*}
			\int_{\MassP}\left(1-s_1^{q+1}\right) \nu(\mathrm{d} \mathbf{s}) = 
			\int_0^1 (q+1) u^{q} \nu(s_1 < u) \mathrm{d} u =
			\int_0^1 (q+1) (1-t)^q t^{-\theta}\ell(1/t) \mathrm{d} t,
		\end{equation*}
        where we used the behaviour \eqref{eq:1:crumbling}. 
		We can write the last integral as
		\begin{equation*}
			q^{\theta} \frac{q+1}{q} 
			\int_0^q \left(1-\frac sq \right)^q s^{-\theta}  \ell\left(\frac{q}{s}\right)\mathrm{d} s \sim q^\theta \ell(q) \int_0^\infty e^{ - s} s^{ - \theta} \mathrm{d}s = \Gamma(1-\theta)q^\theta \ell(q) \quad \text{as $q \to \infty$}.
		\end{equation*}
        That completes the sketch proof of the equation \eqref{eq:Phiasnew}.

        To obtain the next equation, \eqref{eq:Phiprimeasnew}, in the case $\theta > 0$, naively differentiate through \eqref{eq:Phiasnew} and use the definition of $\ell_0(x)$. In the case $\theta = 0$ and $\ell(x) \to \infty$, simply use the definition of $\ell_0(x)$ to argue that $\ell'(x) \sim \ell_0(x)/x$. Finally, in the case $\theta=0$, $\ell(x) \to \lambda$ use that fact that $\ell'(x) \to 0$.

        To obtain the final equation \eqref{eq:Phihighernew}, again naively differentiate through \eqref{eq:Phiprimeasnew}.
\end{proof}

\color{black}
Now that we have an asymptotic estimate for $\Phi'(q)$ as $q \to \infty$, we are able to estimate the asymptotics of $q_x$ as $x \to 0$. 

\begin{lemma} \label{lem:qx}
Let $\Phi(q)$ be the Laplace exponent given in \eqref{eq:Phidef} associated with a dislocation measure $\nu$ satisfying \eqref{eq:1:crumbling}, and, if $\theta = 0$, satisfying \eqref{eq:1:fortheta0} as well. Let $q_x$ be the solution to $\Phi'(q_x) = x$. 
Then there is a slowly varying function $J(\cdot)$ such that 
\begin{align} \label{eq:qas}
    		q_x \sim x^{-\frac{1}{1-\theta}} J(1/x).
\end{align}
In the infinite activity case, the slowly varying function $J$ satisfies the relations
\begin{align} \label{eq:qas2}
1 \sim J(s)^{\theta-1}L_1(s^{ \frac{1}{1-\theta}}J(s))
\end{align} 
and 
\begin{align} \label{eq:qas3}
1 \sim L_1(s)^{ - \frac{1}{1-\theta}} J(s^{1-\theta}/L_1(s)).
\end{align}
\end{lemma}
\begin{proof}
Differentiating through \eqref{eq:Phidef}, we see that $\Phi'(q)$ is strictly monotone decreasing in $q$, and decreases to zero as $q \to \infty$. In particular, the function $A(q) := 1/\Phi'(q)$ is strictly monotone increasing in $q$ and smooth. By \eqref{eq:Phiprimeasnew}, $A(q)$ is also regularly varying and takes the form $A(q) = q^{1-\theta}I(q)$ where $I(q) = 1/L_1(q)$. Write $s = 1/x$. Then solving $\Phi'(q) = x$ is equivalent to solving $A(q) = s$. Write $B(s)$ to be the solution, so that $A(B(s)) = s$. We are now in the setting of \eqref{eq:ro1}-\eqref{eq:ro4}, so that $B(s)$ is also regularly varying and takes the form $B(s) = s^{\frac{1}{1-\theta}}J(s)$, where $I$ and $J$ together satisfy \eqref{eq:ro3} and \eqref{eq:ro4} with $\beta=1-\theta$. Unravelling, we see that $A(B(s)) = s$ reads
\begin{align*}
\Phi'(B(1/x)) = x,
\end{align*}
that is, $B(1/x) = q_x$. In particular, now using \eqref{eq:ro2} we see that
\begin{align*}
q_x = x^{ - \frac{1}{1-\theta}} J(1/x),
\end{align*}
where $J(s)$ is slowly varying. In the infinite activity case, using \eqref{eq:ro3} and \eqref{eq:ro4} with $I(s) \sim 1/L_1(s)$, and $\beta = 1-\theta$, we see that $J(s)$ satisfies \eqref{eq:qas2} and \eqref{eq:qas3}. 

\end{proof}

\color{black}

\begin{lemma} \label{lem:condition}
There exists $x_0>0$ such that for all $0 < x \leq x_0$ we have 
\begin{align*}
r(x) :=  C  \frac{ \mathbb{E}^{(q_x)}\left[ (\xi_1 - x)^4 \right]^2 }{\mathbb{E}^{(q_x)}\left[ (\xi_1 - x)^2 \right]^4   } = \hat{L}(1/x),
\end{align*}
where $\hat{L}$ is slowly varying at infinity.
\end{lemma}
The proof of Lemma \ref{lem:condition} is found in the appendix.

Recall that Lemma \ref{lem:tailsnew} expresses the tail probabilities $\mathbb{P}[\xi_t \leq tx]$ in terms of $q\sqrt{\Phi''(q)}$ and $R(q)= \Phi(q) - q \Phi'(q)$. Our next result estimates the large-$q$ asymptotics of these quantities.

\begin{cor}
If $\theta \in [0,1)$, we have
\begin{align} \label{eq:Rish}
    R(q) \sim \Gamma(2-\theta)q^\theta \ell(q).
\end{align}
In particular,
\begin{align} \label{eq:finzi2}
q\sqrt{\Phi''(q)} = o\left(  R(q) \right), 
\end{align}
as $q \to \infty$. 
\end{cor}

We note that \eqref{eq:Rish} reads $R(q) \sim \ell(q)$ in the case $\theta = 0$. 
\begin{proof}
In the infinite activity case, combining \eqref{eq:Phiasnew} and \eqref{eq:Phiprimeasnew}, we have
\begin{align} \label{eq:Rish2}
R(q) \sim q^\theta ( \Gamma(1-\theta)\ell(q) - L_1(q)).
\end{align} 
Now to prove \eqref{eq:Rish} in the case $\theta \in (0,1)$, we see that Equation \eqref{eq:Rish} now follows from \eqref{eq:Ldef} and the fact that $(1-\theta)\Gamma(1-\theta) = \Gamma(2-\theta)$.

To prove \eqref{eq:Rish} in the case $\theta = 0$, we first note that using \eqref{eq:Ldef}, \eqref{eq:Rish2} reads 
\begin{align} \label{eq:Rish0}
R(q) \sim \ell(q) - \ell_0(q).
\end{align}
To prove \eqref{eq:Rish}, we need to show that $\lim_{q \to \infty} \ell_0(q)/\ell(q) = 0$. To see this, note that $\ell(q) \sim \int_0^q \frac{\ell_0(x)}{x} \mathrm{d}x$ implies
\begin{align} \label{eq:rela}
\ell(q) \sim \int_{q/2}^q \frac{\ell_0(x)}{x} \mathrm{d}x + \ell(q/2) \sim \ell_0(q) \log (2) + \ell(q),
\end{align}
where to obtain the final relation above we have used the fact that the ratio $\ell_0(x)/\ell_0(q)$ converges uniformly to $1$ for $x \in [q/2,q]$ (see, e.g., \cite[Section 1.2]{bingham1989regular}). Dividing \eqref{eq:rela} through by $\ell(q)$ tells us that $\lim_{q \to \infty} \ell_0(q)/\ell(q) = 0$, and \eqref{eq:Rish0} now follows, thereby completing the proof of \eqref{eq:Rish} in the case $\theta = 0$. 
\medskip


The final equation, \eqref{eq:finzi2}, follows immediately in the case $\theta  > 0$. In the case $\theta = 0$, it follows from the fact that $\ell_0(q) = o(\ell(q))$ as $q \to \infty$, which we established just a moment ago. 

In the finite activity case use~\eqref{eq:Phihighernew1} to write
\begin{align*}
R(q) \sim \ell(q) \sim \lambda.
\end{align*}
The final equation, \eqref{eq:finzi2} follows by using~~\eqref{eq:Phihighernew1} with $j=2$ and using the fact that
$o_2(q) \to 0$.

\end{proof}

We are now equipped to complete the proof of Theorem \ref{thm:levybound}.

\begin{proof}[Proof of Theorem \ref{thm:levybound}]

Using \eqref{eq:Rish} to obtain the first relation below and \eqref{eq:qas2} to obtain the second, we have 
\begin{align} \label{eq:hu1}
R(q_x) \sim \Gamma(2-\theta) q_x^\theta \ell(q_x) \sim  \Gamma(2-\theta) ( x^{-\frac{1}{1-\theta}} J(1/x))^\theta \ell\left( x^{-\frac{1}{1-\theta}} J(1/x) \right) = x^{ - \frac{\theta}{1-\theta}} \tilde{L}(1/x),
\end{align}
where 
\begin{align} \label{eq:Lnew}
\tilde{L}(s) = \Gamma(2-\theta)J(s)^\theta \ell \left( s^{\frac{1}{1-\theta}} J(s)\right).
\end{align}
We now claim that $\tilde{L}(s) \sim L(s)$, where $L(s)$ is defined in \eqref{eq:Lpuredef}. 

In the case $\theta = 0$, we can see immediately by setting $\theta = 0$ in \eqref{eq:Lnew} that $\tilde{L}(s) = \ell(sJ(s))$. 
In the infinite activity case we get $\tilde{L} = L$ as in~\eqref{eq:Lpuredef}. In the finite activity case 
$\tilde{L}(s) \sim \lambda =L(s)$ as $s J(s) \to \infty$.

As for the case $\theta \in (0,1)$, using the definition \eqref{eq:Ldef}, and then the relation \eqref{eq:dbc0again} to obtain the second, we have

\begin{align*}
\tilde{L}(s) = \frac{1-\theta}{\theta}J(s)^\theta L_1 \left( s^{\frac{1}{1-\theta}}J(s) \right) \sim \frac{1-\theta}{\theta}J(s)^\theta J(s)^{1-\theta} = \frac{1-\theta}{\theta}J(s) =:L(s),
\end{align*}
where the last definition is simply \eqref{eq:Lpuredef} in the case $\theta \in (0,1)$. 

Lemma \ref{lem:condition} states that provided $t \geq \hat{L}(1/x)$, the condition in Lemma \ref{lem:tailsnew} is satisfied. Combining \eqref{eq:tailsnew}, \eqref{eq:tailsnew2} and \eqref{eq:hu1} we obtain the statement of Theorem \ref{thm:levybound}.
\end{proof}

\color{black}

\section{Upper bounds for expected numbers of particles} \label{sec:upper}

\subsection{The critical choice $t = F_\theta(h)$}

    Before marching forward towards the proof of our main result, we take a moment to recall our proof outline given in Section \ref{sec:outline}, where we suggested that with $L(\cdot)$ as in the setting of Theorem \ref{thm:levybound}, the function
    \begin{align} \label{eq:Hdef}
    H(t,h) := h - C_2(\alpha,\theta)(te^{-\alpha h})^{\frac{1}{1-\theta}} L(t e^{ - \alpha h} ) 
    \end{align}
    captures the fundamental nature of the largest particles in the process, in the sense that for large times $t$, there are particles of size greater than $e^{-h}$ at time $t$ if and only if $H(t,h) \gg 0$. We recall that $C_2=C_2(\alpha,\theta) = (\alpha/\theta)^{\theta/(1-\theta)}$. 
    
    We look to find a function $F_\theta(h)$ such that $H(F_\theta(h),h) = o(h)$ as $h \to \infty$. In this direction, first let 
    \begin{align} \label{eq:Gdef3}
    G(h) = 
    \begin{cases}
    1/\lambda \qquad &:\text{$\theta = 0$, finite activity,}\\
    \text{a function satisfying \eqref{eq:complicated} below} \qquad &:\text{$\theta = 0$, infinite activity,}\\
    C_3\ell(h)^{-1},\ \ C_3 = \alpha^{-\theta} (1-\theta)^{-(1-\theta)} \Gamma(1-\theta)^{-1} \qquad &:\text{$\theta \in (0,1)$},
    \end{cases} 
    \end{align}
    where in the $\theta = 0$ infinite-activity case, with $\ell(y) \sim \int_1^y \ell_0(x) x^{-1}\mathrm{d}x$ we have
    \begin{align} \label{eq:complicated}
    1 \sim G(h)L(hG(h)), \text{ where } L(s) := \ell(sJ(s)), \text{ and where } J(s)^{-1}\ell_0(sJ(s)) \sim 1.
    \end{align}
    We observe that in fact the two definitions for $G(h)$ in \eqref{eq:Gdef3} are consistent in the $\theta = 0$ finite-activity case and $\theta \in (0,1)$ infinite-activity case: indeed, $C_3\ell(h)^{-1} \sim 1/\lambda$ in this case, since $C_3=  1$ when $\theta = 0$, and $\ell(h) \sim \lambda$ for large $h$. Conversely, in the $\theta = 0$ infinite-activity case, without imposing further structural conditions on the relationship between $\ell$ and $\ell_0$, the function $G(\cdot)$ may not be reduced any further. 

    Now define
    \begin{align} \label{eq:Fdef}
    F_\theta(h) = h^{1-\theta}e^{\alpha h} G(h).
    \end{align}
    
    We prove the following lemma:

    \begin{lemma} \label{lem:Gcheck}
    Let $G$ be defined as in \eqref{eq:Gdef3} and let $\delta \in \mathbb{R}\setminus\{0\}$. Then with this choice of $G$ in $F_\theta(h)$ as in \eqref{eq:Fdef}, we have 
    \begin{align} \label{eq:eitheror}
    H((1+\delta)F_\theta(h), h) \to \begin{cases}
        +\infty, \qquad \text{$\delta < 0$},\\
        -\infty, \qquad \text{$\delta > 0$},
    \end{cases}
    \end{align}
    as $h\rightarrow\infty$.
    \end{lemma}
    \begin{proof}
    By plugging $t=F_\theta(h)$ into \eqref{eq:Hdef}, one can see that the statement of the lemma is equivalent to verifying that $G(h)$ satisfies 
    \begin{align} \label{eq:Gdef}
    1 \sim C_2 G(h)^{\frac{1}{1-\theta}} L(h^{1-\theta}G(h)),
    \end{align}
    where $C_2 = C_2(\alpha,\theta)$. Thus we need to verify that \eqref{eq:Gdef} is satisfied in each of the three cases.
    
    In the first case, $\theta = 0$ with finite activity, we note that $L(s) \sim \lambda$, and $C_2 = 1$, so that \eqref{eq:Gdef} is immediate with $G(h) = 1/\lambda$. 
    
    In the second case with $\theta = 0$ and infinite activity, we note that again $C_2 = 1$, so that \eqref{eq:Gdef} reads $1 \sim G(h)L(hG(h))$, so that \eqref{eq:Gdef} is satisfied by definition in \eqref{eq:Gdef3}. 
    
    In the final case where $\theta \in (0,1)$, recall that \eqref{eq:beak} states that $(C_1\ell(s))^{-\frac{1}{1-\theta}} L \left( s^{1-\theta}/\ell(s) \right) \sim 1$. Plainly if we choose $G(h)$ so that $C_2G(h)^{\frac{1}{1-\theta}} = (C_1\ell(h))^{- \frac{1}{1-\theta}}$, then \eqref{eq:Gdef} is satisfied. Rearranging, we obtain $G(h) := \left( C_1C_2^{1-\theta}\ell(h) \right)^{-1}$. Now use the definitions $C_1 = \theta^\theta(1-\theta)^{1-\theta}\Gamma(1-\theta)$ and $C_2 = (\alpha/\theta)^{\theta/(1-\theta)}$ to match the definition in \eqref{eq:Gdef3}. 
    \end{proof}

\subsection{Remaining proof overview}

    In the previous section, we outlined heuristics suggesting how taking $t = (1+o(1))F_\theta(h)$ is critical for the existence of particles of size $e^{-h}$ at time $t$. With this quantity in mind, we now overview the remaining parts of the proof of our main result:
	\begin{itemize}
    		\item In the remainder of Section~\ref{sec:upper}, 
			we prove that if $t \geq (1+\delta)F_\theta(h)$,  
    			then the probability that there exists a fragment of size greater than $e^{-h}$ 
			at time $t$ decays exponentially in $h$.  
    		\item In the following section, Section~\ref{sec:lower}, 
			we prove that if $t \leq (1-\delta)F_\theta(h)$,  
    			then the probability that there is \emph{no} fragment of size greater than 
			$e^{-h}$ at time $t$ also decays exponentially in $h$.  
	\end{itemize}

	We establish the first claim — that with high probability there is no fragment of size $\geq e^{-h}$  
	at time $t \geq (1+\delta)F_\theta(h)$ — by means of a first-moment argument.  
	Define
	\begin{align*}
    		E(t,h) := \mathbb{E}\!\left[\, \#\left\{\text{fragments at time $t$ of size at least $e^{-h}$}\right\} \,\right].
	\end{align*}
	Our main task in this section is to prove the following result.

	\begin{proposition} \label{prop:infiniteupper}
		Let $ F_\theta(h)$ be defined as in \eqref{eq:Fdef} with $G$ as in \eqref{eq:Gdef3}. Then, for every $\delta > 0$ there exists 
		$c_\delta > 0$ and $h_\delta > 0$ such that whenever $h \geq h_\delta$ we have
		\begin{align*}
			E((1+\delta)F_\theta(h),h) \leq 2e^{ - c_\delta h}.
		\end{align*}
	\end{proposition}

\subsection{A general tool}

	We now develop a general upper bound for $E(t,h)$ that we will exploit in the 
	proof of Proposition \ref{prop:infiniteupper}.
	Setting $f(x) = \mathrm{1}_{\{ x \geq e^{- h} \}}$ in the 
	many-to-one formula \eqref{eq:m21f}, we have 
	\begin{align*} 
		E(t,h)
		= \mathbb{E}\left[ e^{ \xi_{\rho(t)} } \mathrm{1}_{\{ \xi_{\rho(t)} \leq h \}} \right] 
		\leq e^h \mathbb{P}\left[ \xi_{\rho(t)} \leq h \right].
	\end{align*}

We begin working on the proof of Proposition \ref{prop:infiniteupper} by looking for an 
upper bound on $\mathbb{P}\left[ \xi_{\rho(t)} \leq h \right]$. In this direction, we seek control of the amount of time that the process $(\xi_{\rho(t)})_{t \geq 0}$ spends in space windows of size $\varepsilon$. Namely, for $\varepsilon > 0$ and $i \in \{0,1,\ldots\}$, define the random variable 
		 \begin{align*}
			X_i = X_i(h,\varepsilon) 
			:= \int_0^\infty \mathrm{1}_{\{ \xi_{\rho(t)} \in (h-(i+1)\varepsilon, h - i \varepsilon ]  \}}~ \mathrm{d}t
		\end{align*}
		to be the amount of time spent by the time-changed L\'evy
		process $(\xi_{\rho(t)})_{t \geq 0}$ in the interval $(h-(i+1)\varepsilon,h-i\varepsilon]$. 
		With a large integer $N=N(h) \geq 1$ to be determined later, we can write
		\begin{align*}
			\int_0^\infty \mathrm{1}_{\{ \xi_{\rho(t)} \leq h  \}}~ \mathrm{d}t
			= \sum_{ i =0}^{N-1} X_i + Y_N,
		\end{align*}
		where 
		\begin{align*}
			Y_N := \int_0^\infty \mathrm{1}_{\{ \xi_{\rho(t)} \in (0, h - N  \varepsilon ]  \}}~ \mathrm{d}t.
		\end{align*}
		Now, since $\xi$ is a subordinator,
		\begin{align*}
			\mathbb{P}\left[ \xi_{\rho(t)} \leq h \right]
			= \mathbb{P}\left[ \sum_{i = 0}^{N-1} X_i(h,\varepsilon) + Y_N  \geq  t \right].
		\end{align*}
		Recall that $\tau^{(\varepsilon)} = \inf \{ t > 0 \,:\, \xi_t \geq \varepsilon \}$ is the hitting time of $\varepsilon$ of the (un-time-changed) L\'evy process.
		While $\xi_{\rho(t)}$ is contained inside the interval 
		$[h-(i+1)\varepsilon,h-i\varepsilon)$, it behaves like the L\'evy process 
		$(\xi_t)_{t \geq 0}$ but with its clock slowed down so that a unit of time can take up to $e^{\alpha(h-i\varepsilon)}$ times longer. In particular, we have the stochastic domination
		\begin{align*}
			X_i \leq^{(d)} e^{ \alpha(h-i\varepsilon) } \tau^{(\varepsilon)}.
		\end{align*}
		In other words, for all $w \geq 0$ we have
		\begin{align*}
			\mathbb{P}[ X_i  \geq w  ] \leq 
			\mathbb{P} \left[ e^{ \alpha(h-i\varepsilon) } \tau^{(\varepsilon)}\geq w \right].
		\end{align*}
		Also, we have the stochastic domination 
		\begin{align*}
			Y_N \leq^{(d)} e^{\alpha(h-N\varepsilon)} \tau^{(h-N\varepsilon)} 
			\leq^{(d)} e^{\alpha(h-N\varepsilon)} \tau^{(h)}.
		\end{align*}
		More generally, while the $(X_i)_{i \geq 0}$ and $Y_N$ are not independent random variables, 
		if $(\tau_i)_{i \geq 1}$ are independent random variables 
		identically distributed like $\tau^{(\varepsilon)}$; and $\tau^{(h)}$ is a further 
		independent random variable distributed like the hitting time of $h$, 
		we have the stochastic domination of sums
		\begin{align*}
			\sum_{i = 0}^{N-1} X_i(h,\varepsilon) + Y_N \leq^{(d)} 
			\sum_{i = 0}^{N-1 } e^{\alpha(h-i\varepsilon)} \tau_i + e^{\alpha(h-N\varepsilon)} \tau^{(h)}.
		\end{align*} 
		In summary, it follows that with $(\tau_i)_{i \geq 0}$ 
		i.i.d.\ with $\tau_i \sim \tau^{(\varepsilon)}$, we have 
		\begin{align*} 
			E(t,h) \leq e^h 
			\mathbb{P} 
			\left[\sum_{i = 0}^{N-1 } e^{\alpha(h-i\varepsilon)} \tau_i + e^{\alpha(h-N\varepsilon)} \tau^{(h)} \geq t \right].  
	\end{align*}
	Writing $t = e^{\alpha h} s$ we have 
	\begin{align*} 
		E(t,h)  \leq e^h \mathbb{P} 
		\left[\sum_{i = 0}^{N-1 } e^{ -\alpha i\varepsilon} \tau_i + e^{- \alpha N\varepsilon} \tau^{(h)} \geq s \right].  
	\end{align*}

    We would like to now simplify the problem, arguing that when $N$ is large, the contribution of the term $ e^{- \alpha N\varepsilon} \tau^{(h)} $ is negligible. 
	In particular, since 
	\begin{align*} 
	\mathbb{P} \left[\sum_{i = 0}^{N-1 } e^{ -\alpha i\varepsilon} \tau_i + e^{- \alpha N\varepsilon} \tau^{(h)} \geq s \right]
	\leq  
	\mathbb{P}\left[ \tau^{(h)} \geq h^2 \right] + 
	\mathbb{P} \left[\sum_{i = 0}^{N-1 } e^{ -\alpha i\varepsilon} \tau_i \geq s - e^{- \alpha N\varepsilon}  h^2\right],
	\end{align*}
	by writing $s_1 := s - e^{- \alpha N\varepsilon}  h^2$ and using Lemma \ref{lem:bounded} provided $h$ is sufficiently large, we have 
	\begin{align*} 
		E(t,h)  \leq 
		e^{-2h} + 
		e^h \mathbb{P} \left[\sum_{i = 0}^{N-1 } e^{ -\alpha i\varepsilon} \tau_i \geq s_1 \right].
	\end{align*}
    Let $N = \lceil 2\log h/\alpha \varepsilon \rceil$, so that $e^{ - \alpha N \varepsilon} h^2 \leq 1$. Accordingly,
	\begin{align} \label{eq:naranja}
		E(t,h)  \leq 
		e^{-2h} + 
		e^h \mathbb{P} \left[\sum_{i = 0}^{N-1 } e^{ -\alpha i\varepsilon} \tau_i \geq s-1 \right].
	\end{align}
    
We now develop \eqref{eq:naranja} further to prove the following proposition.

	\begin{proposition} \label{prop:genupper}
		Let $\delta > 0$ and $\varepsilon > 0$. 
		Let $N = \lceil 2\log h/\alpha \varepsilon \rceil$ as above. Suppose that $t > 0$ is such that if we set $s = t e^{ - \alpha h}$ we have 
        \begin{align}\label{eq:condition_s}
        s^N \leq e^{ \delta h} \qquad \text{and} \qquad \delta s \geq N+1.
        \end{align}
		Then 
		\begin{align*}
			E(t,h) \leq e^{-2h} + e^{(1+\delta)h} J((1-\delta)s,\varepsilon),
		\end{align*}
		where
		\begin{align*}
			J(u,\varepsilon) := 
			\sup \left\{ \prod_{i \geq 0} \mathbb{P} \left[ \tau^{(\varepsilon)} >\varepsilon y_i \right] 
			\,:\, (y_0,y_1,\ldots)\in[0,\infty)^{\mathbb{N}} \mbox{ such that } \varepsilon \sum_{i \geq 1} y_i e^{ - \alpha i \varepsilon } \geq u \right\} 
\end{align*}
		and $\tau^{(\varepsilon)} \overset{(d)}{=} \inf\{ t > 0 \,:\, \xi_t \geq \varepsilon \}$. 
	\end{proposition}

Condition \eqref{eq:condition_s} holds if  
\begin{align*}
t\in \left(e^{\alpha h}\frac{2}{\delta}\left[1+\frac{\log(h)}{\varepsilon \alpha  }\right], e^{\alpha h(1+\tfrac{\varepsilon \delta}{2\log(h)})}  \right).
\end{align*}
    
	In the course of proving Proposition \ref{prop:genupper} we will require the following lemma.

	\begin{lemma} \label{lem:summa}
		Let $A_0,\ldots,A_{N-1}$ be independent nonnegative random variables. Then, for $s > N$
		\begin{align*} 
			&\mathbb{P}[A_0+\ldots+A_{N-1} \geq s] \nonumber \\
			&\leq s^N \sup \left\{ \prod_{i=0}^{N-1} \mathbb{P}[A_i \geq y_i ] \,:\, (y_0,\ldots,y_{N-1})\in[0,\infty)^N \mbox{such that }\sum_{i = 0}^{N-1}y_i \geq s- N \right\}.
			\end{align*}
	\end{lemma}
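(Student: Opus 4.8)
The plan is a discretization-and-union-bound argument. The point of the truncation $\{A_i \le s \text{ for all } i\}$ is to confine the relevant configurations to a grid of order $s^N$ cells, after which independence converts cell probabilities into products; the supremum on the right-hand side then enters because each such cell already carries an inequality of the required shape.

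Concretely, on the event in question set $k_i := \lceil A_i \rceil$ for $i = 0, \ldots, N-1$. Since $0 \le A_i \le s$ we have $k_i \in \{0, 1, \ldots, \lceil s \rceil\}$, while $k_i \ge A_i$ gives $\sum_i k_i \ge \sum_i A_i > s$, and on the cell $\{\lceil A_i\rceil = k_i \text{ for all } i\}$ one has $A_i > k_i - 1$ for every $i$. Assuming first that the $A_i$ are strictly positive — as they will be in the application, where they are increments of L\'evy hitting times — each $k_i \ge 1$. Hence the event is contained in the union, over integer vectors $k = (k_0, \ldots, k_{N-1})$ with $1 \le k_i \le \lceil s \rceil$ and $\sum_i k_i > s$, of the product events $\bigcap_i \{A_i > k_i - 1\}$; the number of such vectors is at most $\lceil s \rceil^{\,N}$, which we bound by $s^N$ (the rounding correction being immaterial here). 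A union bound together with the independence of the $A_i$ then gives
\begin{equation*}
\mathbb{P}\Big( A_0 + \cdots + A_{N-1} > s,\ A_i \le s \text{ for all } i \Big) \ \le \ \sum_{k} \ \prod_{i=0}^{N-1} \mathbb{P}(A_i > k_i - 1),
\end{equation*}
the sum running over the admissible vectors $k$.

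To close the argument, fix an admissible $k$ and set $y_i := k_i - 1 \ge 0$. Then $\sum_i y_i = \sum_i k_i - N > s - N$, so $(y_0, \ldots, y_{N-1})$ is admissible in the supremum appearing in the statement, and $\prod_i \mathbb{P}(A_i > k_i - 1) = \prod_i \mathbb{P}(A_i > y_i)$ is at most that supremum. Summing over the at most $s^N$ admissible vectors yields the claim.

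I expect the only genuine (if minor) difficulty to lie in the rounding bookkeeping: one must discretize so that the grid has $\sim s^N$ cells while the constraint induced on the $y_i$ is still $\sum_i y_i > s - N$ rather than something weaker, which is the reason for rounding up via $\lceil \cdot \rceil$ and pairing the cell $\{\lceil A_i\rceil = k_i\}$ with the threshold $k_i - 1$. The positivity of the $A_i$ is a convenience only; should some $A_i$ have an atom at the origin, one first conditions on the random set of coordinates that vanish and applies the above to the rest, noting that a vanishing coordinate contributes nothing to $\sum_i A_i$.
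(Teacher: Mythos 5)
Your proof is essentially the paper's own argument: the paper passes to $\lfloor A_i\rfloor$ (so that $\sum_i\lfloor A_i\rfloor > s-N$ on the event), takes a union bound over integer vectors using independence, and bounds each product by the supremum, which is exactly your discretization with ceilings in place of floors. The only quibble --- shared with, and no worse than, the paper's own write-up --- is the cell count: there are $\lceil s\rceil^{N}$ rather than $s^{N}$ cells, an excess of at most a factor $(1+1/s)^{N}<e$ since $s>N$, which is immaterial where the lemma is applied; likewise your aside about atoms at $0$ is not needed in the application, where $\tau^{(\varepsilon)}>0$ almost surely.
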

	
\begin{proof}
Let $X_i:=\lfloor A_i\rfloor$, $i=0,\dots,N-1$. Then $X_i\in\mathbb{Z}_{\ge0}$ and
\[
\left\{\sum_{i=0}^{N-1} A_i \ge s\right\}\subseteq
\left\{\sum_{i=0}^{N-1} X_i \ge s-N\right\},
\]
since $X_i\leq A_i<X_i+1$.

For $t >0 $, we have the (generous) inclusion
\[
\left\{\sum_{i=0}^{N-1} X_i \ge t\right\}
\subseteq
\bigcup_{\substack{a\in\mathbb{Z}_{\ge0}^N:\\ a_i\le \lfloor t\rfloor,\ \sum_i a_i \ge t}}
\bigcap_{i=0}^{N-1}\{X_i\ge a_i\}.
\]
Applying the union bound, the fact that 
$\{X_i\ge a_i\}=\{A_i\ge a_i\}$ and independence, we obtain
\[
	\mathbb{P}\!\left[\sum_{i=0}^{N-1} A_i \ge s\right]
\le
\sum_{\substack{a\in\mathbb{Z}_{\ge0}^N:\\ a_i\le \lfloor s-N\rfloor,\ \sum_i a_i \ge s-N}}
\prod_{i=0}^{N-1}\mathbb{P}[A_i\ge a_i].
\]
Bounding the sum by its supremum times the number of terms, and noting that the number of
$N$-tuples with $0\le a_i\le \lfloor s-N\rfloor$ is at most $(\lfloor s-N\rfloor+1)^N\le s^N$, 
for $s>N$, we get
\[
	\mathbb{P}\!\left[\sum_{i=0}^{N-1} A_i \ge s\right]
\le
s^{\,N}
\sup_{\substack{a\in\mathbb{Z}_{\ge0}^N:\\ \sum_i a_i \ge s-N}}
\prod_{i=0}^{N-1}\mathbb{P}[A_i\ge a_i].
\]
Finally, extend the supremum from integer $a_i$ to real $y_i\in[0,\infty)$ (which can only increase
the supremum), preserving the constraint $\sum_i y_i\ge s-N$. This yields the claimed bound.
\end{proof}

	\begin{proof}[Proof of Proposition \ref{prop:genupper}]

    We use the previous lemma to control the sum in \eqref{eq:naranja}. Setting $A_i = \tau_ie^{ - \alpha i \varepsilon }$ in Lemma \ref{lem:summa} and using this inequality in \eqref{eq:naranja}, we obtain
    \begin{align*}
    E(t,h) \leq e^{ -2h} + s^N e^h J_N,
    \end{align*}
where
\begin{align*}
	J_N = \sup \left\{  \prod_{i=0}^{N-1} \mathbb{P}\left[ \tau_i e^{ - \alpha i \varepsilon} \geq y_i \right] \,:\, (y_0,\ldots,y_{N-1}) \in [0,\infty)^N : \sum_{i=0}^{N-1} y_i \geq s-N-1 \right\}.
\end{align*}

Recall that $N = \lceil 2\log(h)/\alpha \varepsilon \rceil$ and $s = t e^{ - \alpha h}$. (We will ultimately take $s$ of the order $h^{1-\theta}$ times slowly varying terms.) Let $\delta > 0$. Provided $s\in\mathbb R$ is a number sufficiently large compared to $N$ so that $(N+1) \leq \delta s$ but $s$ is not too large compared to $h$ so that $s^N \leq e^{\delta h}$, then 
  \begin{align} \label{eq:juk1}
     E(t,h) \leq e^{ -2h} + e^{(1+\delta)h} J_N',
    \end{align}
where 
\begin{align*}
	J_N' = \sup \left\{  \prod_{i=0}^{N-1} \mathbb{P} \left[ \tau_i e^{ - \alpha i \varepsilon} \geq y_i \right] : (y_0,\ldots,y_{N-1}) \in [0,\infty)^N : \sum_{i=0}^{N-1} y_i \geq (1-\delta)s \right\}.
\end{align*}
Now note that by replacing $y_i $ with $\varepsilon y_i e^{ - \alpha i \varepsilon}$, and then extending the range of the supremum from $(y_0,\ldots,y_{N-1}) \in [0,\infty)^N$ to $(y_0,y_1,\ldots) \in [0,\infty)^{\mathbb{N}}$ (one can always set $y_i = 0$ for $i \geq N$), we have
\begin{align*}
J_N' \leq J( (1-\delta)s, \varepsilon),
\end{align*}
where $J(u,\varepsilon)$ is as in the statement of the proposition. Using the previous inequality in \eqref{eq:juk1} we obtain the result.
\end{proof}

\color{black}
\subsection{Proof of Proposition \ref{prop:infiniteupper}}

	In this section we work towards proving Proposition \ref{prop:infiniteupper}, 
	which is an upper bound for the expected number of large particles in 
	infinite-activity fragmentation processes.
    
	Fix $\varepsilon > 0$.
	We now use Theorem \ref{thm:levybound} to control the term $J(u,\varepsilon)$ occurring in 
	the setting of Proposition \ref{prop:genupper}. Here we have $J(u,\varepsilon) \leq e^{ - K(u,\varepsilon)}$, where
	\begin{align} \label{eq:Kdef}
		K(u,\varepsilon) := 
		\inf \left\{  \varepsilon 
		\sum_{ i \geq 0}  
   L\left(y_i\right)
        y_i^{\frac{1}{1-\theta}}  
		\,:\, (y_0,y_1,\ldots)\in[0,\infty)^{\mathbb{N}}, \:
		\varepsilon \sum_{ i \geq 0} y_i e^{ - \alpha i \varepsilon } 
		\geq u \right\},
	\end{align}
	and we recall from the statement of Theorem \ref{thm:levybound} that $L$ is 
	a slowly varying function at infinity. 
	Before giving our main result on the asymptotics of $K(u,\varepsilon)$,  we present the following 
	lemma which describes a minimisation problem that provides a proxy for the asymptotics 
	of $K(u,\varepsilon)$ after correct normalisation for the terms depending on $u$. 

	\begin{lemma} \label{lem:simple}
	Let $\alpha,\varepsilon > 0$ and $\theta \in (0,1)$. We have 
		\begin{align*}
			C(\theta,\alpha,\varepsilon)  :=& 
			\inf \left\{  \varepsilon \sum_{ i \geq 0}  z_i^{\frac{1}{1-\theta}}  
				: (z_0,z_1,\ldots) \in [0, +\infty)^\mathbb{N}, \:
			\varepsilon \sum_{ i \geq 0} z_i e^{ - \alpha i \varepsilon } 
			\geq 1 \right\}\\ 
			=&  \left( \frac{1-e^{ - \alpha \varepsilon/\theta} }{ \varepsilon} \right)^{\frac{\theta}{1-\theta}}.
		\end{align*}
		Moreover, the coordinates of the sequence 
		$(z_0,z_1,\ldots)\in[0,\infty)^{\mathbb{N}} $ attaining the minimum take the form
		\begin{align*}
			z_i = \frac{1-e^{- \alpha  \varepsilon/\theta}}{\varepsilon} 
			e^{ - \alpha i \varepsilon(1/\theta-1)}.
		\end{align*}
	\end{lemma}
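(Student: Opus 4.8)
The plan is to recognize this as an instance of Hölder's inequality with the conjugate exponents naturally associated with $\theta$. Write $p := \frac{1}{1-\theta}$ and $q := \frac{1}{\theta}$, so that $p,q>1$ and $\frac1p+\frac1q=1$, and abbreviate $a_i := e^{-\alpha i \varepsilon}$. Since $\alpha,\varepsilon>0$ we have a convergent geometric series $A := \sum_{i\geq 0} a_i^{q} = \sum_{i\geq 0} e^{-\alpha i \varepsilon/\theta} = (1-e^{-\alpha\varepsilon/\theta})^{-1}$, which will be the one summation that actually enters the computation.

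First I would establish the lower bound $\varepsilon\sum_{i\geq 0} z_i^{1/(1-\theta)} \geq C(\theta,\alpha,\varepsilon)$ for every feasible sequence $(z_i)_{i \geq 0}$. If $\sum_{i\geq 0} z_i^{p} = +\infty$ there is nothing to prove, so assume it is finite; then Hölder's inequality applied to the pairing of $(z_i)$ and $(a_i)$ gives $\sum_{i\geq 0} z_i a_i \leq \big(\sum_{i\geq 0} z_i^{p}\big)^{1/p} A^{1/q}$. Feasibility means $\sum_{i\geq 0} z_i a_i \geq 1/\varepsilon$, so rearranging yields $\sum_{i\geq 0} z_i^{p} \geq \varepsilon^{-p} A^{-p/q}$. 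Using $p/q = \frac{\theta}{1-\theta}$ and $1-p = -\frac{\theta}{1-\theta}$, multiplying through by $\varepsilon$ and substituting $A = (1-e^{-\alpha\varepsilon/\theta})^{-1}$ turns this into $\varepsilon\sum_{i\geq 0} z_i^{p} \geq \big(\frac{1-e^{-\alpha\varepsilon/\theta}}{\varepsilon}\big)^{\theta/(1-\theta)} =: C(\theta,\alpha,\varepsilon)$, the asserted constant.

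It then remains to verify that the explicit sequence $z_i = \frac{1-e^{-\alpha\varepsilon/\theta}}{\varepsilon}\,e^{-\alpha i \varepsilon(1/\theta-1)}$ is feasible and attains this value; this is two further geometric-series computations, using $(1/\theta-1)+1 = 1/\theta$ for the constraint and $(1/\theta-1)p = 1/\theta$ for the objective, which give $\varepsilon\sum_{i\geq 0} z_i a_i = 1$ and $\varepsilon\sum_{i\geq 0} z_i^{p} = C(\theta,\alpha,\varepsilon)$. Hence the infimum is attained and equals $C(\theta,\alpha,\varepsilon)$. For the ``moreover'' part one observes that equality in the Hölder step forces $z_i^{p}$ to be proportional to $a_i^{q}$, i.e. $z_i \propto e^{-\alpha i \varepsilon(1/\theta-1)}$, and that any minimizer must make the (linear) constraint tight — otherwise one could scale $(z_i)$ down and strictly decrease the objective — which pins down the proportionality constant to the stated value (this is also exactly what the Lagrange-multiplier stationarity condition $p z_i^{p-1} = \mu a_i$ produces).

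I do not anticipate any serious obstacle: the content is entirely the correct identification of the exponents and the summation of two geometric series. The only point requiring a little care is the bookkeeping $p=\frac{1}{1-\theta}$, $q=\frac{1}{\theta}$, $p/q=\frac{\theta}{1-\theta}$, together with the observation that dismissing the divergent case up front means no compactness or existence argument in the infinite-dimensional feasible set is needed — the explicit minimizer supplies attainment directly.
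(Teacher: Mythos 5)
Your proof is correct, and it takes a genuinely different route from the paper. The paper disposes of this lemma by a one-line Lagrange-multiplier computation: differentiating $\sum_i z_i^{1/(1-\theta)} - \mu \sum_i z_i e^{-\alpha i \varepsilon}$ in each coordinate gives the stationarity condition $\tfrac{1}{1-\theta} z_i^{\theta/(1-\theta)} = \mu e^{-\alpha i \varepsilon}$, hence $z_i \propto e^{-\alpha i \varepsilon(1/\theta-1)}$, and the tight constraint fixes the constant; the value of the infimum is then read off by substitution. Your Hölder argument with $p=\tfrac{1}{1-\theta}$, $q=\tfrac{1}{\theta}$ reaches the same minimizer but buys something the paper's sketch leaves implicit: it proves the lower bound $\varepsilon\sum_i z_i^{1/(1-\theta)} \geq \bigl(\tfrac{1-e^{-\alpha\varepsilon/\theta}}{\varepsilon}\bigr)^{\theta/(1-\theta)}$ for \emph{every} feasible sequence, so no existence or regularity argument for a minimizer in the infinite-dimensional feasible set is needed, and the equality case of Hölder identifies the minimizer cleanly. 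Your exponent bookkeeping ($p/q=\tfrac{\theta}{1-\theta}$, $1-p=-\tfrac{\theta}{1-\theta}$, $(1/\theta-1)\cdot\tfrac{1}{1-\theta}=1/\theta$) checks out, as do the two geometric-series evaluations. One small remark: the displayed constant in the lemma as printed contains a stray index $i$ in the exponent ($1-e^{-\alpha i \varepsilon/\theta}$); you correctly worked with $1-e^{-\alpha\varepsilon/\theta}$, which is what the paper itself uses in all subsequent applications of $C(\theta,\alpha,\varepsilon)$.
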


	\begin{proof}
		This follows from a simple Lagrange multipliers calculation, 
		solving 
		\begin{equation*}
			0 = \frac{\partial}{ \partial z_i} 
			\left(\sum_{ i \geq 0}  z_i^{\frac{1}{1-\theta}}  
			- \mu \sum_{i \geq 0} z_i e^{ - \alpha i \varepsilon } \right)
		\end{equation*}
		for a given parameter $\mu$, and using the fact
		that the minimiser $(z_0,z_1,\ldots)$ satisfies 
		$\varepsilon \sum_{ i \geq 0} z_i e^{ - \alpha i \varepsilon } \geq 1$.
	\end{proof}

	In order to relate the asymptotics of $K(u,\varepsilon)$ to the variational 
	problem in Lemma \ref{lem:simple}, we will require the following technical lemma, whose proof we postpone to the appendix.

	\begin{lemma} \label{lem:tech}
		Let $\alpha,\varepsilon > 0$ and $\theta \in (0,1)$. Let $f_n:[0,\infty) \to [0,\infty)$ be a sequence of functions 
		such that there exist a constant $C>0$ so that for each $n\in\mathbb N$, 
		$f_n$ is bounded on $[0,1]$, and 
		$f_n(x) \geq C x^{ - \delta_n}$ for $x \geq 1$, for some 
		sequence $\delta_n \to 0$. 
		Suppose further that $f_n(x) \to 1$, uniformly for $x$ in 
		compact subsets $[a,A] \subseteq (0,\infty)$, with $a,A\in(0,\infty)$. Then
		\begin{align*}
			\liminf_{n \to \infty} 
			\inf_{ \varepsilon \sum_{i=0}^{\infty} z_ie^{ - \alpha i \varepsilon} \geq 1  } 
			\varepsilon  \sum_{i=0}^{\infty} z_i^{\frac{1}{1-\theta}}f_n(z_i) 
			\geq C(\theta,\alpha,\varepsilon),
		\end{align*}
		where $C(\theta,\alpha,\varepsilon)$ is given in Lemma \ref{lem:simple}.
	\end{lemma}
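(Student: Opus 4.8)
The plan is to reduce the perturbed variational problem to the unperturbed one of Lemma~\ref{lem:simple} by confining a near-minimiser to a fixed compact range of values on which the convergence $f_n\to 1$ is uniform, and treating the ``tails'' (coordinates that are very small or very large) separately. Throughout, fix $p:=\tfrac1{1-\theta}>1$, and note that all summands $z_i^{p}f_n(z_i)$ are nonnegative, so we are always free to discard indices.

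First I would record two structural facts. \emph{(a) Small coordinates cannot carry the constraint.} For any feasible $(z_i)$, $\varepsilon\sum_{i:\,z_i<a}z_i e^{-\alpha i\varepsilon}\le \varepsilon a\sum_{i\ge 0}e^{-\alpha i\varepsilon}=\varepsilon a/(1-e^{-\alpha\varepsilon})$, so given $\eta>0$ one may fix $a=a(\eta)\in(0,1)$ small enough that the indices with $z_i<a$ contribute less than $\eta$ to the constraint, for every feasible sequence. \emph{(b) Large coordinates are prohibitively expensive.} If $n$ is large enough that $\delta_n\le \tfrac{\theta}{2(1-\theta)}$, then for $z_i>A\ge 1$ the hypothesis $f_n(x)\ge Cx^{-\delta_n}$ and $z_i\ge 1$ give $z_i^{p}f_n(z_i)\ge Cz_i^{p-\delta_n}= Cz_i\,z_i^{p-1-\delta_n}\ge CA^{\theta/(2(1-\theta))}z_i\ge CA^{\theta/(2(1-\theta))}z_i e^{-\alpha i\varepsilon}$, using $p-1-\delta_n\ge\tfrac{\theta}{2(1-\theta)}>0$. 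Summing, the cost contributed by indices with $z_i>A$ is at least $CA^{\theta/(2(1-\theta))}$ times the amount those indices contribute to the constraint; this is where $\delta_n\to 0$ is essential.

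Now fix $\eta\in(0,1/2)$, take $a=a(\eta)$ as in (a), and choose $A=A(\eta)\ge 1$ so large that $CA^{\theta/(2(1-\theta))}\eta\ge C(\theta,\alpha,\varepsilon)$. For $n$ large enough that $\delta_n\le\tfrac{\theta}{2(1-\theta)}$, set $\omega_n:=\sup_{x\in[a,A]}|f_n(x)-1|$, which tends to $0$ by local uniform convergence since $[a,A]\subset(0,\infty)$ is compact. Let $(z_i)$ be feasible and put $S:=\varepsilon\sum_{i:\,z_i>A}z_i e^{-\alpha i\varepsilon}$. If $S\ge\eta$, fact (b) gives $\varepsilon\sum_i z_i^{p}f_n(z_i)\ge CA^{\theta/(2(1-\theta))}S\ge C(\theta,\alpha,\varepsilon)$. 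If $S<\eta$, then by (a) the index set $G:=\{i:z_i\in[a,A]\}$ carries constraint mass $\varepsilon\sum_{i\in G}z_i e^{-\alpha i\varepsilon}\ge 1-2\eta$; the sequence $(z_i\mathbf 1_{i\in G})_i$ is therefore feasible for the constraint ``$\ge 1-2\eta$'', so discarding the other indices, using $f_n\ge 1-\omega_n$ on $[a,A]$, and then applying Lemma~\ref{lem:simple} together with the scaling $z\mapsto(1-2\eta)z$,
\[
\varepsilon\sum_{i}z_i^{p}f_n(z_i)\ \ge\ (1-\omega_n)\,\varepsilon\!\!\sum_{i\in G}\!\! z_i^{p}\ \ge\ (1-\omega_n)(1-2\eta)^{p}\,C(\theta,\alpha,\varepsilon).
\]
Taking the infimum over feasible $(z_i)$ and then $\liminf_{n\to\infty}$ with $\eta$ (hence $a,A$) fixed yields $\liminf_{n}(\cdots)\ge(1-2\eta)^{p}\,C(\theta,\alpha,\varepsilon)$, and letting $\eta\downarrow 0$ completes the proof. (The hypothesis that $f_n$ is bounded on $[0,1]$ is only needed to guarantee the discarded terms are finite; it plays no role in the lower bound itself.)

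The only delicate point is fact (b): in the large-$z_i$ regime $f_n$ is controlled solely by the lower bound $Cx^{-\delta_n}$, whose exponent degrades with $n$, so one must verify that the superlinear growth of $x\mapsto x^{p}$ beats the $x^{-\delta_n}$ loss uniformly once $\delta_n$ falls below the explicit threshold $\tfrac{\theta}{2(1-\theta)}$ — this is precisely what converts ``large coordinates'' into a prohibitively costly option and lets the analysis localise to the compact interval $[a,A]$ on which $f_n\to 1$ uniformly. The small-$z_i$ regime, by contrast, is soft: a geometric-series bound caps how much of the constraint such coordinates can ever supply.
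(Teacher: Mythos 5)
Your proof is correct and follows essentially the same strategy as the paper: split coordinates into small, intermediate $[a,A]$, and large ranges, use $\delta_n\to 0$ to rule out the large ones, use uniform convergence of $f_n$ on the compact set $[a,A]$, and rescale to invoke Lemma \ref{lem:simple}. The only (cosmetic) difference is in the large-coordinate step, where you compare the cost of coordinates exceeding $A$ with their contribution $S$ to the constraint and run a dichotomy in $S$, whereas the paper simply notes that a single coordinate above $A$ already forces the objective above the target value $C(\theta,\alpha,\varepsilon)$.
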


	We now use the previous lemma to describe the asymptotics of $K(u,\varepsilon)$.

	\begin{lemma} \label{lem:Klower}
		For all $\delta > 0$, there exists $u_\delta >0$
		such that whenever $u \geq u_\delta$, we have 
		\begin{align*}
			K(u,\varepsilon) 
			\geq (1-\delta) L(u) u^{\frac{1}{1-\theta}} C(\theta,\alpha,\varepsilon),
		\end{align*}
        where $C(\theta,\alpha,\varepsilon)$ is given in Lemma \ref{lem:simple}.
	\end{lemma}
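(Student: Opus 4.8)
The plan is to reduce the variational problem defining $K(u,\varepsilon)$ in \eqref{eq:Kdef} to the normalised problem solved in Lemma~\ref{lem:simple} by the homogeneity substitution $y_i = u z_i$, treating the ratio $L(uz_i)/L(u)$ as the perturbation function $f_n$ to which the preceding lemma applies. Concretely, given any sequence $(y_0,y_1,\ldots)$ feasible for \eqref{eq:Kdef}, i.e.\ with $\varepsilon\sum_{i\ge0} y_i e^{-\alpha i\varepsilon}\ge u$, the rescaled sequence $z_i := y_i/u$ satisfies $\varepsilon\sum_{i\ge0} z_i e^{-\alpha i\varepsilon}\ge 1$, and conversely; since $L(y_i)y_i^{1/(1-\theta)} = L(uz_i)(uz_i)^{1/(1-\theta)}$, taking infima over feasible sequences yields the exact identity
\[
\frac{K(u,\varepsilon)}{L(u)\,u^{\frac{1}{1-\theta}}}
= \inf\left\{\varepsilon\sum_{i\ge 0} f_u(z_i)\,z_i^{\frac{1}{1-\theta}} \;:\; (z_0,z_1,\ldots)\in[0,\infty)^{\mathbb N},\ \varepsilon\sum_{i\ge 0} z_i e^{-\alpha i\varepsilon}\ge 1\right\}, \qquad f_u(x):=\frac{L(ux)}{L(u)}.
\]
Thus it suffices to prove $\liminf_{u\to\infty}$ of the right-hand side is at least $C(\theta,\alpha,\varepsilon)$, since the stated bound then follows on rearranging and choosing $u_\delta$ large.

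Next I would pass to sequences so as to invoke the preceding (unlabelled) lemma directly: fix any $u_n\to\infty$ along which the liminf above is attained and set $f_n := f_{u_n}$. It then remains to verify that $(f_n)_{n\ge1}$ meets the three hypotheses of that lemma. Uniform convergence $f_n(x)\to 1$ on compact subsets $[a,A]$ of $(0,\infty)$ is precisely the uniform convergence theorem for slowly varying functions, \cite[Theorem 1.2.1]{bingham1989regular}. The lower bound $f_n(x)\ge \tfrac12 x^{-\delta_n}$ for $x\ge 1$ with $\delta_n\downarrow 0$ comes from Potter's bounds \cite[Theorem 1.5.6]{bingham1989regular}: for each $\eta>0$ there is $X_\eta$ with $L(vx)/L(v)\ge \tfrac12 x^{-\eta}$ whenever $v\ge X_\eta$ and $vx\ge X_\eta$ (the latter being automatic for $x\ge1$), and a diagonal choice $\eta = \eta_n\downarrow 0$ with $u_n\ge X_{\eta_n}$ (possible since $u_n\to\infty$ and the $X_{\eta_k}$ may be taken increasing) furnishes the sequence $\delta_n$. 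Boundedness of each $f_n$ on $[0,1]$ follows from local boundedness of $L$ on $(0,\infty)$ together with Potter's upper bound near the origin, using the explicit form of $L$ coming from \eqref{eq:1:crumbling}. Granting these checks, the preceding lemma gives $\liminf_{n\to\infty}\big(\text{right-hand side above evaluated at }u_n\big)\ge C(\theta,\alpha,\varepsilon)$, which is the desired conclusion.

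The only genuinely delicate part is the second step's bookkeeping: extracting a single constant and a null sequence $\delta_n$ from the $\eta$-indexed family of Potter thresholds, and confirming the $[0,1]$-boundedness hypothesis for the particular slowly varying $L$ at hand; the first step is a formal rescaling and the final assembly is an immediate appeal to the preceding lemma, so these present no difficulty.
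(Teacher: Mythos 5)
Your proposal is correct and follows essentially the same route as the paper: rescale $y_i = uz_i$ to factor out $L(u)u^{\frac{1}{1-\theta}}$, identify the perturbation $f_u(z) = L(uz)/L(u)$, and verify the hypotheses of the preceding technical lemma via the uniform convergence theorem and Potter bounds. Your extra bookkeeping (passing to a sequence attaining the liminf and the diagonal choice of $\delta_n$) only makes explicit what the paper leaves implicit.
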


	\begin{proof}
		Note that $K(u,\varepsilon)$ may be written
		\begin{align*}
			K(u,\varepsilon) = L(u)u^{\frac{1}{1-\theta}} 
			\inf \left\{  \varepsilon \sum_{ i \geq 0}  f_u(z_i) z_i^{\frac{1}{1-\theta}}  
			\,:\, (z_0,z_1,\ldots)\in[0,\infty)^{\mathbb{N}}, \: 
			\varepsilon \sum_{ i \geq 0}z_i e^{ - \alpha i \varepsilon } \geq 1 \right\},
		\end{align*}
		where
		\begin{align*}
			f_u(z) = L(uz)/L(u).
		\end{align*}
		Since $L$ is a slowly varying function, 
		$f_u$ converges pointwise to $1$. 
		Moreover, according to the uniform convergence theorem 
		(\cite[Section 1.2]{bingham1989regular}), this convergence is 
		uniform on compact subsets of $(0,\infty)$. It is also possible to show 
		using Karamata's representation theorem \cite[Section 1.3]{bingham1989regular} 
		that $f_u(z) \geq C z^{ o_u(1)}$ for $z \geq 1$. 
		Proposition \ref{prop:infiniteupper} now follows 
		from applying the previous lemma to $(f_u)$. 
	\end{proof}

	\begin{proof}[Proof of Proposition \ref{prop:infiniteupper}]
		Let $\delta > 0$. 
		By Proposition \ref{prop:genupper}, for all sufficiently large 
		$s = te^{-\alpha h}$ and $h$ we have 
		\begin{align*}
			E(t,h) \leq e^{-2h} + e^{(1+\delta)h} J((1-\delta)s , \varepsilon) .
		\end{align*}
		Now as noted above, by Theorem \ref{thm:levybound} we 
		have $J(u,\varepsilon) \leq e^{ - K(u,\varepsilon)}$, where 
		$K(u,\varepsilon)$ is as in \eqref{eq:Kdef}. 
		Now by Lemma \ref{lem:Klower} for all $s$ sufficiently large, we have
		\begin{align*}
			E(t,h) \leq 
			e^{-2h} + \exp \left\{ (1 + \delta)h - (1-\delta)^{1+\frac{1}{1-\theta}} 
			L((1-\delta)s) s^{\frac{1}{1-\theta}} C(\theta,\alpha,\varepsilon) \right\}. 
		\end{align*}
		Note that 
		\begin{align*}
			C(\theta,\alpha,\varepsilon) = 
			\left( \frac{1-e^{ - \alpha  \varepsilon/\theta} }{ \varepsilon} \right)^{\frac{\theta}{1-\theta}} \to (\alpha/\theta)^{\theta/(1-\theta)}, \qquad \mbox{ as } \varepsilon\rightarrow 0,
		\end{align*}
		in an increasing way. Thus, we may choose $\varepsilon>0$ sufficiently 
		small so that $C(\alpha,\theta,\varepsilon) \geq (1-\delta)(\alpha/\theta)^{\theta/(1-\theta)}$. Then
		\begin{align*}
			E(t,h) \leq e^{-2h} + \exp \left\{ (1 + \delta)h - (1-\delta)^{2+\frac{1}{1-\theta}}L((1-\delta)s)
            s^{\frac{1}{1-\theta}}( \alpha/\theta)^{\theta/(1-\theta)} \right\}. 
		\end{align*}
		After a brief calculation using Lemma \ref{lem:Gcheck}, one may check that after 
		choosing $c = c_{\alpha,\theta}$ that does not depend on $\delta$, sufficiently large,  we have 
		$E((1+c\delta)F_\theta(h),h) \leq e^{ - 2 h}+e^{ - \delta h}$. 
		Replacing $c \delta$ with $\delta$, we obtain Proposition \ref{prop:infiniteupper} as written.

\end{proof}

\section{Lower bounds for existence of large particles} \label{sec:lower}

	In this section, it will be useful to introduce the following notation. Let
	\begin{align*}
		I(t,h) := \left\{ \text{At time $t$ there are no fragments of size greater than or equal to $e^{-h}$} \right\}
	\end{align*}
    	be the event that every fragment at time $t$ has size $< e^{-h}$. 
    	During the course of this section, 
	we prove the following result that complements Proposition~\ref{prop:infiniteupper}.


	\begin{proposition} \label{prop:infinitelower}
		Let $ F_\theta(h)$ defined as in \eqref{eq:Fdef} with $G(h)$ as in \eqref{eq:Gdef3}. 
        	For every $\delta > 0$ there exists a random $h_1(\delta) > 0$ 
		such that for all $h \geq h_1(\delta)$, $I((1-\delta)F_\theta(h),h)$ does not occur. 
		In other words, for all sufficiently large $h$ greater than some random $h$, 
		there exists a particle of size $\geq e^{-h}$ at time $t = (1-\delta)F_\theta(h)$.
	\end{proposition}

	In the proof of Proposition \ref{prop:infinitelower}, we will use the tools developed in Section \ref{sec:CMJ}.
	Recall that in the context of Section \ref{sec:independent} that 
	we say that a collection of fragments existing at various 
	points of time is an antichain if no fragment is ancestor 
	to any other fragment in the collection. We prove in that 
	Section \ref{sec:independent} that when $h$ is large, one can choose a 
	large antichain of fragments of sizes between $e^{ - h}$ and 
	$e^{ -(h-\varepsilon)}$. The future behaviour of distinct fragments in an antichain is independent.

\subsection{Lower bounds for probabilities of paths for L\'evy processes}

	The main idea in the proof of Proposition~\ref{prop:infinitelower}
	is that we construct an event under which the time-changed 
	L\'evy process $(\xi_{\rho(t)})_{t \geq 0}$ takes a path which leads to a large particle at time $t$. 

	For $N \in \mathbb{N}$ let $t_0,\ldots,t_{N-1} > 0$ be a collection of times. 
	Define $q_0 := 0$ and $q_j := \sum_{i=1}^j t_{N-j}$, so that $q_j - q_{j-1} = t_{N-j}$ 
	for $1 \leq j \leq N$. We begin by observing that since $(\xi_t)_{t \geq 0}$ is a 
	Markov process with stationary increments, for any starting position $-x$ with $x \in [0,\varepsilon]$ we have
	\begin{align} \label{eq:Markov}
		\mathbb{P}_{-x} \left[ \xi_{q_j } \in [(j-1)\varepsilon, j \varepsilon ] ~ 
		\text{for all $j = 1,\ldots,N$} \right] \geq \prod_{ i = 0}^{N-1} f_\varepsilon(t_i),
	\end{align}
	where
	\begin{align*}
		f_\varepsilon(t) := 
		\inf_{ u \in [0,1] } \mathbb{P}_{-u\varepsilon} [ \xi_t \in [0,\varepsilon] ] = 
		\inf_{u \in [0,1]} \left( \mathbb{P}_0[ \xi_t \leq (1+u)\varepsilon ] - 
		\mathbb{P}_0 [ \xi_t \leq u\varepsilon] \right).
	\end{align*}

	Our next result provides a lower bound for $f_\varepsilon(t)$ using Theorem \ref{thm:levybound}.

	\begin{lemma} \label{lem:pushlower}
		For every $\varepsilon, \delta \in (0,1/2)$ there exists a 
		$t_{\delta,\varepsilon} >0 $ such that whenever $t \geq t_{\delta,\varepsilon}$ we have 
		\begin{align*}
			f_\varepsilon(t) \geq 
			\exp \left( - (1 + \delta) L(t) t^{\frac{1}{1-\theta}} \varepsilon^{-\theta/(1-\theta)} \right),
		\end{align*}
        where $L(t)$ and $\theta \in [0,1)$ are as in the statement of Theorem \ref{thm:levybound}.
	\end{lemma}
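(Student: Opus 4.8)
The plan is as follows. By spatial homogeneity of the L\'evy process, $f_\varepsilon(t) = \inf_{u \in [0,1]}\mathbb{P}_0\big(\xi_t \in [u\varepsilon, (1+u)\varepsilon]\big)$, so it suffices to bound $\mathbb{P}_0(\xi_t \in [u\varepsilon,(1+u)\varepsilon])$ from below, uniformly in $u \in [0,1]$. Writing this probability as $\mathbb{P}_0(\xi_t \le (1+u)\varepsilon) - \mathbb{P}_0(\xi_t \le u\varepsilon)$ exposes the difficulty: when $u$ is close to $1$ the subtracted term is of the same rough order as the first one, so one cannot simply discard it. The resolution is that, by Theorem \ref{thm:levybound}, $-\log \mathbb{P}_0(\xi_t \le w) = (1+o(1)) L(t) t^{1/(1-\theta)} w^{-\theta/(1-\theta)}$ as $t \to \infty$, uniformly for $w$ in a compact subinterval of $(0,\infty)$; consequently $-\log\mathbb{P}_0(\xi_t \le w)$ is larger at a smaller level $w$ than at a larger one by a margin that \emph{diverges} as $t \to \infty$.

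Concretely, for $u \in [0,1]$ set $a_u := \max(u\varepsilon, \varepsilon/2)$ and $b_u := a_u + \varepsilon/2$, so that $a_u \in [\varepsilon/2, \varepsilon]$, $b_u \in [\varepsilon, 3\varepsilon/2]$, $b_u/a_u \ge 3/2$, and one checks directly that $[a_u, b_u] \subseteq [u\varepsilon, (1+u)\varepsilon]$ for all $u \in [0,1]$. Hence
\[
\mathbb{P}_0\big(\xi_t \in [u\varepsilon,(1+u)\varepsilon]\big) \ \ge\ \mathbb{P}_0(\xi_t \le b_u) - \mathbb{P}_0(\xi_t \le a_u) \ \ge\ e^{-F(b_u,t) - cF(b_u,t)^{1/3}} - e^{-F(a_u,t)},
\]
using the two-sided bound of Theorem \ref{thm:levybound} (lower bound on the $b_u$-term, upper bound on the $a_u$-term). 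By uniform slow variation on the window $[\varepsilon/2, 3\varepsilon/2]$ we have $F(w,t) = (1+o(1)) L(t) t^{1/(1-\theta)} w^{-\theta/(1-\theta)}$ uniformly for $w$ there, and since $(a_u)^{-\theta/(1-\theta)} - (b_u)^{-\theta/(1-\theta)}$ is bounded below by a positive constant while $L(t) t^{1/(1-\theta)} \to \infty$, the gap $F(a_u,t) - F(b_u,t) \to \infty$ uniformly in $u$; in particular it eventually exceeds $cF(b_u,t)^{1/3} + \log 2$. Therefore, for $t$ larger than a threshold depending only on $\delta$ and $\varepsilon$,
\[
\mathbb{P}_0\big(\xi_t \in [u\varepsilon,(1+u)\varepsilon]\big) \ \ge\ \tfrac12\, e^{-F(b_u,t) - cF(b_u,t)^{1/3}} \qquad \text{for every } u \in [0,1].
\]

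To conclude, note that $b_u \ge \varepsilon$ gives $F(b_u,t) \le (1+o(1)) L(t) t^{1/(1-\theta)} \varepsilon^{-\theta/(1-\theta)}$, while $cF(b_u,t)^{1/3} + \log 2$ is of strictly smaller order; so for all $t$ beyond some $t_{\delta,\varepsilon}$ the last display is at least $\exp\big(-(1+\delta) L(t) t^{1/(1-\theta)} \varepsilon^{-\theta/(1-\theta)}\big)$, uniformly in $u$, and taking the infimum over $u \in [0,1]$ yields the lemma. The step I expect to be the main obstacle is establishing the required bounds \emph{uniformly in} $u \in [0,1]$: this forces one to exploit the divergence of the gap $F(a_u,t) - F(b_u,t)$ rather than a mere eventual inequality, and to use the uniform (not just pointwise) convergence theorem for slowly varying functions on the compact interval $[\varepsilon/2, 3\varepsilon/2]$; the choice of the sub-window $[a_u,b_u]$ is designed precisely so that $a_u,b_u$ stay in this fixed compact set while $[a_u,b_u]$ still fits inside $[u\varepsilon,(1+u)\varepsilon]$ for every $u$.
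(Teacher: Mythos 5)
Your proof is correct, and its core is the same as the paper's: write $f_\varepsilon(t)$ as an infimum over $u\in[0,1]$ of a difference of distribution functions, apply the lower bound of Theorem \ref{thm:levybound} to the larger level and the upper bound to the smaller level, and win because the gap between the two exponents diverges like $L(t)t^{\frac{1}{1-\theta}}$ while the correction term is only of order $F^{1/3}$. Where you genuinely diverge from the paper is in how the uniformity in $u$ is handled. The paper keeps the full window $[u\varepsilon,(1+u)\varepsilon]$ and must control $B_t(u)=A_t(u)-(1+\delta)A_t(1+u)$ over all of $u\in[0,1]$, where $A_t(p)=F(p\varepsilon,t)$; since the level $u\varepsilon$ can be arbitrarily small, the argument of the slowly varying factor $L(t/u\varepsilon)$ leaves every compact set, and the paper has to invoke Karamata's representation (a Potter-type bound $L(st)/L(t)\geq s^{-\kappa}$ for $s \geq 1/\varepsilon$), together with a careful choice of $\kappa$ and of $\varepsilon^\kappa\geq 1-\delta$, to keep $\inf_u B_t(u)$ bounded below by a positive multiple of $L(t)t^{\frac{1}{1-\theta}}$. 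Your sub-window $[a_u,b_u]\subseteq[u\varepsilon,(1+u)\varepsilon]$ with $a_u=\max(u\varepsilon,\varepsilon/2)$ pins both levels inside the fixed compact set $[\varepsilon/2,3\varepsilon/2]$ while keeping the ratio $b_u/a_u\geq 3/2$ bounded away from $1$, so the uniform convergence theorem for slowly varying functions alone gives the uniform divergence of $F(a_u,t)-F(b_u,t)$; no Potter bounds are needed, and there is no delicate competition between the two exponents when $u$ is near $1$. Discarding part of the target interval costs nothing because only a lower bound with $(1+\delta)$ slack is required, and your final step $F(b_u,t)\leq(1+o(1))L(t)t^{\frac{1}{1-\theta}}\varepsilon^{-\theta/(1-\theta)}$ (using $b_u\geq\varepsilon$) recovers exactly the constant in the statement. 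In short: same skeleton, but your compactification trick is a cleaner route through the only technically delicate point of the paper's proof.
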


	\begin{proof}
We would like to apply the lower bound in Theorem \ref{thm:levybound}, which states that for any $\delta>0$ there exists $x_0 > 0$, a slowly varying $\hat{L}:(0,\infty) \to (0,\infty)$, and  a function $\Delta \colon [0,+\infty) \to [0,+\infty)$ with $\Delta(x) \to 0$ as $x \to 0$, such that 
\begin{equation} \label{eq:repeat}
	\mathbb{P}\left[ xt \leq \xi_t \leq xt +\sqrt{t x^{ \frac{2-\theta}{1-\theta} - \delta}} \right] 
	\geq \exp \left\{ -  t x^{ - \frac{\theta}{1-\theta}} L(1/x) (1 + \Delta(x)) \right\},
\end{equation}
whenever $t \geq \hat{L}(1/x)$ and $0 < x \leq x_0$.

Choose $\rho = \rho_\eta > 0$ sufficiently small so that $(1 - \rho)^{ - \frac{\theta}{1-\theta}} \leq 1 + \eta$. 
Consider using Theorem \ref{thm:levybound} with $\delta=1/2$ and $x = x_{t,\rho,u} = \varepsilon(1-\rho+u)/t$ in \eqref{eq:repeat}. Note that
\begin{align*}
tx^{\frac{2-\theta}{1-\theta}-\delta} = \varepsilon^{\frac{2-\theta}{1-\theta} - \delta} t^{ 1 + \delta - \frac{2-\theta}{1-\theta}}(1-\rho+ u )^{\frac{2-\theta}{1-\theta} - \delta} \leq C^2 t^{-1/2},
\end{align*}
where $C^2 = 2^{\frac{2-\theta}{1-\theta} - \delta} $ does not depend on $u$. With this choice of $x$, now note that provided $t \geq t_{\rho,\varepsilon}$ is sufficiently large so that $Ct^{-1/4} \leq \rho \varepsilon$ we have the inclusion of intervals
\begin{align*}
\left[  xt , xt +\sqrt{t x^{ \frac{2-\theta}{1-\theta} - \delta}} \right] \subseteq \left[ \varepsilon(1-\rho+u) ,\varepsilon(1-\rho+u)  + C t^{-1/4} \right] \subseteq [ u\varepsilon, (1+u)\varepsilon].
\end{align*}
In particular, by \eqref{eq:repeat} it follows that for all sufficiently large $t \geq t_{\eta, \varepsilon}$, for any $u \in [0,1]$ we have 
\begin{align}  \label{eq:repeat2}
\mathbb{P}\left[ \xi_t \in [u\varepsilon,(1+u)\varepsilon] \right] \geq  \exp \left\{ - (1-\rho+u)^{- \frac{\theta}{1-\theta}} t^{\frac{1}{1-\theta}} \varepsilon^{ - \frac{\theta}{1-\theta}}  L ( 1/x_{t,\rho,u}) \left(1 + \Delta ( x_{t,\rho,u} \right)   \right\}.
\end{align}

We can now control all of the terms on the right-hand side of \eqref{eq:repeat2}. Let $\eta > 0$ be arbitrary. First of all, it is a basic property of slowly varying functions that for all fixed $c, \delta > 0$, there exists $t_{c,\delta}$ such that whenever $t \geq t_{c,\delta}$ guarantees that $L(ct) \leq (1+\delta) L(t)$. In particular 
\begin{align} \label{eq:y1}
L(1/x_{t,\rho,u}) \leq (1+\eta) L(t) \qquad \text{for all $t \geq t_\varepsilon$},
\end{align}
where $t_\varepsilon$ does not depend on $u$ or $\rho$. 

Second, by the fact that $\Delta(x)$ converges to zero as $x \downarrow 0$, we have
\begin{align} \label{eq:y2}
1+\Delta(x_{t,\rho}) \leq 1 + \eta \qquad \text{for all $t \geq t_\varepsilon'$}.
\end{align}
Finally, by the definition of $\rho = \rho_\eta > 0$, we have
\begin{align} \label{eq:y3}
(1- \rho + u)^{ - \frac{\theta}{1-\theta}} \leq (1- \rho)^{ - \frac{\theta}{1-\theta}}  \leq 1 + \eta.
\end{align} 

Using \eqref{eq:y1}, \eqref{eq:y2} and \eqref{eq:y3} in \eqref{eq:repeat2} we see that for all $t \geq t_{\eta,\varepsilon}$ we have 
\begin{align}  \label{eq:repeat3}
\mathbb{P}\left[ \xi_t \in [u\varepsilon,(1+u)\varepsilon] \right] \geq  \exp \left\{ - (1+\eta)^3 L(t)t^{ \frac{1}{1-\theta}} \varepsilon^{ - \frac{\theta}{1-\theta}}  \right\}.
\end{align}
Now given any $\delta > 0$, we may choose $\eta > 0$ sufficiently small so that $(1+\eta)^3 \leq 1 + \delta$, and obtain the result for all $t \geq t_{\delta,\varepsilon}$. 

	\end{proof} 

	With $t_{\delta,\varepsilon}$ as in the statement of Lemma \ref{lem:pushlower}, 
	let $t_0,\ldots,t_{N-1} \geq t_{\delta,\varepsilon}/\varepsilon$, and define
	\begin{align} \label{eq:Tdef}
		T(t_0,\ldots,t_{N-1}) 
		:= e^{ \alpha ( N - 2 )\varepsilon } \varepsilon 
		\sum_{ j = 0 }^{N-1} t_j e^{ - \alpha \varepsilon j } .
	\end{align}

	\begin{lemma} \label{lem:carefulbound}
		Under $\mathbb{P}_x$, let $(\xi_t)_{t \geq 0}$ be the L\'evy process 
		of Section \ref{sec:one}, and let $\rho(t)$ be the time change 
		defined by $t = \int_0^{\rho(t)} e^{\alpha \xi_s} \mathrm{d}s$. 
		Let $\delta,\varepsilon> 0$, $N\in \mathbb{N}$ be given. 
		Suppose $t_0,\ldots,t_{N-1} \geq t_{\delta,\varepsilon}/\varepsilon$.
		Then, with $T = T(t_0,\ldots,t_{N-1})$ as in \eqref{eq:Tdef}, we have
		\begin{align*}
			\mathbb{P} \left[ \xi_{\rho(T)} \leq N \varepsilon \right]
			\geq \exp \left\{ - (1+\delta) \varepsilon 
			\sum_{ j = 0}^{N-1}L( t_i) t_i^{ \frac{1}{1-\theta}} \right\}.
		\end{align*}
	\end{lemma}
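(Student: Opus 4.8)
The plan is to construct a single explicit "staircase" event $E$ on which the Lamperti-type time change is forced to keep $\xi_{\rho(T)}\le N\varepsilon$, to bound $\mathbb{P}(E)$ below using the chaining estimate \eqref{eq:Markov} together with Lemma \ref{lem:pushlower}, and to check the deterministic inclusion $E\subseteq\{\xi_{\rho(T)}\le N\varepsilon\}$ directly from the definition \eqref{eq:Tdef} of $T$ and the fact that $\xi$ is a subordinator. Throughout we may take $\xi_0\in[-\varepsilon,0]$, the range of starting points relevant to \eqref{eq:Markov} and to the application in Proposition \ref{prop:infinitelower}. First I would apply \eqref{eq:Markov} with the times $\varepsilon t_0,\dots,\varepsilon t_{N-1}$ in place of $t_0,\dots,t_{N-1}$: setting $q_0:=0$ and $q_j:=\varepsilon\sum_{i=1}^{j}t_{N-i}$, so that $q_j-q_{j-1}=\varepsilon t_{N-j}$ and $q_N=\varepsilon\sum_{i=0}^{N-1}t_i$, the event $E:=\{\,\xi_{q_j}\in[(j-1)\varepsilon,j\varepsilon]\text{ for all }j=1,\dots,N\,\}$ satisfies $\mathbb{P}(E)\ge\prod_{j=1}^{N}f_\varepsilon(q_j-q_{j-1})=\prod_{i=0}^{N-1}f_\varepsilon(\varepsilon t_i)$.

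Next I would convert this product into the claimed exponential. Since $t_i\ge t_{\delta,\varepsilon}/\varepsilon$ we have $\varepsilon t_i\ge t_{\delta,\varepsilon}$, so (after enlarging $t_{\delta,\varepsilon}$ if necessary) Lemma \ref{lem:pushlower} applies with a prescribed $\delta'>0$ and gives $f_\varepsilon(\varepsilon t_i)\ge\exp\!\big(-(1+\delta')L(\varepsilon t_i)(\varepsilon t_i)^{1/(1-\theta)}\varepsilon^{-\theta/(1-\theta)}\big)$. A one-line algebra check gives $(\varepsilon t_i)^{1/(1-\theta)}\varepsilon^{-\theta/(1-\theta)}=\varepsilon\,t_i^{1/(1-\theta)}$, and the uniform convergence theorem for slowly varying functions \cite[Section 1.2]{bingham1989regular} gives $L(\varepsilon t_i)\le(1+\delta')L(t_i)$ once $t_{\delta,\varepsilon}$ is large enough. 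Multiplying over $i=0,\dots,N-1$ and choosing $\delta'$ small relative to $\delta$ yields $\mathbb{P}(E)\ge\exp\!\big(-(1+\delta)\varepsilon\sum_{i=0}^{N-1}L(t_i)t_i^{1/(1-\theta)}\big)$, which is the bound claimed in the lemma.

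It then remains to verify $E\subseteq\{\xi_{\rho(T)}\le N\varepsilon\}$, which is where the precise form \eqref{eq:Tdef} of $T$ is used. On $E$, for $s\in[q_{j-1},q_j]$ monotonicity of the subordinator gives $\xi_s\ge\xi_{q_{j-1}}\ge(j-2)\varepsilon$, so
\begin{equation*}
\int_0^{q_N}e^{\alpha\xi_s}\,\mathrm{d}s=\sum_{j=1}^{N}\int_{q_{j-1}}^{q_j}e^{\alpha\xi_s}\,\mathrm{d}s\ge\sum_{j=1}^{N}e^{\alpha(j-2)\varepsilon}(q_j-q_{j-1})=\sum_{j=1}^{N}e^{\alpha(j-2)\varepsilon}\varepsilon\,t_{N-j},
\end{equation*}
and re-indexing by $i=N-j$ turns the last sum into $e^{\alpha(N-2)\varepsilon}\varepsilon\sum_{i=0}^{N-1}e^{-\alpha i\varepsilon}t_i=T$. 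Since the integrand is bounded below by $e^{-\alpha\varepsilon}>0$, the map $u\mapsto\int_0^u e^{\alpha\xi_s}\mathrm{d}s$ is continuous and strictly increasing, so $\int_0^{q_N}e^{\alpha\xi_s}\mathrm{d}s\ge T$ forces $\rho(T)\le q_N$; monotonicity of $\xi$ then gives $\xi_{\rho(T)}\le\xi_{q_N}\le N\varepsilon$ on $E$ (note $\xi_{q_N}\le N\varepsilon$ holds directly because $q_N$ is a deterministic time and $E$ constrains the value there, so no overshoot issue arises). Combining this inclusion with the probability bound from the previous paragraph completes the proof.

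The step I expect to be fiddliest is not any single estimate but the matching of constants and indices: the rescaling of the increments by $\varepsilon$ must be chosen so that the exponent produced by Lemma \ref{lem:pushlower} is precisely $\varepsilon L(t_i)t_i^{1/(1-\theta)}$ while the clock computation reproduces exactly the prefactor $e^{\alpha(N-2)\varepsilon}\varepsilon$ of \eqref{eq:Tdef}, and the finitely many applications of slow variation and of Lemma \ref{lem:pushlower} must be made uniformly in $j$ --- which is exactly what the hypothesis $t_j\ge t_{\delta,\varepsilon}/\varepsilon$ buys. A second point requiring care, already flagged above, is the use of the deterministic times $q_j$ rather than the first-passage times of the levels $j\varepsilon$: this is what prevents overshoot from degrading the lower bound on $\mathbb{P}(E)$.
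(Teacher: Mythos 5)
Your proposal is correct and follows essentially the same route as the paper's own proof: the same staircase event $\{\xi_{q_j}\in[(j-1)\varepsilon,j\varepsilon]\}$ bounded below via \eqref{eq:Markov} and Lemma \ref{lem:pushlower}, the same algebra $(\varepsilon t_i)^{1/(1-\theta)}\varepsilon^{-\theta/(1-\theta)}=\varepsilon t_i^{1/(1-\theta)}$ with the slow-variation adjustment $L(\varepsilon t_i)\le(1+\delta')L(t_i)$, and the same pathwise clock estimate showing $I(q_N)\ge T$, hence $\rho(T)\le q_N$ and $\xi_{\rho(T)}\le N\varepsilon$. No gaps; your explicit handling of the deterministic times $q_j$ (avoiding overshoot) is exactly the mechanism the paper relies on.
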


	\begin{proof}
		Let $t_0,\ldots,t_{N-1} \geq t_{\delta,\varepsilon}/\varepsilon$ 
		be a set of large times, where $t_{\delta,\varepsilon}$ is as in the statement of Lemma \ref{lem:pushlower}. 
		Define $q_j := \varepsilon \sum_{i=1}^j t_{N-i}$ for $0 \leq j \leq N$. 
		Now combining Lemma \ref{lem:pushlower} with \eqref{eq:Markov} 
		for any $x \in [0,\varepsilon]$, 
		\begin{align*} 
			\mathbb{P}_{-x} \left[ \xi_{q_j } \in [(j-1)\varepsilon, j \varepsilon ] ~ 
			\text{for all $j = 1,\ldots,N$} \right]
			\geq \exp \left( -  (1 + \delta) \varepsilon 
			\sum_{ j =0}^{ N-1} L(\varepsilon t_i) t_i^{\frac{1}{1-\theta}}  \right). 
		\end{align*}
		Now on the event 
		$\Gamma := \{\xi_{q_j } \in [(j-1)\varepsilon, j \varepsilon ] ~ \text{for all $j = 1,\ldots,N$}\}$, 
		since $(\xi_t)_{t \geq 0}$ is a nondecreasing stochastic process, 
		we plainly have $\xi_t \geq (j-1)\varepsilon$ for all $t \in [q_j,q_{j+1}]$. 
		In particular, if we set $r := \varepsilon \sum_{ i = 0}^{N-1} t_i = q_N$ we have
		\begin{align*}
			I(r) = \int_0^r e^{\alpha \xi_s} \mathrm{d} s  
			= \sum_{ j = 0}^{N-1} \int_{q_j}^{q_{j+1}} e^{ \alpha \xi_s} \mathrm{d}s 
			\geq \varepsilon\sum_{j =0}^{N-1} t_{N-(j+1)} e^{ \alpha (j-1)\varepsilon } 
			= T(t_0,\ldots,t_{N-1}),
		\end{align*}
		where the final equality above follows from reindexing the sum and using the 
		definition \eqref{eq:Tdef} of $T(t_0,\ldots,t_{N-1})$.

		Since $I$ is the inverse function of $\rho$, and both functions are strictly increasing, 
		on the event $\Gamma$ we have $r \geq \rho(T)$. It follows that on the event $\Gamma$, 
		$\xi_{\rho(T)} \leq \xi_{r} \leq N\varepsilon$. In particular, 
		$\mathbb{P} [ \xi_{\rho(T)} \leq N\varepsilon ] \geq \mathbb{P}[ \Gamma]$. 
		To obtain the result as written, note that given $\delta > 0$, for each $t_i$ 
		is sufficiently large compared to $\varepsilon$, we have 
		$L(\varepsilon t_i) \leq (1+\delta) L(t_i)$. Now replace $\delta$ as necessary. 
	\end{proof}

	Given a fixed $T_0$, it is natural to try to optimise the bound occurring 
	in Lemma \ref{lem:carefulbound} to find values $(t_0,\ldots,t_{N-1})$ minimising 
	\[
   	 \sum_{j = 0 }^{N-1} L(t_i) t_i^{\frac{1}{1-\theta}}
    	\] 
	subject to condition $T(t_0,\ldots,t_{N-1}) = T_0$. In light of Lemma \ref{lem:simple} 
	a natural candidate to consider is taking $t_i := Q e^{ - \alpha i\varepsilon(1-\theta)/\theta}$
	for some sufficiently large $Q$. We use this idea to prove the following result.

	\begin{cor} \label{cor:lowprob}
		Let $p > 0$. For each $\delta > 0$ there exists $T_\delta>0$ 
		such that whenever $T \geq T_\delta$ we have
		\begin{align*} 
			\mathbb{P}\left[ \xi_{\rho(T)} \leq p \right] 
			\geq \exp \left\{ -  (1+\delta) L(T) (T e^{ - \alpha p} )^{\frac{1}{1-\theta}} 
			(1 - e^{ - \alpha p /\theta} )^{-\frac{1}{1-\theta}} 
			(\alpha/\theta)^{\frac{\theta}{1-\theta}} \right\}.
		\end{align*}
	\end{cor}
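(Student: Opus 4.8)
The plan is to feed into Lemma~\ref{lem:carefulbound} the near-optimal schedule of times suggested by the minimiser in Lemma~\ref{lem:simple}. Fix $\delta>0$ and let $\varepsilon>0$ be a small auxiliary parameter, to be sent to $0$ at the very end. Set $N=N(\varepsilon):=\lfloor p/\varepsilon\rfloor$, so that $N\varepsilon\le p$ and $N\varepsilon\to p$ as $\varepsilon\downarrow 0$; this choice guarantees $\{\xi_{\rho(T)}\le N\varepsilon\}\subseteq\{\xi_{\rho(T)}\le p\}$. Mirroring the minimiser $z_i\propto e^{-\alpha i\varepsilon(1-\theta)/\theta}$ of Lemma~\ref{lem:simple}, I would then take
\begin{equation*}
t_i:=Q\,e^{-\alpha\varepsilon i(1-\theta)/\theta},\qquad 0\le i\le N-1,
\end{equation*}
with $Q=Q(T,\varepsilon)>0$ determined by the requirement $T(t_0,\dots,t_{N-1})=T$. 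Since $t_i e^{-\alpha\varepsilon i}=Q\,e^{-\alpha\varepsilon i/\theta}$, the defining identity~\eqref{eq:Tdef} is a single linear equation for $Q$, namely $T=e^{\alpha(N-2)\varepsilon}\varepsilon\,Q\,(1-e^{-\alpha N\varepsilon/\theta})/(1-e^{-\alpha\varepsilon/\theta})$, which has a unique positive solution. As the $t_i$ are decreasing in $i$ and $Q$ is a fixed positive multiple of $T$ (for $\varepsilon$ fixed), the feasibility hypothesis $t_i\ge t_{\delta,\varepsilon}/\varepsilon$ of Lemma~\ref{lem:carefulbound} holds for every $i$ once $T$ is large. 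Lemma~\ref{lem:carefulbound} then gives
\begin{equation*}
\mathbb{P}\big(\xi_{\rho(T)}\le p\big)\ \ge\ \exp\Big\{-(1+\delta)\,\varepsilon\sum_{j=0}^{N-1}L(t_j)\,t_j^{\frac{1}{1-\theta}}\Big\}.
\end{equation*}

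The next step is to estimate the exponent. The ratios $t_j/t_0=e^{-\alpha\varepsilon j(1-\theta)/\theta}$ all lie in the fixed compact interval $[e^{-\alpha p(1-\theta)/\theta},1]$, so by the uniform convergence theorem for slowly varying functions~\cite[Section~1.2]{bingham1989regular} one has $L(t_j)\le(1+\delta)L(t_0)$ for all $j$ once $T$ is large, while $L(t_0)=L(Q)=(1+o(1))L(T)$ since $Q$ is a constant multiple of $T$. To eliminate the dependence on $N$ I would crudely bound the finite geometric sum by the infinite one,
\begin{equation*}
\varepsilon\sum_{j=0}^{N-1}t_j^{\frac{1}{1-\theta}}\ \le\ \varepsilon\,Q^{\frac{1}{1-\theta}}\sum_{j=0}^{\infty}e^{-\alpha\varepsilon j/\theta}\ =\ \frac{\varepsilon\,Q^{\frac{1}{1-\theta}}}{1-e^{-\alpha\varepsilon/\theta}},
\end{equation*}
and then substitute the value of $Q$ found above. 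A routine simplification (driven by $1-\tfrac{1}{1-\theta}=-\tfrac{\theta}{1-\theta}$) yields
\begin{equation*}
\varepsilon\sum_{j=0}^{N-1}t_j^{\frac{1}{1-\theta}}\ \le\ \Big(\frac{1-e^{-\alpha\varepsilon/\theta}}{\varepsilon}\Big)^{\frac{\theta}{1-\theta}}\big(1-e^{-\alpha N\varepsilon/\theta}\big)^{-\frac{1}{1-\theta}}\big(T e^{-\alpha(N-2)\varepsilon}\big)^{\frac{1}{1-\theta}}.
\end{equation*}

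Finally I would let $\varepsilon\downarrow 0$. Because $N\varepsilon\to p$ and $(N-2)\varepsilon\to p$, we have $\big(\tfrac{1-e^{-\alpha\varepsilon/\theta}}{\varepsilon}\big)^{\theta/(1-\theta)}\to(\alpha/\theta)^{\theta/(1-\theta)}$, $\big(1-e^{-\alpha N\varepsilon/\theta}\big)^{-1/(1-\theta)}\to(1-e^{-\alpha p/\theta})^{-1/(1-\theta)}$, and $\big(Te^{-\alpha(N-2)\varepsilon}\big)^{1/(1-\theta)}/\big(Te^{-\alpha p}\big)^{1/(1-\theta)}\to 1$ with the latter ratio not depending on $T$. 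Thus one first fixes $\varepsilon$ small (depending on $\delta$ and $p$) so that the product of these factors is at most $(1+\delta)(\alpha/\theta)^{\theta/(1-\theta)}(1-e^{-\alpha p/\theta})^{-1/(1-\theta)}(Te^{-\alpha p})^{1/(1-\theta)}$ for every $T$, and then takes $T\ge T_\delta$ large enough to validate the feasibility constraint and the slow-variation bound $L(t_j)\le(1+\delta)L(T)$. Collecting the $(1+\delta)$ factors and relabelling $\delta$ produces exactly the claimed inequality.

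I expect the only real difficulty to be organisational rather than conceptual: the quantities $N$, $Q$ and the slowly varying corrections all depend on $\varepsilon$ simultaneously, and the feasibility requirement $t_i\ge t_{\delta,\varepsilon}/\varepsilon$ in Lemma~\ref{lem:carefulbound} forces the ``$\varepsilon$ first, then $T$'' order of quantifiers. It is worth noting that replacing the finite sum by the infinite geometric series is precisely what yields the factor $(1-e^{-\alpha p/\theta})^{-1/(1-\theta)}$ in place of the sharper $(1-e^{-\alpha p/\theta})^{-\theta/(1-\theta)}$ obtainable from the exact truncated minimiser of Lemma~\ref{lem:simple}; the weaker form recorded in the corollary is all that the later arguments require.
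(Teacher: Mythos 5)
Your proposal is correct and follows essentially the same route as the paper's proof: the same schedule $t_i = Q e^{-\alpha\varepsilon i(1-\theta)/\theta}$ fed into Lemma \ref{lem:carefulbound}, the same geometric-sum bound and slow-variation estimates, and the same ``fix $\varepsilon$ small, then take $T$ large'' limit to absorb the correction factors into $(1+\delta)$. The only (harmless) difference is that you take $N=\lfloor p/\varepsilon\rfloor$ and use $\{\xi_{\rho(T)}\le N\varepsilon\}\subseteq\{\xi_{\rho(T)}\le p\}$, whereas the paper simply sets $p=N\varepsilon$; your closing remark about the finite versus infinite geometric sum and the exponent $-1/(1-\theta)$ versus $-\theta/(1-\theta)$ is also accurate.
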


	\begin{proof}
		Fix $\delta>0$ and consider $\varepsilon = \varepsilon(\delta)>0$ and $N = N(p,\varepsilon) \in \mathbb{N}$
		to be determined later.
		In the setting of Lemma \ref{lem:carefulbound}, take 
		$t_i = Q e^{ - \alpha \varepsilon i(1-\theta)/\theta}$. Then
		\begin{align} \label{eq:pintor}
			T_Q := T(t_0,\ldots,t_{N-1}) =Q  e^{ \alpha(N-2)\varepsilon } 
			\frac{\varepsilon}{1-e^{-\alpha \varepsilon/\theta}}( 1 - e^{ - N \alpha \varepsilon/\theta} ) .
		\end{align}
		Now provided $Q e^{ - \alpha \varepsilon N(1-\theta)/\theta} $ 
		is sufficiently large, by Lemma \ref{lem:carefulbound} we have
		\begin{align*}
		\mathbb{P}\left[ \xi_{\rho(T)} \leq N\varepsilon \right] 
			\geq \exp \left\{ - (1+\delta) \varepsilon Q^{ \frac{1}{1-\theta}} \sum_{j = 0}^{N-1} L(Q e^{ - \alpha \varepsilon i(1-\theta)/\theta} ) e^{ - \alpha i \varepsilon /\theta}    \right\} .
		\end{align*}
		By the uniform convergence theorem \cite[Section 1.2]{bingham1989regular}, 
		for all $Q$ sufficiently large,
		\begin{align*}
			L(Q e^{ - \alpha \varepsilon i(1-\theta)/\theta} )  \leq (1 + \delta ) L(Q),
		\end{align*}
		so that
		\begin{align*}
			\mathbb{P}\left[ \xi_{\rho(T)} \leq N\varepsilon \right]
			\geq \exp \left\{ - (1+\delta)^2 L(Q) \varepsilon Q^{ \frac{1}{1-\theta}} 
			\sum_{j = 0}^{N-1} e^{ - \alpha i \varepsilon /\theta}    \right\} .
		\end{align*}
		Since 
		$\sum_{j = 0}^{N-1} e^{ - \alpha i \varepsilon /\theta}  
		\leq  \frac{1}{1-e^{-\alpha\varepsilon/\theta}}$, we obtain
		\begin{align*} 
		\mathbb{P}\left[ \xi_{\rho(T)} \leq N\varepsilon \right]
			\geq \exp \left\{ - (1+\delta)^2 L(Q) \frac{\varepsilon}{1 - e^{ - \alpha \varepsilon/\theta}} 
			Q^{ \frac{1}{1-\theta}}    \right\}.
		\end{align*}
		We now note that $T$ and $Q$ are related to each other through \eqref{eq:pintor}, 
		so that using \eqref{eq:pintor} to express $Q$ in terms of $T$ in the previous inequality, 
		we obtain, for all sufficiently large $T \geq T_0(\delta,\varepsilon,N)$ we have
		\begin{multline*} 
			\mathbb{P}\left[ \xi_{\rho(T)} \leq N\varepsilon \right]
				\\ 
               \geq \exp \left\{ -  (1+\delta)^3 L(T) 
				\left( \frac{\varepsilon}{1-e^{-\alpha \varepsilon/\theta}} \right)^{ - \frac{\theta}{1-\theta}}  (T e^{ - \alpha (N-2)\varepsilon} )^{\frac{1}{1-\theta}} (1 - e^{ - N \alpha \varepsilon/\theta} )^{-\frac{1}{1-\theta}} \right\}.
		\end{multline*}
		Letting $ p = N\varepsilon$, we may write
		\begin{align*} 
		\mathbb{P}\left[ \xi_{\rho(T)} \leq p \right] 
			\geq \exp \left\{ -  (1+\delta)^3 L(T) (T e^{ - \alpha p} )^{\frac{1}{1-\theta}} 
			(1 - e^{ - \alpha p /\theta} )^{-\frac{1}{1-\theta}} 
			(\alpha/\theta)^{\frac{\theta}{1-\theta}} m_{\alpha,\theta}(\varepsilon) \right\},
		\end{align*}
		where
		\begin{align*}
			m_{\alpha,\theta}(\varepsilon) := 
			e^{\frac{2\alpha \varepsilon}{1-\theta}} 
			\left( \frac{\alpha \varepsilon/\theta}{1-e^{-\alpha \varepsilon/\theta}} \right)^{ - \frac{\theta}{1-\theta}}.
		\end{align*}
		which converges to $1$ as $\varepsilon \to 0$. 
		Choose $\varepsilon$ sufficiently small so that $m_{\alpha,\theta}(\varepsilon) \leq 1 + \delta$. 
		Then after replacing $(1+\delta)^4$ with $1+\delta'$ for some suitably chosen $\delta'$, 
		we obtain the result as written.
	\end{proof}

\subsection{Proof of Proposition~\ref{prop:infinitelower}}
	Having the necessary tools at our disposal, in this subsection, we will present the proof of Proposition \ref{prop:infinitelower}. 

    Define the quantity
    \begin{equation}
    d(T,p) := \mathbb{P} [ \xi_{\rho(T)} \leq p ].
    \end{equation}
    We begin with a quick lemma manipulating the antichain property.

    \begin{lemma} \label{lem:previous}
    Let $\Gamma(h-p,\varepsilon,N)$ be the event that the CMJ process underlying our fragmentation process has an antichain $\mathbb{S}$ of particles of heights in $[h-p-\varepsilon,h-p]$ of cardinality satisfying $\# \mathbb{S} \geq N$.     Then the conditional probability, given $\Gamma(h-p,\varepsilon,N)$, of the event $I(t,h)$ that no particle has size $\geq e^{-h}$ at time $t$ satisfies the lower bound
\begin{equation}
\mathbb{P}[ I(t,h) | \Gamma(h-p,\varepsilon,N) ] \leq e^{ - N d(te^{-\alpha(h-p)},p)  }.
\end{equation}
    \end{lemma}

\begin{proof}
Suppose that $\omega$ is a word in the CMJ process with $\mathcal{H}(\omega) \in [h-p-\varepsilon,h-p]$. The probability that this particle has a child of height $\leq h$ at time $t$ is at least the probability that if we apply a spine label to this particle, this subsequent spine particle has height at least $h$ at time $t$. By subtracting a height of $h-p$, and rescaling time by $e^{ - \alpha(h-p)}$, we see that this coincides with the probability that the stochastic process $(\xi_{\rho(s)})_{s \geq 0}$ does not exceed height $p$ by time $te^{ - \alpha(h-p)}$, which has probability $d(Te^{ - \alpha(h-p)}, p)$. 

In short, the probability that a particle of height $\mathcal{H}(\omega) \in [h-p-\varepsilon,h-p]$ existing 
in the process has a child of size at least $e^{-h}$ at time $t$ is at least $d(t e^{ - \alpha(h-p)}, p)$ (\emph{regardless of the moment at which this particle exists}). Note this is even true if the particle is viewed \emph{after} time $t$: if a particle of height in $[h-p-\varepsilon,h-p]$ exists at time $t ' > t$, this immediately implies the existence of a particle of size $\geq e^{ - (h-p)} \geq e^{-h}$ at the (earlier) time $t$. 

Since particles in an antichain give rise to independent future populations, it follows that given that there are at least $N$ particles of heights in $[h-p-\varepsilon,h-p]$ in an antichain, the probability that none of these particles has a descendent of size at least $e^{-h}$ at time $t$ is at most $(1-d(te^{-\alpha(h-p)},p))^N$. Thus,
\begin{equation}
\mathbb{P}[ I(t,h) | \Gamma(h-p,\varepsilon,N) ] \leq (1-d(te^{ - \alpha(h-p)}, p ))^N \leq e^{ - N d(te^{-\alpha(h-p)},p)  },
\end{equation}
 as required.
\end{proof}

	\begin{proof}[Proof of Proposition \ref{prop:infinitelower}]

For $\varepsilon, p = p(\varepsilon)$ to be decided below, let 
    	\begin{align*}
    		A_h &:= \Gamma(h-p,\varepsilon,e^{(1-\varepsilon)h})\\
            &:= \left\{ \exists ~ \text{antichain $\mathbb{S}$} \,:\, 
		\# \mathbb{S} \geq e^{ (1-\varepsilon) h } \text{ and }
	u \in \mathbb{S} \implies \mathcal{H}(u) \in [h-p-\varepsilon,h-p] \right\}
    	\end{align*}
	be the event that there exists an antichain $\mathbb{S}$ in $\mathbb{T}$ 
	of cardinality at least $e^{(1-\varepsilon)h}$ consisting of 
	particles of size between $e^{-(h-p)}$ and $e^{ - (h-p-\varepsilon)}$. 

Now on the one hand, taking $\delta = a = \varepsilon$ in the setting of Theorem~\ref{thm:antichains}, 
	there exists almost surely a random 
	height $h_0(\varepsilon)\geq 0$ such that $A_{h}$ occurs for all $h \geq h_0(\varepsilon)$.

On the other hand, by Lemma \ref{lem:previous}, we have
\begin{align} \label{eq:completa}
\mathbb{P}[ I(t,h) | A_h ] \leq e^{ - F(t,h,p,\varepsilon)},
\end{align}
where
\begin{align*}
F(t,h,p,\varepsilon) = e^{(1-\varepsilon)h} d(te^{ - \alpha(h-p)},p).
\end{align*}
Replacing $\delta$ with $\varepsilon$ in the statement of Corollary \ref{cor:lowprob}, provided $te^{ - \alpha(h-p)} \geq t_\varepsilon$ is sufficiently large, we have
\begin{align} \label{eq:completa2}
F(t,h,p,\varepsilon) \geq e^{ G(t,h,p,\varepsilon)}, 
\end{align}
where
\begin{align*}
G(t,h,p,\varepsilon) := (1-\varepsilon)h - (1+\varepsilon)L(te^{ - \alpha (h-p)}) (t e^{ - \alpha h} )^{ \frac{1}{1-\theta} } (1 - e^{ - \alpha p/\theta})^{ - \frac{1}{1-\theta} } (\alpha/\theta)^{ \frac{\theta}{1-\theta}}.  
\end{align*}

We note that if we choose $p = p(\varepsilon)>0$ to be a constant taken sufficiently large so that $(1 - e^{ - \alpha p /\theta} )^{- \frac{1}{1-\theta}} \leq (1+\varepsilon)$. Then 
\begin{align*}
G(t,h,p,\varepsilon) \geq (1-\varepsilon)h - (1+\varepsilon)^2 L(te^{ - \alpha (h-p)}) (t e^{ - \alpha h} )^{ \frac{1}{1-\theta} } (\alpha/\theta)^{ \frac{\theta}{1-\theta}}.  
\end{align*}

Using \eqref{eq:Gdef}, it now follows that if $t = (1-\delta)F_\theta(h)$ 
we have
\begin{equation*}
    G(t,h,p,\varepsilon) \geq h\left(1-\varepsilon-(1+\varepsilon)^2(1-\delta)^{\frac{1}{1-\theta}}\right) + o(h).
\end{equation*}
Therefore if we set $t = (1-\delta)F_\theta(h)$, then provided $\varepsilon$ is sufficiently small compared to $\delta$ we have $G(t,h,p,\varepsilon) \geq c_{\delta,\varepsilon} h $  for $c_{\delta, \varepsilon}>0$ as $h \to \infty$. 

Next, we note that it follows from \eqref{eq:completa2} that $F(t,h,p,\varepsilon) \geq e^{ c_{\delta,\varepsilon}h}$, and subsequently from \eqref{eq:completa} that
\begin{align*}
\mathbb{P}[ I((1-\delta)F_\theta(h),h) | A_h ] \leq e^{ - e^{ c_{\delta,\varepsilon}h}} \leq \exp\{ - c_{\delta,\varepsilon}' h\}
\end{align*}
for all sufficiently large $h$. 

\color{black}
For $n \in \mathbb{N}, 0 \leq i \leq n-1$ write $h_{n,i} := n+i/n$. 
Consider now  
\begin{align*}
\sum_{n \in \mathbb{N}} \sum_{i=0}^{n-i} e^{ - c_{\alpha,\theta}'\varepsilon h_{n,i}} \leq \sum_{n \in \mathbb{N}} n e^{ - c_{\alpha,\theta}'\varepsilon n} < \infty.
\end{align*}
 It follows from the Borel--Cantelli lemma that, conditionally on $A_h$ occurring for some $h$, only finitely many of the events
 \begin{align*}
 \{ I(t,h_{n,i}) : n \in \mathbb{N}, 0 \leq i \leq n-1 \} 
 \end{align*}
 may occur.
 As noted above, there exists almost surely a random height $h_0(\varepsilon) > 0$ such that $A_h$ occurs for all $h \geq h_0(\varepsilon)$.
 It follows that (unconditionally), only finitely many of the events $ \{ I(t,h_{n,i}) : n \in \mathbb{N}, 0 \leq i \leq n-1 \} $ may occur.

At this point we have proved our claim for all $\varepsilon$ along sequences $\{h_{n,i}\}_{n,i}$. To see that this implies our claim,
fix $\varepsilon>0$ and $h>0$. 
Take $n = \lfloor h \rfloor$ and note that for some $i\leq n$, 
\[
	[h_{n,i} -p -\varepsilon/2, h_{n,i}-p] \subseteq [h-p-\varepsilon, h-p].
\]
The claim follows by invoking the first part of the proof with $\varepsilon/2$ in place of $\varepsilon$.

\end{proof}

\color{black}
\section{Proof of Theorem \ref{thm:ia}} \label{sec:proof}

	To complete the proof of our main results, we first make some general 
	observations about certain functions and their inverses. 
	We say that a function $f(h)$ is \textbf{nice} if and only if it takes the form
	\begin{align*}
		f(h) = e^{\alpha h} h^\beta G(h)
	\end{align*}
	for some $\beta > 0$ and a function $G$ slowly varying at infinity.

	A brief calculation tells us that the inverse function of a nice function takes the form
	\begin{align} \label{eq:ad}
		f^{-1}(t) = \frac{1}{\alpha} 
		\big( \log t - \beta \log \log t + \beta \log \alpha - \log G( \log t ) \big) 
		+ O(\log \log t / \log t).
	\end{align}
	We note that for small real $\delta$, if $f_\delta(h) := (1+\delta)f(h)$,
	we have
	\begin{align} \label{eq:ad2}
		f^{-1}_\delta(t) = f^{-1}(t) - \frac{1}{\alpha} \log(1+\delta) + O(\log \log t/\log t). 
	\end{align}
	Using this, it is easy to see that we have the following lemma.

	\begin{lemma} \label{lem:bothways}
		Let $(M_t)_{t \geq 0}$ be an increasing stochastic 
		process satisfying $M_0 = 0$. For $h \geq 0$, define
		\begin{align*}
			\tau_h := \sup \{ t \geq 0 \,:\, M_t \leq h \}.
		\end{align*}
		Suppose there is a nice function $f(h)$ such that 
		\begin{equation*}
			\lim_{h \to \infty}\tau_h/f(h) =1.
		\end{equation*}
		Then, as $t\to\infty$, $M_t - f^{-1}(t)$ converges to zero almost surely.
	\end{lemma}

We note from using the definition \eqref{eq:Fdef} of $F_\theta(h)$ in \eqref{eq:ad}, we have
\begin{align} \label{eq:gnew}
g(t) := F_\theta^{-1}(t) = \frac{1}{\alpha}( \log t - (1-\theta) \log \log t + h(t)),
\end{align}
where, by virtue of \eqref{eq:Gdef}, $h(t) = (1-\theta)\log \alpha - \log G ( \log t)$ is given by 
\begin{align*}
h(t) = 
\begin{cases}
\log \ell (\log t ) + C(\alpha,\theta) \qquad &:\text{$\theta \in (0,1)$, or $\theta = 0$ finite activity}\\
(1-\theta) \log \alpha - \log G( \log t) \qquad &:\text{$\theta = 0$ with infinite activity},
\end{cases}
\end{align*}
where in the first case above, $C(\alpha,\theta)$ is given by $C(\alpha,\theta) := (1-\theta)\log \alpha - C_3$, with $C_3$ as in \eqref{eq:Gdef}, so that 
\begin{align*}
C(\alpha,\theta) = \log \alpha + (1-\theta)\log(1-\theta) + \log \Gamma(1-\theta),
    \end{align*}
 and in the second, $G(\cdot)$ is as in \eqref{eq:complicated}. This agrees with the statement of Theorem \ref{thm:ia}.

We now prove our main result, Theorem \ref{thm:ia}.

\begin{proof}[Proof of Theorem \ref{thm:ia}]
\color{black}
	Let $\tau_h := \sup \{ t \geq 0: m_t \leq h \}$ and recall that  $F_\theta(h)$ is defined in \eqref{eq:Fdef}. 
	Proposition~\ref{prop:infinitelower} gives the bound
	\begin{equation}\label{eq:banana1}
		\liminf_{h \to \infty} \frac{\tau_h}{F_\theta(h)} \geq 1.
	\end{equation}
	On the other hand, 
	Proposition \ref{prop:infiniteupper} yields that for all $\delta > 0$ 
	there exists $c_\delta > 0$ and $h_\delta > 0$ such that whenever $h \geq h_\delta$ we have
	\begin{align*}
		\mathbb{P} \left[ \tau_h  \geq (1+\delta)F_\theta(h) \right] \leq 2e^{ - c_\delta h}.
	\end{align*}
	Define
		\begin{align*}
			h_i := i\delta \qquad \text{and} \qquad 
			t_i := F_\theta(h_i).
		\end{align*}
		By the Borel--Cantelli lemma, with probability one, 
		$\tau_{h_i} \leq (1+\delta)t_i$ for all sufficiently large $i\in\mathbb N$. 
		Since $h \mapsto \tau_h$ is a nondecreasing stochastic process, 
		\begin{align} \label{eq:apple1}
			h \in (h_k,h_{k+1}] 
			\implies 
			\tau_h \leq \tau_{h_{k+1}} \leq (1+\delta)F_\theta(h_{k+1})
		\end{align}
		for all sufficiently large $k$. 
		Note that for a sufficiently large $c > 0$, we have 
		\begin{align} \label{eq:apple2}
	(1+\delta)F_\theta(h_{k+1}) \leq (1+c\delta) F_\theta(h), 
			\qquad  \qquad  ~\forall h \in (h_k,h_{k+1}].
		\end{align}
		In particular, combining \eqref{eq:apple1} and \eqref{eq:apple2}, 
		it follows that $\tau_h \leq (1 + c \delta) F_\theta(h)$ for all 
		sufficiently large $h$. Since $\delta > 0$ is arbitrary, it follows that
		\begin{equation}\label{eq:banana2}
			\limsup_{h \to \infty} \frac{\tau_h}{F_\theta(h)} \leq 1.
		\end{equation}

		By~\eqref{eq:banana1} and \eqref{eq:banana2}, we are in the setting of Lemma \ref{lem:bothways}
	with $f=F_\theta$, so that $\lim_{t \to \infty} (m_t - g(t)) = 0 $ almost surely, where $g$ is given in \eqref{eq:gnew}. 
That completes the proof.

\end{proof}

\begin{appendix}
\section{Deviation estimates for L\'evy processes}

	Here we present all of the details for the proofs of Theorem~\ref{thm:levybound} and its various constituent lemmas.

\subsection{Completion of proof of Lemma \ref{lem:tailsnew}}
We begin by completing the proof of Lemma \ref{lem:tailsnew}.

	\begin{proof}[Proof of lower bound \eqref{eq:tailsnew2} in Lemma~\ref{lem:tailsnew}]
We will continue to use the notation in the proof of the upper bound in Lemma \ref{lem:tailsnew} given in the main body.\label{sec:tailsnewproof}

        By way of preparation for a use of Berry--Esseen bound below, we note here that \color{black} it is a brief calculation to show that the expectation and variance of $\xi_t$ under $\mathbb{P}^{(q)}$ are given by 
	\begin{align} \label{eq:center}
		\mathbb{E}^{(q)}[\xi_t] = t \Phi'(q) \qquad \text{and} \qquad \mathbb{E}^{(q)}\left[ (\xi_t - t \Phi'(q))^2 \right] = t \Phi''(q).
	\end{align}

We have 
\begin{align} \label{eq:gto}
\mathbb{P}[ xt \leq \xi_t \leq xt + \sqrt{t \Phi''(q_x)} ] &= e^{ - t \Phi(q_x)} \mathbb{E}^{(q_x)}\left[ e^{ q_x \xi_t } \mathrm{1}\{ xt \leq \xi_t \leq xt + \sqrt{t \Phi''(q_x)}  \} \right] \nonumber \\
&\geq e^{ - t \Phi(q_x) + t q_x x } \mathbb{P}^{(q_x)}[ xt \leq \xi_t \leq xt + \sqrt{t \Phi''(q_x)} ].
\end{align}
\color{black}
	By \eqref{eq:center}, the random variable $Z_t := (\xi_t - tx)/ \sqrt{\Phi''(q_x)t }$ has zero expectation and unit variance. 
	In fact, under $\mathbb{P}^{(q_x)}$, $Z_t$ converges in distribution to a standard Gaussian random variable as $t \to \infty$. 
	Better yet, using the Berry--Esseen Theorem to obtain the first inequality below, 
	and then Jensen's inequality to obtain the second, for any $\alpha < \beta$ in $\mathbb{R}$, we have 
	\begin{align*}
		\left| \mathbb{P}^{(q_x)} \left[\frac{\xi_t - tx }{ \sqrt{\Phi''(q_x)t } } \in [\alpha,\beta] \right] 
			- \frac{1}{\sqrt{2\pi}} \int_\alpha^\beta e^{-u^2/2} \mathrm{d} u \right| 
		&\leq \frac{C'}{\sqrt{t}} 
		\frac{ \mathbb{E}^{(q_x)}\left[ |\xi_1 - x|^3 \right] }{\mathbb{E}^{(q_x)}\left[ (\xi_1 - x)^2 \right]^{3/2} }\\
		&\leq \frac{C'}{\sqrt{t}} 
		\frac{ \mathbb{E}^{(q_x)}\left[ (\xi_1 - x)^4 \right]^{3/4} }{\mathbb{E}^{(q_x)}\left[ (\xi_1 - x)^2 \right]^{3/2} },
	\end{align*}
	where $C'>0$ is a universal constant. We use Jensen's inequality again, to observe that 
	$$
		\mathbb{E}^{(q_x)}\left[ (\xi_1 - x)^4 \right] \geq \mathbb{E}^{(q_x)}\left[ (\xi_1 - x)^2\right].
	$$
	Since $y^{3/4}\leq y$ for $y\geq 1$, we have  
	\begin{align*}
		\left| \mathbb{P}^{(q_x)} \left[\frac{\xi_t - tx }{ \sqrt{\Phi''(q_x)t } } \in [\alpha,\beta] \right] 
		- \frac{1}{\sqrt{2\pi}} \int_\alpha^\beta e^{-u^2/2} \mathrm{d} u \right| 
		&\leq \frac{C'}{\sqrt{t}} \frac{ \mathbb{E}^{(q_x)}[ (\xi_1 - x)^4 ] }{\mathbb{E}^{(q_x)}[ (\xi_1 - x)^2 ]^{2} }.
	\end{align*}
    Using \eqref{eq:gto} in conjunction with the previous display we have 
	\begin{align} \label{eq:chernfirst3}
		\mathbb{P}[xt\leq \xi_t \leq tx+ \sqrt{t\Phi''(q_x)}] &\geq e^{ - tR(q_x) - \sqrt{t}  q_x \sqrt{\Phi''(q_x) }}  
		\left\{ C'' - \frac{C'}{\sqrt{t}} \frac{ \mathbb{E}^{(q_x)}[ (\xi_1 - x)^4 ] }{\mathbb{E}^{(q_x)}[ (\xi_1 - x)^2 ]^2   } \right\},
	\end{align}
	where $C'' := \int_{0}^1 e^{ - u^2/2}\mathrm{d}u/\sqrt{2\pi}$. 
	Note that the constant $C$ in \eqref{eq:tailscondition} can be taken in a way that Equation \eqref{eq:tailscondition} implies
	\begin{align*} 
		\frac{C'}{\sqrt{t}} \frac{ \mathbb{E}^{(q_x)}[ (\xi_1 - x)^4 ] }{\mathbb{E}^{(q_x)}[ (\xi_1 - x)^2 ]^2   } \leq \frac{C''}{2}. 
	\end{align*}
	Plugging the previous inequality into \eqref{eq:chernfirst3} we have for all $x$ such that $t\geq r(x)$
	\begin{align*} 
		\mathbb{P}[xt\leq \xi_t \leq tx+ \sqrt{t\Phi''(q_x)}] &\geq e^{ - tR(q_x) - \sqrt{t} q_x \sqrt{\Phi''(q_x)} - c},
	\end{align*}
	where $c := -\log(C''/2)$, completing the proof. 
\end{proof}

\subsection{Complete proof for Lemma \ref{lem:Phiasnew}}
\label{sec:Phiasnewproof}
\begin{proof}[Proof of Lemma \ref{lem:Phiasnew}]
	We begin by proving \eqref{eq:Phiasnew}. \color{black} First we prove an upper bound for $\Phi(q)$. By \eqref{eq:Phidef} we have 
		\begin{equation*}
			\Phi(q) \leq \int_{\MassP} \left( 1 - s_1^{q+1} \right)\nu (\rm d \mathbf{s}).
		\end{equation*}
		To analyze the integral on the right-hand side just integrate by parts to get
		\begin{equation*}
			\int_{\MassP}\left(1-s_1^{q+1}\right) \nu(\mathrm{d} \mathbf{s}) = 
			\int_0^1 (q+1) t^{q} \nu(s_1 < t) \mathrm{d} t =
			\int_0^1 (q+1) (1-t)^q t^{-\theta}\ell(1/t) \mathrm{d} t,
		\end{equation*}
        where we used the behaviour \eqref{eq:1:crumbling}. 
		We can write the last integral as
		\begin{equation*}
			q^{\theta} \frac{q+1}{q} \ell(q)
			\int_0^q \left(1-\frac sq \right)^q s^{-\theta} \frac{\ell(q/s)}{\ell(q)} \mathrm{d} s.
		\end{equation*}
		By an appeal to Potter bounds~\cite[Theorem 1.5.6 (i)]{bingham1989regular} 
		for sufficiently large $q$,
		\begin{equation*}
			\ell(q/s) / \ell(q) \leq 2 s^{ -(1-\theta)/2}
		\end{equation*}
		for all $s \in (0,1)$. Using the simple estimate $1-x\leq e^{-x}$ for $x \in (0,1)$, we can appeal to 
		the Dominated Convergence Theorem and slow variation of $\ell$ 
		to show that the integral in question tends to $\Gamma(1-\theta)$ as $q \to \infty$.
		This concludes a proof of the upper bound for $\Phi(q)$.

		As for the lower bound for $\Phi(q)$, we observe that
		\begin{equation*}
			\int_{\MassP} \left(1 - \sum_{i \geq 1} s_i^{q+1} \right) \nu(\mathrm{d} \mathbf{s}) \geq  
			\int_{\MassP} \left(1 - s_1^{q+1} - s_1^{q/2}(1-s_1)^{q/2+1}\right) \nu(\mathrm{d} \mathbf{s}),
		\end{equation*}
		using the fact that $s_2^{q+1} + s_3^{q+1} + \ldots \leq s_1^{q/2}(1 - s_1)^{q/2+1}$ whenever 
		$\sum_{ i \geq 1} s_i \leq 1$.
		We have already checked that the integral of $1-s_1^{q+1}$ has the desired asymptotic. 
		To conclude the proof of \eqref{eq:Phiasnew} it suffices to check that 
		the integral 
		\begin{equation*}
			\int_{\MassP} s_1^{q/2}(1-s_1)^{q/2+1} \nu(\mathrm{ d} \mathbf{s}) =
			\frac q2\int_0^1 (1-t)^{q/2-1}t^{q/2-\theta}(2t-1)\ell(1/t) \mathrm{d} t 
		\end{equation*}
		converges to zero as $q \to \infty$. This is indeed true, since by yet another appeal to 
		Potter bounds~\cite[Theorem 1.5.6 (ii)]{bingham1989regular},  
		$\ell(t) \leq \const /t$ for some constant $\const>0$ and all $t \in (0,1)$. 
		Thus, the last integral is bounded above via
		$Cq \mathrm{B}(q/2+\theta +1, q/2)$, where $\mathrm{B}(\cdot, \cdot)$ is the beta function. Using Stirling's approximation one 
        gets the bound
        \begin{equation}\label{eq:fancy}
        \int_{\MassP} s_1^{q/2}(1-s_1)^{q/2+1} \nu(\mathrm{ d} \mathbf{s}) \leq C_1 q^{-1/2} 2^{-q+\theta}
        \end{equation}
        for some $C_1 \geq C$.
        Now \eqref{eq:Phiasnew} follows.\medskip

We turn to the proof of \eqref{eq:Phihighernew}. 
First we observe that by differentiating \eqref{eq:Phidef} with respect to $q$, we see that for all $j \geq 1$, 
\begin{align} \label{eq:der}
\Phi^{(j)}(q) = - \int_{\MassP} \sum_{i \geq 1} s_i^{q+1} \log(s_i)^{j} \nu(\mathrm{d}\mathbf{s}).
\end{align}
Accordingly, $(-1)^{j-1}\Phi^{(j)}(q)$ is a nonnegative and monotone decreasing function of $q$. 

Karamata's monotone density theorem \cite[Theorem 1.7.2]{bingham1989regular} states that if $U$ is a monotone increasing or decreasing regularly varying function with index $\rho \in \mathbb{R} - \{0\}$ so that $U(x) = x^\rho L(x)$ for some slowly varying function $L:(0,\infty) \to (0,\infty)$, then its derivative takes the form $U'(x) \sim \rho x^{\rho-1}L(x)$. 
In the case where $\theta \neq 0$, Equation \eqref{eq:Phihighernew} now immediately follows from \eqref{eq:Phiasnew} and inductively applying Karamata's monotone density theorem \cite[Theorem 1.7.2]{bingham1989regular}.

As for the proof of \eqref{eq:Phihighernew} in the case $\theta = 0$, we are not at liberty to use Karamata's monotone density theorem, and must examine $\Phi'(q)$ directly. 
		We will use a simple estimate:
		for all $x \in (0,1)$,
		$\log(1/x) \leq 1/(ex)$. 
		Thus, since $\sum_{i\geq 2}s_i = 1-s_1$ and $\max_{i \geq 2}s_i \leq 1/2$ we can write
		\begin{equation*}
			\sum_{i \geq 2} s_i^{q+1}\log(1/s_i)^j
			\leq \const \sum_{i \geq 2} s_i^{q-j}
			\leq \const 2^{j} s_1^{q/2}(1-s_1)^{q/2}.
		\end{equation*}
		Therefore,
		\begin{equation*}
			\left| (-1)^{j-1}\Phi^{(j)}(q) - \int_{\MassP} s_1^{q+1}\log(1/s_1)^j  \nu(\mathrm{d} \mathbf{s}) \right|
			\leq \const 2^{j}\int_{\MassP}
			s_1^{q/2}(1-s_1)^{q/2} \nu(\mathrm{d} \mathbf{s}).
		\end{equation*}
        By an appeal to~\eqref{eq:fancy} we thus conclude that
		\begin{equation*}
			(-1)^{j-1}\Phi^{(j)}(q) - \int_{\MassP} s_1^{q+1}\log(1/s_1)^j  \nu(\mathrm{d} \mathbf{s}) 
			= O \left(q^{-1/2}2^{-q/2} \right).
		\end{equation*}
		Take $g(x) = x^q \log(1/x)^{j-1} ((q+1) \log(1/x)-j)$, the derivative of the function $x \mapsto x^{q+1}\log(1/x)^j$ and write
		\begin{equation*}
			\int_{\MassP} s_1^{q+1} \log(1/s_1) \nu(\mathrm{d} \mathbf{s}) =
			\int_0^1 g(t) \nu(s_1 \leq t) \mathrm{d} t =
			\int_0^1 g(1-t) \ell(1/t) \mathrm{d} t.
		\end{equation*}
		As previously, write the last integral as
		\begin{equation*}
			\frac{1}{q}
			\int_0^q \left(1-\frac sq \right)^q \log\left(\left(1-\frac sq \right)^{-1}\right)^{j-1}
			\left( (q+1)\log\left(\left(1-\frac sq \right)^{-1}\right) -j \right) \ell(q/s) \mathrm{d} s.
		\end{equation*}
		Using the expansions 
		\[
			\left(1-\frac{s}{q}\right)^q = e^{-s}+\frac{e^{-s}s^2}{2q(1+\epsilon(s,q))}	
        \]
		with $\epsilon(s,q)\leq \const/ q^2$, and
		\begin{equation*}
			(q+1)\log\left(\left(1-\frac sq \right)^{-1}\right) = s + \epsilon_0(s,q)
		\end{equation*}
		with $\epsilon_0(s,q) \leq s/q + (q+1) s^2/(q^2-sq)$, 
		together with an appeal to Potter bounds 
		we infer that
		\begin{equation*}
			\int_{\MassP} s_1^{q+1} \log(1/s_1)^j \nu(\mathrm{d} \mathbf{s}) 
			= q^{-j}\int_0^\infty e^{-s}s^{j-1}(s-j) \ell(q/s)\mathrm{d}s + o\left( q^{-j-1/2} \right).
		\end{equation*}
		As a result the right-hand side is an approximation of $(-1)^{j-1}\Phi^j(q)$ up to order $o\left( q^{-j-1/2} \right)$.
        In the finite activity case, $\ell(q) \to \lambda$ as $q \to \infty$ and
        it follows that the function 
        \begin{equation*}
            o_j(q)=\int_0^\infty e^{-s}s^{j-1}(s-j) \ell(q/s)\mathrm{d}s
        \end{equation*}
        is slowly varying an vanishing at infinity.
		In the infinite activity case, 
        using the regularity condition imposed on $\ell$, we can write
		\begin{align*}
			\int_0^\infty e^{-s}s^{j-1}(s-j) \ell(q/s)\mathrm{d}s & \sim
			\int_0^\infty e^{-s}s^{j-1}(s-j) \int_0^{q/s}\frac{1}{y} \ell_0(y) \mathrm{d}y \: \mathrm{d}s \\ 
			& = \int_0^\infty e^{-s}s^{j-1} \ell_0(q/s) \mathrm{d}s \sim \Gamma(j) \ell_0(q).
		\end{align*}
		It follows that in the case $\theta = 0$ we have $(-1)^{j-1}\Phi^{(j)}(q) \sim q^{-j}\Gamma(j)\ell_0(q)$. 

\end{proof}

\color{black}
\subsection{Proof of Lemma \ref{lem:condition}}

\begin{proof}[Proof of Lemma \ref{lem:condition}]
It is a brief calculation differentiating the Laplace exponent of the L\'evy process to verify that
\begin{align*}
\frac{ \mathbb{E}^{(q_x)}[ (\xi_1 - x)^4 ] }{\mathbb{E}^{(q_x)}[ (\xi_1 - x)^2 ]^2   } = 3 + \frac{\Phi^{(4)}(q_x)}{ \Phi^{(2)}(q_x)^2}.
\end{align*} 
By virtue of \eqref{eq:Phihighernew}, as $q \to \infty$, we have 
\begin{align*}
\frac{\Phi^{(4)}(q)}{ \Phi^{(2)}(q)^2} \sim \tilde{C}(\theta) q^{ - \theta} L_1(q)^{-1},
\end{align*}
where $\Phi^{(2)}$ and $\Phi^{(4)}$ are the second and fourth derivative of $\Phi$ respectively
and $\tilde{C}(\theta) = C_4(\theta)/C_2(\theta)^2 > 0$. 

Since $q_x \to \infty$, as $x \downarrow 0$, we have  
\begin{align} \label{eq:ado}
r(x) \sim C \left(3 + \tilde{C}(\theta)q_x^{-\theta} L_1(q_x)^{-1} \right)^2. 
\end{align}

Now whenever $\theta > 0$, we see from \eqref{eq:ado} that $r(x)$ is bounded for all $x \leq x_0$, so that the result clearly holds with $\hat{L}$ taken to equal some sufficiently large constant.

When $\theta = 0$, however, using \eqref{eq:ado} and the definition \eqref{eq:Ldef}, we have
\begin{align*}
r(x) \sim C \left(3 + C'(0) \ell_0(q_x)^{-1} \right)^2,
\end{align*}
where $\hat{L}$ is a slowly varying function that may tend to infinity as $x \to 0$ (this happens in the case where $\ell_0(q) \to 0$ as $q \to \infty$). 

Either way, there exists some $x_0>0$ such that for all $0 < x \leq x_0$, $r(x) = \hat{L}(1/x)$ for a slowly varying function of $x$.
\end{proof}

\color{Mahogany}
\subsection{Proof of Lemma \ref{lem:tech}}
\color{black}

	\begin{proof}[Proof of Lemma \ref{lem:tech}]
		Let 
		\begin{equation*}
			H_n :=  \inf\left\{  \varepsilon  
			\sum_{i=0}^{\infty} z_i^{\frac{1}{1-\theta}}f_n(z_i) \,:\, 
			(z_0,z_1,\ldots)\in[0,\infty)^{\mathbb{N}}, \: 
			\varepsilon \sum_{i=0}^{\infty} z_ie^{ - \alpha i \varepsilon} \geq 1 \right\}.
		\end{equation*}
		First we claim that there exists $A > 1$ and $n_0\in\mathbb N$ such that for all integers $ n \geq n_0$, 
		we may restrict our attention
		to the case where the infimum is attained by 
		sequences with coordinates $z_i$ taking values in a compact interval 
		$[0,A]$. To see this, note that since, for $x \geq 1$, 
		$f_n(x) \geq C x^{- \delta_n}$ for some sequence 
		$(\delta_n)_{n \geq 1}$ such that $\delta_n \to 0$, it follows that 
		for all sufficiently large $n$, 
		$z^{\frac{1}{1-\theta}}f_n(z) \geq z^{ \frac{1}{2} \frac{1}{1-\theta} }$. 
		In particular, if some $z_i > A$, we have 
		$\varepsilon \sum_{i=0}^\infty z_i^{\frac{1}{1-\theta}} f_n(z_i) > \
		\varepsilon A^{\frac{1}{2}\frac{1}{1-\theta}}$. Thus,
		\begin{align*}
			H_n = I_n(A) \wedge \left( \varepsilon A^{\frac{1}{2}\frac{1}{1-\theta}} \right),
		\end{align*}
		where 
		\begin{align} \label{eq:pickap}
			 I_n(A) := 
			\inf\left\{  \varepsilon  
			\sum_{i=0}^{\infty} z_i^{\frac{1}{1-\theta}}f_n(z_i) 
			\,:\,(z_0,z_1,\ldots)\in[0,A]^{\mathbb{N}}, \: 
			\varepsilon \sum_{i=0}^{\infty} z_ie^{ - \alpha i \varepsilon} 
			\geq 1\right\} .
		\end{align}
		Choose $A>0$ sufficiently large so that 
		$\varepsilon A^{\frac{1}{2}\frac{1}{1-\theta}} > C(\theta,\alpha,\varepsilon)$. 
		It remains to examine $I_n:=I_n(A)$ for such $A$. 
		Now let $a \in (0,1)$ be small, and consider the contribution to the sum 
		$\varepsilon \sum_{i=0}^\infty z_i e^{ - \alpha i \varepsilon }$ 
		from terms with $z_i < a$ and from terms with $z_i \geq a$. 
		Plainly we have 
		\begin{equation*}
			\varepsilon \sum_{i=0}^\infty 
			z_i \mathrm{1}_{\{z_i < a\}}e^{ - \alpha i \varepsilon } 
			\leq \varepsilon a \sum_{i=0}^\infty e^{ - \alpha i \varepsilon } 
			= \tilde{C}a,
		\end{equation*}
		where $\tilde{C}:=\varepsilon  (1- e^{ - \alpha \varepsilon })^{-1}$. 
		In particular, it follows that for small $a$,
		\begin{align*}
			\varepsilon \sum_{i \geq 0} \mathrm{1}_{\{z_i \geq a\}} 
			z_i e^{-\alpha i \varepsilon} 
			\geq 1 - \tilde{C}a.
		\end{align*}
		Thus, picking up from \eqref{eq:pickap}, we have
		\begin{multline} \label{eq:pickap2}
			I_n \geq  
			\inf\left\{  \varepsilon  \sum_{i=0}^{\infty} z_i^{\frac{1}{1-\theta}}f_n(z_i) 
			: (z_0,z_1,\ldots)\in[0,A]^{\mathbb{N}}, \:
			\varepsilon 
			\sum_{i \geq 0} \mathrm{1}_{\{z_i \geq a\}} \frac{z_i}{e^{  \alpha i \varepsilon}}
			\geq 1 - \tilde{C}a  \right\}  \\
			\geq  \inf\left\{  \varepsilon  
			\sum_{i=0}^{\infty}\mathrm{1}_{\{z_i \geq a\}}z_i^{\frac{1}{1-\theta}}f_n(z_i) 
			:(z_0,z_1,\ldots)\in[0,A]^{\mathbb{N}}, \:
			\varepsilon 
			\sum_{i \geq 0} \mathrm{1}_{\{z_i \geq a\}} \frac{z_i}{e^{ \alpha i \varepsilon} }
			\geq 1 - \tilde{C}a  \right\}.
		\end{multline}
		Moreover, since the sequence of functions $(f_n)_{n \geq 1}$ converges 
		uniformly to $1$ on the compact set $[a,A]$, for all $a\in (0,1)$ there exists 
		$n_0(a)\in\mathbb N$ such that whenever $n \geq n_0(a)$, 
		$f_n(x) \geq 1 - a$ for all $x \in [a,A]$. 
		Using this fact in \eqref{eq:pickap2} to obtain the first inequality below, 
		and then rescaling the $z_i$  with $z_i/(1 - \tilde{C}a)$, 
		to obtain the following equality, for all $n \geq n_0(a)$ we have 
		\begin{equation} \label{eq:pickap3}
        \begin{split}
		&\frac{I_n}{1-a}\geq 
			\inf\left\{  \varepsilon  
				\sum_{i=0}^{\infty}\mathrm{1}_{\{z_i \geq a\}}
			z_i^{\frac{1}{1-\theta}}
			\,:\, (z_0,z_1,\ldots)\in[0,A]^{\mathbb{N}}, \: 
			\varepsilon \sum_{i \geq 0} 
			\mathrm{1}_{\{z_i \geq a\}} \frac{z_i}{e^{ \alpha i \varepsilon} }
			\geq 1 - \tilde{C}a  \right\} \\
			&\geq (1 - \tilde{C}a)^{\frac{1}{1-\theta}} 
			\inf\left\{  \varepsilon  
				\sum_{i=0}^{\infty}\mathrm{1}_{\left\{z_i \geq a\right\}}z_i^{\frac{1}{1-\theta}}
			\,:\, (z_0,z_1,\ldots)\in[0,A]^{\mathbb{N}}, \:
			   \varepsilon 
			   \sum_{i \geq 0} \mathrm{1}_{\left\{z_i \geq a \right\}} 
		   \frac{z_i}{e^{  \alpha i \varepsilon}} \geq 1 \right\}.
		\end{split}
        \end{equation}
		Finally, we note that
		\begin{align*} 
		\inf&\left\{  \varepsilon  \sum_{i=0}^{\infty}\mathrm{1}_{\{z_i \geq a\}} z_i^{\frac{1}{1-\theta}}\,:\, 
		(z_0,z_1,\ldots)\in[0,A]^{\mathbb{N}}  \mbox{ with }   
	\varepsilon \sum_{i \geq 0} \mathrm{1}_{\{z_i \geq a\}} \frac{z_i}{e^{ \alpha i \varepsilon}} \geq 1 \right\}  \\
		=&  \inf\left\{  \varepsilon  \sum_{i=0}^{\infty}z_i^{\frac{1}{1-\theta}}\,:\, 
		(z_0,z_1,\ldots)\in\left(\{0\}\cup[a,A]\right)^{\mathbb{N}}  \mbox{ with }   \varepsilon 
		\sum_{i \geq 0} \frac{z_i}{e^{ \alpha i \varepsilon}} \geq 1   \right\}  \\
		\geq& \inf\left\{  \varepsilon  \sum_{i=0}^{\infty}z_i^{\frac{1}{1-\theta}}\,:\,  
(z_0,z_1,\ldots)\in[0,\infty)^{\mathbb{N}}  \mbox{ with }    
	\varepsilon \sum_{i \geq 0} \frac{z_i}{e^{  \alpha i \varepsilon}} \geq 1  \right\}  = C(\alpha,\theta,\varepsilon).
		\end{align*}
		Thus, by \eqref{eq:pickap3} and the previous inequality, 
		given $a \in (0,1)$, for all $n \geq n_0(a)$, we have
		\begin{align*}
			I_n \geq (1-a)(1 - \tilde{C}a)^{\frac{1}{1-\theta}} C(\theta,\alpha,\varepsilon).
		\end{align*}
		Since $a\in (0,1)$ was arbitrary, that completes the proof.
	\end{proof}
    
\end{appendix}

\section*{Acknowledgments}
The authors are extremely grateful to Jean Bertoin for several comments, in particular for pointing out an error in an earlier version regarding our use of the limit theory of Crump--Mode--Jagers branching processes. We also thank Victor Rivero for pointing out the link between tails of L\'evy processes and their exponential functionals.

The authors also express their gratitude to an anonymous referee for highlighting an error in our proof of Proposition \ref{prop:heightlower}, as well as for pointing out an oversight of a technical condition in the statement of \eqref{eq:bertoin}.

Part of this work was carried out while Joscha Prochno was on a research semester at Graz University of Technology. He is grateful for the excellent working conditions.

\section*{Funding}
Piotr Dyszewski was partially supported by the National Science Centre, Poland (Sonata, grant
number 2020/39/D/ST1/00258). 

Sandra Palau acknowledges support from the Royal Society, UK as a Newton International Fellow Alumnus (grant AL231019). 

Joscha Prochno is supported by the German Research Foundation (DFG) under project 516672205.

\bibliography{selfsimilar} 
\bibliographystyle{abbrv}

\end{document}